\documentclass[12pt,a4paper]{amsart}

\usepackage{amsmath,amssymb}
\usepackage{enumerate}
\usepackage[margin=2cm]{geometry}
\usepackage{multirow}
\usepackage{url}

\newtheorem{theorem}{Theorem}
\numberwithin{theorem}{section}
\newtheorem{lemma}[theorem]{Lemma}
\newtheorem{proposition}[theorem]{Proposition}
\newtheorem{corollary}[theorem]{Corollary}

\theoremstyle{definition}
\newtheorem{definition}[theorem]{Definition}
\newtheorem{example}[theorem]{Example}

\newtheorem{remark}[theorem]{Remark}

\title{On the rational solutions of generalized Abel equations}

\author{L.A. Calder\'on*}
\address{Departamento de Matematicas, Universidad Castilla-La Mancha, 13071 Ciudad Real, Spain}
\email{luisangel.calderon@uclm.es}
\author{I. Ojeda}
\address{Departamento de Matematicas, Universidad de Extremadura, 06071 Badajoz, Spain}
\email{ojedamc@unex.es}

\thanks{* Corresponding author}

\begin{document}
	
	\begin{abstract}
		We study nonconstant rational solutions of
		\[
		x'=A_3(t)x^{n_3}+A_2(t)x^{n_2}+A_1(t)x^{n_1},
		\qquad 1<n_1<n_2<n_3,
		\]
		with \(A_i\in\Bbbk[t]\), \(\Bbbk\in\{\mathbb R,\mathbb C\}\). We prove that every such solution is of the form \(x=1/p(t)\), and use the Newton--Puiseux polygon at infinity to restrict the possible degrees of \(p\). Under a nondegeneracy hypothesis, the associated edge polynomials yield explicit bounds for the total number \(\mathcal S\) of rational solutions. In particular, \(\mathcal S\le (n_2-1)+2(n_3-1)\) over \(\mathbb C\), while over \(\mathbb R\) one has \(\mathcal S\le 12\), with sharper parity-dependent estimates in the real case.
	\end{abstract}
	
	\subjclass[2020]{34C25, 12E10, 14H20}
	
	\keywords{generalized Abel equation; rational solutions; Newton diagram; Puiseux series; invariant algebraic curves; upper bounds.}
	
	\thanks{
		Both authors are partially supported by the project PID2023-151974NB-I00 funded by MICIU/
		AEI/10.13039/ 501100011033/FEDER, UE. This work has been partially funded by the Junta de Extremadura through projects GR24042 (L-A.C.) and GR24068 (I.O.), partially funded by the European Regional Development Fund (ERDF) ``A way to make Europe''.}
	
	\maketitle
	
	\section{Introduction}
	
	The Abel differential equation
	\begin{equation}\label{intro1}
		x' = A(t)x^3 + B(t)x^2 + C(t)x,\qquad A(t),\,B(t),\,C(t)\in \mathcal{C}(\mathbb{R}),
	\end{equation}
	and its polynomial generalization
	\begin{equation}\label{intro2}
		x' = \sum_{i=1}^n A_i(t)x^i,\qquad A_i(t)\in \mathcal{C}(\mathbb{R}),\ i=1,\dots,n,\ n>3,
	\end{equation}
	have been studied extensively. Their relevance stems from their connection with Hilbert's 16th Problem, their usefulness in modeling real-world phenomena \cite{BernadeteNoonburgPollina2008}, and their intrinsic mathematical interest \cite{Gasull2021}.
	
	In the qualitative theory of differential equations, two central problems associated with Abel equations are the Smale--Pugh problem \cite{Smale1998} and the Poincar\'e center--focus problem \cite{BriskinFrancoiseYomdin1999-1,BriskinFrancoiseYomdin1999-2}. The Smale--Pugh problem concerns upper bounds for the number of limit cycles, that is, isolated periodic solutions, when the coefficients are periodic in \(t\). The Poincar\'e center--focus problem asks whether all solutions in a neighborhood of the trivial solution \(x(t)\equiv0\) are periodic; in this case, the equation is said to have a center at \(x\equiv0\).
	
	A natural question concerns the number of solutions of a prescribed type that an Abel equation can admit. Polynomial solutions provide a first natural class in this direction. For instance, Gin\'e et al.\ \cite{GineGrauLlibre2013} proved that generalized Abel equations of degree \(n\) in \(x\) with polynomial coefficients have at most \(n\) polynomial solutions. Related works include, for example, \cite{CimaGasullManosas2017,GasullTorregrosaZhang2016,LiuLiWangWu2018,LlibreValls2018,QianShenYang2021}.
	
	It is therefore natural to consider the larger class of rational solutions. Besides extending the polynomial case, rational solutions are particularly relevant because they admit a geometric interpretation in terms of invariant algebraic curves. More precisely, for the associated planar system
	\[
	\left\{
	\begin{array}{lll}
		\dot t & = & 1\\ 
		\dot x & = & A(t)x^3+B(t)x^2+C(t)x,
	\end{array}
	\right.
	\]
	a rational solution \(x(t)=p(t)/q(t)\) gives rise to the invariant algebraic curve \(q(t)x-p(t)=0\), which is of degree one in \(x\). Thus, studying rational solutions can be viewed as studying invariant curves of degree one in the variable \(x\).
	
	This geometric viewpoint also provides a link with the Smale--Pugh problem. Indeed, invariant curves arising from rational solutions can be used to identify special periodic solutions and to develop techniques for determining or bounding the number of limit cycles of the equation; see \cite{BravoCalderonFernandez2021,HuangLiangLlibre2018}.
	
	Several works have been devoted to studying and bounding the number of rational solutions of Abel equations. For \eqref{intro1} with \(C(t)\equiv0\), the case where \(A(t)\) and \(B(t)\) are polynomials with real or complex coefficients is treated in \cite{BravoCalderonFernandezOjeda2022,LlibreValls2021}, while the case of real trigonometric polynomial coefficients is studied in \cite{BravoCalderonOjeda2023,Valls2022}. For \eqref{intro1} with \(C(t)\not\equiv0\), both the polynomial and trigonometric cases are considered in \cite{Calderon2025}.
	
	A further natural extension concerns generalized Abel equations of the form
	\begin{equation}\label{intro3}
		x' = A(t)x^p + B(t)x^q + C(t)x,
	\end{equation}
	where \(A(t)\), \(B(t)\), and \(C(t)\) are real polynomials and \(p>q>1\). The case \(C(t)\equiv0\) was considered in \cite{DuZhao2026}, whereas the case \(C(t)\not\equiv0\) was studied in \cite{ZengZhaoUpperBound}.
	
	Motivated by this line of research, in this paper we study the three-term generalized Abel equation
	\begin{equation}\label{ecu:main}
		x' = A_3(t)x^{n_3}+A_2(t)x^{n_2}+A_1(t)x^{n_1},
		\qquad 1<n_1<n_2<n_3,
	\end{equation}
	where \(A_i\in\Bbbk[t]\) and \(\Bbbk\in\{\mathbb R,\mathbb C\}\). This class may be viewed as a natural extension of \eqref{intro3}, in which the linear term is replaced by a third nonlinear monomial and arbitrary increasing exponents are allowed. From the Newton--Puiseux viewpoint, this is also a natural next case: the equation has just enough monomial terms to produce nontrivial Newton diagrams and several possible edge configurations, while still retaining a rigid structure that allows for explicit degree restrictions and counting estimates.
	
	Our aim is to describe the nonconstant rational solutions of \eqref{ecu:main}, to restrict the possible denominator degrees, and to derive explicit upper bounds for the total number of such solutions under a nondegeneracy hypothesis, both over \(\mathbb C\) and over \(\mathbb R\). The first reduction, given in Proposition~\ref{prop:condinv}, shows that every nonconstant rational solution of \eqref{ecu:main} can be written in the form \(x(t)=1/p(t)\), with \(p\in\Bbbk[t]\). In particular, Corollary~\ref{cor:finiteness} implies that only finitely many such solutions can occur. Thus the problem becomes one of determining which denominator degrees are possible and how many rational solutions may occur for each admissible degree.
	
	The main tool is the Newton polygon at infinity. For each admissible value \(r=\deg p\), the Newton diagram determines an edge polynomial \(P_r\). Any rational solution of denominator degree \(r\) yields a nonzero root of \(P_r\), and, under suitable simplicity and nonresonance assumptions, such a root determines a unique formal Laurent solution with prescribed leading term. This gives degreewise bounds for the number of rational solutions. In the real case, these bounds are refined by using Descartes' rule of signs. This Newton--Puiseux approach goes back to Cano's work on differential equations and has also been used in the study of invariant algebraic curves; see \cite{Cano1993Series,Cano2005NewtonPolygon,Demina2021InvariantCurves,DeminaPuiseux2022}.
	
	Throughout the paper, we impose a nondegeneracy hypothesis to focus on a generic class of equations for which the Newton--Puiseux recursion is unique and algebraic identifications among possible leading coefficients are excluded. The cases not covered by this hypothesis can be treated, in principle, by the same method; we do not pursue their systematic case-by-case analysis here.

	The main structural result of the paper is Theorem~\ref{thm:two-rational-solutions}. Under the nondegeneracy hypothesis of Definition~\ref{def:ND}, if two distinct nonconstant rational solutions coexist, then the maximal realized denominator degree must satisfy one of three alternatives. These alternatives are refined in Propositions~\ref{prop:C1}, \ref{prop:C2}, and \ref{prop:C3}, showing that the possible degree patterns become highly restricted once two rational solutions exist.
	
	A complementary restriction is obtained in Corollary~\ref{cor:at-most-three-distinct-degrees}, which shows that if \eqref{ecu:main} has at least three distinct nonconstant rational solutions, then at most three denominator degrees can occur. This result does not require the nondegeneracy hypothesis and complements the two-solution analysis. In addition, the three-solution subsystem yields explicit coefficient constraints through the linear identities of Section~\ref{Sect:5}; conversely, these identities provide a practical way to construct examples.
	
	The final counting results are proved in Section~\ref{Sect:6}. Combining the structural restrictions with the degreewise Newton--Puiseux bounds, Proposition~\ref{prop:global-count-by-cases} gives explicit case-by-case upper bounds over \(\mathbb C\), while Corollary~\ref{cor:global-count-by-cases-real-refined} gives uniform numerical bounds over \(\mathbb R\). In particular, in the real case, when at least two nonconstant rational solutions exist, the largest bound obtained here is \(\mathcal S\le 12\). Some of these bounds are sharp, as shown by the final example.
	
	The paper is organized as follows. In Section~\ref{Sect:2} we reduce the problem to solutions of the form \(x=1/p(t)\) and prove finiteness. In Section~\ref{Sect:3} we introduce the Newton-polygon method and obtain degreewise counting bounds. In Section~\ref{Sect:4} we state the nondegeneracy hypothesis and classify the admissible tie configurations associated with one rational solution. In Section~\ref{Sect:5} we study the coexistence of two rational solutions and the possible denominator degrees, and we also discuss the case of three rational solutions. In Section~\ref{Sect:6} we derive the global bounds over \(\mathbb C\) and \(\mathbb R\). Finally, in Section~\ref{Sect:7} we present some conclusions and discuss possible directions for generalization.
	
	\section{Preliminaries}\label{Sect:2}
	
	\paragraph{\textbf{Standing assumptions and notation.}}
	Throughout the paper we work over a base field $\Bbbk\in\{\mathbb{C},\mathbb{R}\}$ and consider the generalized Abel equation
	\begin{equation}\tag{\ref*{ecu:main}}
		x' \;=\; A_3(t)\,x^{n_3} \;+\; A_2(t)\,x^{n_2} \;+\; A_1(t)\,x^{n_1},
	\end{equation} 
	where $A_i(t)\in\Bbbk[t]$ are nonzero polynomials and $n_i\in\mathbb{N}$ satisfy
	$1<n_1<n_2<n_3$. We use $\operatorname{lc}(\cdot)$ to denote the leading coefficient with respect to $t$. For each $i\in\{1,2,3\}$ we set $a_i:=\deg A_i$ and $\alpha_i:=\operatorname{lc}(A_i)\in\Bbbk^\times$, and write
	\[
	A_i(t)=\alpha_i t^{a_i}+\sum_{k=0}^{a_i-1}\alpha_{i,k}t^k,
	\qquad \alpha_{i,k}\in\Bbbk.
	\]

	We provide first the condition for a rational function to be a solution of the equation. 
	
	\begin{proposition}\label{prop:condinv}
		Let $p,q \in \Bbbk[t]$ be nonzero, coprime polynomials. Assume that $q$ is monic. Then the rational function $x(t)=q(t)/p(t)$ is a solution of \eqref{ecu:main} if and only if the following conditions hold:
		\begin{enumerate}[(a)]
			\item $q(t) \equiv 1$.
			\item $p(t)^{\,n_3-2}\,p'(t)  +  A_3(t)  +  A_2(t)\,p(t)^{\,n_3-n_2}  +  A_1(t)\,p(t)^{\,n_3-n_1} \equiv 0.$
		\end{enumerate}
	\end{proposition}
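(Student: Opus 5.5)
Substitute $x=q/p$ into \eqref{ecu:main} and clear denominators. Since $x'=(q'p-qp')/p^2$, multiplying by $p^{n_3}$ gives the polynomial identity
\[
(q'p-qp')\,p^{\,n_3-2} \;=\; A_3q^{n_3}+A_2q^{n_2}p^{\,n_3-n_2}+A_1q^{n_1}p^{\,n_3-n_1},
\]
which is equivalent to $x=q/p$ being a solution, because $n_3\ge 3$ makes $n_3-2\ge 1$ and all exponents nonnegative. The aim is to show that $q$ is constant, hence $q\equiv1$ since $q$ is monic. Once $q=1$, the identity becomes $-p'p^{n_3-2}=A_3+A_2p^{n_3-n_2}+A_1p^{n_3-n_1}$, which is exactly (b). Conversely, (a) and (b) give back the identity, so $x=1/p$ solves the equation.

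To prove $q$ is constant, I will argue by contradiction. Suppose $q$ has a root $\tau\in\mathbb C$, working over $\mathbb C$ even when $\Bbbk=\mathbb R$, and let $m\ge1$ be its multiplicity in $q$. Since $\gcd(p,q)=1$, $p(\tau)\ne0$. On the left side, $q'p-qp'$ vanishes at $\tau$ to order exactly $m-1$: the term $q'p$ has order $m-1$ because $p(\tau)\neq0$, while $qp'$ has order at least $m$. Multiplying by $p^{n_3-2}$, which is nonzero at $\tau$, leaves the order at $m-1$. On the right side, every term carries a factor $q^{n_i}$ with $n_i\ge n_1\ge2$, so the right side vanishes at $\tau$ to order at least $2m$. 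But $2m>m-1$, so the two orders cannot match, a contradiction. Hence $q$ has no roots, so it is a nonzero constant, and therefore $q\equiv1$.

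An equivalent and shorter version of this step is to divide the identity by $q^{2}$. Then $q$ divides $q'p\,p^{n_3-2}$, and coprimality forces $q\mid q'$, so $q'=0$.

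The only delicate points are the exponent bookkeeping and the sign in (b). In particular, the expansion of $x'$ produces $-p'$, so the equation obtained with $q=1$ must be checked against the statement's $+p'$ term. If the signs disagree, the discrepancy has to be tracked honestly, either by checking the paper's convention or by recording the corrected sign in (b). Everything else is routine. The hypothesis $n_1>1$ is essential: it is precisely what rules out nonconstant numerators, in contrast with the case where a linear term is present.
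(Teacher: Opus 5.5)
Your proof is correct and essentially the paper's argument: your order-of-vanishing comparison at a root of $q$ is the local form of the paper's divisibility chain $q\mid p^{n_3-2}(q'p-qp')\Rightarrow q\mid q'$, which is exactly your ``shorter version''. Two side remarks are off, though. The sign worry is unfounded, since with $q=1$ the identity reads $-p^{n_3-2}p'=A_3+A_2p^{n_3-n_2}+A_1p^{n_3-n_1}$, which is (b) verbatim. And $n_1>1$ is not essential here, because your own argument only needs each right-hand term to carry a factor $q^{n_i}$ with $n_i\ge 1$ (order $\ge m>m-1$ already gives the contradiction).
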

	
	\begin{proof}
		If $x(t)=q(t)/p(t)$ solves \eqref{ecu:main}, then, omitting arguments,
		\begin{equation}\label{eq:prop1}
			p^{\,n_3-2}\big(q'p-qp'\big)
			= A_3\,q^{n_3} + A_2\,q^{n_2}p^{\,n_3-n_2} + A_1\,q^{n_1}p^{\,n_3-n_1}.
		\end{equation}
		Since $n_1>1$, the right-hand side is divisible by $q$, hence $q \mid p^{\,n_3-2}(q'p-qp')$. As $\gcd(p,q)=1$, we also have $\gcd(p^{\,n_3-2},q)=1$, so $q \mid (q'p-qp')$. Because $q \mid qp'$, it follows that $q \mid q'p$, and therefore $q \mid q'$. Hence $q$ is constant, and being monic, $q\equiv 1$.
		
		Substituting $q\equiv 1$ into \eqref{eq:prop1} gives (b). The converse is straightforward to verify.
	\end{proof}
	
	In view of Proposition~\ref{prop:condinv}, every nonconstant rational solution of \eqref{ecu:main} can and will be written as $x(t)=1/p(t)$, without imposing any normalization on $p(t)$. In what follows, each time we refer to rational solutions we will assume $\deg(p)\geq1$, that is, we will exclude constant solutions. 
	
	Now, by Proposition~\ref{prop:condinv}, the rational function $x=1/p(t)$ is a solution of \eqref{ecu:main} if and only if \begin{equation}\label{ecu:condinv}
		A_3(t)  =  -\,p(t)^{\,n_3-n_2}\big(p(t)^{\,n_2-2} p'(t)  +  A_1(t)\, p(t)^{\,n_2-n_1}  +  A_2(t)\big).
	\end{equation}
	In particular, $p(t)^{\,n_3-n_2}$ divides $A_3(t)$, and therefore
	$\deg p \;\le\;\deg A_3/(n_3-n_2)$. So, as an immediate by-product of Proposition~\ref{prop:condinv}, we obtain the following result.
	
	\begin{corollary}\label{cor:finiteness}
		Equation \eqref{ecu:main} has a finite number of rational solutions.
	\end{corollary}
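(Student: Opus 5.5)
The plan is to count separately the constant solutions and the nonconstant ones, and in each case reduce to the fact that a nonzero one-variable polynomial has finitely many roots. For nonconstant solutions, Proposition~\ref{prop:condinv} says every such solution is $x=1/p(t)$ with $\deg p\ge 1$. Identity \eqref{ecu:condinv} shows that $p^{\,n_3-n_2}$ divides $A_3$. Since $n_3-n_2\ge 1$ and $A_3\neq 0$, $p$ divides $A_3$.

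First I would exploit this divisibility. Write $p=c\,d$ with $c\in\Bbbk^\times$ and $d$ monic. Then $d$ is a monic divisor of $A_3$ of positive degree. A nonzero polynomial has only finitely many monic divisors, so there are finitely many possible $d$. The normalization was left free, however, so it remains to show that for each fixed $d$ only finitely many constants $c$ give a solution.

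Fix $d$ and substitute $p=c\,d$ into condition (b) of Proposition~\ref{prop:condinv}. This gives
\[
c^{\,n_3-1}d^{\,n_3-2}d' + c^{\,n_3-n_1}A_1 d^{\,n_3-n_1} + c^{\,n_3-n_2}A_2 d^{\,n_3-n_2} + A_3 \equiv 0 .
\]
Regard the left-hand side as a polynomial in $c$ with coefficients in $\Bbbk[t]$. Since $n_1,n_2>1$, we have
\[
n_3-1>n_3-n_1>n_3-n_2>0 .
\]
Hence the term $c^{\,n_3-1}$ occurs only once, with coefficient $d^{\,n_3-2}d'$. This coefficient is nonzero because $\deg d\ge 1$. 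Choose $t_0\in\Bbbk$ with $d(t_0)^{n_3-2}d'(t_0)\neq 0$ and evaluate the identity at $t_0$. Every admissible $c$ is then a root of a nonzero polynomial in one variable over $\Bbbk$, so there are finitely many such $c$.

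If one also wants to include constant solutions $x\equiv c$, the argument is similar. Either $c=0$, or $c$ satisfies
\[
A_3c^{\,n_3-n_1}+A_2c^{\,n_2-n_1}+A_1\equiv 0 .
\]
Evaluating at a point where $A_3$ does not vanish again yields a nonzero polynomial in $c$, so only finitely many constants occur.

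The only real obstacle is the free scaling constant $c$: the divisibility argument alone bounds only the monic part of $p$. I handle this by noticing that the derivative term contributes the strictly highest power of $c$, and that power cannot cancel against the other terms.
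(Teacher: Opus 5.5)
Your proof is correct and follows essentially the same route as the paper. The divisibility $p^{\,n_3-n_2}\mid A_3$ leaves finitely many choices of $p$ up to a scalar, and the scalar is then confined to the roots of a nonzero polynomial in which one extreme power is isolated. The differences are cosmetic: the paper fixes a known solution $1/p$ and subtracts the two identities, getting a polynomial in the ratio $\alpha$ with leading coefficient $A_3$, whereas you parametrize $p=c\,d$ directly and use the derivative term $d^{\,n_3-2}d'$ as leading coefficient; you also dispose of constant solutions, which the paper excludes by convention.
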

	
	\begin{proof}
		Since $p(t)^{\,n_3-n_2}\mid A_3(t)$, there are only finitely many possibilities for $p(t)$ up to multiplication
		by a unit in $\Bbbk$, as they are determined by the factorization of $A_3(t)$.
		
		Given a nonconstant rational solution $x(t)=1/p(t)$ of \eqref{ecu:main}, let $\alpha\in\Bbbk^\times$.
		Then, it is a straightforward computation to check that $\alpha x(t)=1/(p(t)/\alpha)$ is also a rational solution if and only if 
		\begin{equation}\label{eq:scaled-condition}
			\big(\alpha^{n_3-1}-1\big)\,A_3
			\;+\;\big(\alpha^{n_2-1}-1\big)\,A_2\,p^{\,n_3-n_2}
			\;+\;\big(\alpha^{n_1-1}-1\big)\,A_1\,p^{\,n_3-n_1}
			\ \equiv\ 0.
		\end{equation}
		Since the coefficient of $\alpha^{n_3-1}$ equals $A_3\neq 0$, equation \eqref{eq:scaled-condition} is a nonzero polynomial in $\alpha$ of degree $n_3-1$ over \(\Bbbk(t)\), hence it admits at most \(n_3-1\) constant solutions \(\alpha\in\Bbbk^\times\).
	\end{proof}
	
	\section{A Newton–Puiseux method for rational solutions}\label{Sect:3}
	
	In this section we study rational solutions of the differential equation using a Newton polygon approach. More precisely, we restrict ourselves to solutions of the form $x(t)=1/p(t)$, with $p(t)\in\Bbbk[t]$, and we analyze their asymptotic behaviour at infinity via the associated Newton diagram (see \cite{Cano2005NewtonPolygon, Demina2021InvariantCurves} for the differential setting and \cite{Walker1950} for the classical algebraic-geometric background).
	
	Since we are only interested in rational solutions of \eqref{ecu:main} of the form $x(t)=1/p(t)$, all expansions at infinity are understood as Laurent series in $t^{-1}$. In particular, each such solution admits a well-defined expansion $x(t)=C\,t^{-r}+\text{lower order terms in } t^{-1},\ r=\deg p$.
	
	By convention, we encode the derivative term $x'(t)$ as a vertex $\partial$ with associated weights $a_\partial=-1$ and $n_\partial = 1$, since $x'(t)\sim t^{-1}x(t)$ at leading order.
	
	\begin{definition}\label{def:Tr}
		For $r \in \mathbb{N}_{>0}$ set $\Phi_i(r)=n_i r-a_i$ for $i=1,2,3$ and $\Phi_\partial(r)=r+1$. Define the minimum order $O_r:=\min\{\Phi_\ell(r):\ \ell\in\{3,2,1,\partial\}\}$ and \[T_r:=\{\ell \in \{3,2,1,\partial\} : \Phi_\ell(r)=O_r\}.\]
	\end{definition}
	
	We associate to the equation the set of exponent points
	\[
	\mathcal{Q}=\{(a_1,n_1),(a_2,n_2),(a_3,n_3),(-1,1)\} \cup\{(k,n_i): i\in\{1,2,3\},\ 0\le k\le a_i,\ \alpha_{i,k}\neq 0\}.
	\]
	
	The Newton polygon of the differential equation is defined as $\mathcal{N}=\mathrm{Conv}\big(\mathcal{Q}+\mathbb{R}_{\ge0}^2\big)$.  The Newton diagram of the differential equation is the lower boundary of $\mathcal{N}$, i.e., the union of the compact edges of the lower boundary of $\mathcal{N}$. For the determination of those values $r>0$ for which $|T_r|\ge 2$, it is enough to consider the reduced set
	\[
	Q_{\mathrm{low}}=\{Q_1 = (a_1,n_1),\,Q_2 = (a_2,n_2),\,Q_3 = (a_3,n_3),\,Q_\partial = (-1,1)\}.
	\]
	Indeed, any additional exponent point of the form $(k,n_i)$ with $k<a_i$ satisfies $n_ir-k>n_ir-a_i$ for every $r>0$, and therefore it cannot contribute to the minimal order $O_r$ nor to the index set $T_r$. Accordingly, the compact lower edges relevant to such values $r>0$ are already determined by the lower boundary of $\operatorname{Conv}(Q_{\mathrm{low}})$.
	
	\begin{definition}
		We call $r\in\mathbb{N}_{>0}$ edge-admissible for equation \eqref{ecu:main} if $|T_r|\ge 2$. Equivalently, there is a unique lower edge $E_r\subset \mathcal{N}$ of the Newton diagram whose supporting line has slope $1/r$ (possibly containing two or more points), in which case $T_r=\{\ \ell\in\{3,2,1,\partial\}\ :\ Q_\ell\in E_r\ \}$.
	\end{definition}
	
	Equivalently,
	\[
	\Phi_i(r)=\Phi_j(r)\le \Phi_k(r)\ \ \forall k \in \{3,2,1,\partial\}
	\] 
	if and only if the line $\{(a,n):\ -a+nr=O_r\}$ supports $\mathcal{N}$ along the closed segment $[Q_i,Q_j]$, and in that case
	\[
	r=\frac{a_i-a_j}{\,n_i-n_j\,}.
	\]
	Thus, equalities of orders are in bijection with lower edges, and every edge-admissible $r$ arises from some lower edge of $\mathcal{N}$.
	
	\begin{definition}
		Let $r\in\mathbb{N}_{>0}$ be an edge-admissible value and let $T_r$ be the corresponding index set of vertices. The edge polynomial associated to $E_r$ is
		\[
		P_r(C)\;:=\;\sum_{\ell\in T_r\cap\{3,2,1\}}\alpha_\ell \,C^{n_\ell}
		\;+\;\mathbf{1}_{\{\partial\in T_r\}}\,r\,C,
		\]
		where $\mathbf{1}_{\{\cdot\}}$ is the indicator function. 
	\end{definition}
	
	\begin{lemma}\label{lemma:lc}
		Let $r\in\mathbb{N}_{>0}$ be an edge-admissible value and let $x(t)$ be a formal Laurent series solution of the differential equation of the form $x(t)=C\,t^{-r}+\text{higher order terms}$, with $C\neq 0$. Then the leading coefficient $C$ is a root of the edge polynomial associated to $E_r$, that is, $P_r(C)=0$.
	\end{lemma}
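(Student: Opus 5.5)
The plan is the usual dominant-balance argument: substitute the Laurent expansion into \eqref{ecu:main}, find the smallest exponent of \(t^{-1}\) that appears, and read off its coefficient. Write
\[
x(t)=C\,t^{-r}\bigl(1+u(t)\bigr),\qquad u(t)\in t^{-1}\Bbbk[[t^{-1}]],
\]
so that "higher order terms" means terms of strictly larger order in \(t^{-1}\). Each side of the equation then has an order in \(t^{-1}\), where the order of \(c\,t^{-m}\) with \(c\neq0\) is \(m\). The leading coefficient of each monomial is easy to compute.

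\textbf{Derivative term.} We have \(x'=-rC\,t^{-r-1}+\dots\), so its order is \(r+1=\Phi_\partial(r)\) and its leading coefficient is \(-rC\). Here \(r\ge1\) is used, so \(-rC\neq0\).

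\textbf{Monomial terms.} For each \(i\), \(A_i(t)x^{n_i}=\alpha_i C^{n_i}t^{a_i-n_ir}(1+\dots)\). Its order is \(n_ir-a_i=\Phi_i(r)\) and its leading coefficient is \(\alpha_iC^{n_i}\). The lower-degree terms \(\alpha_{i,k}t^k\) of \(A_i\) only contribute orders \(n_ir-k>\Phi_i(r)\), which is exactly the observation made before Definition~\ref{def:Tr}.

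Now move everything to one side:
\[
0=\sum_{i=1}^3A_ix^{n_i}-x' .
\]
Every summand has order at least \(O_r\). Therefore the coefficient of \(t^{-O_r}\) must vanish, and only the indices in \(T_r\) contribute to it. This coefficient is
\[
\sum_{\ell\in T_r\cap\{3,2,1\}}\alpha_\ell C^{n_\ell}\;-\;\mathbf{1}_{\{\partial\in T_r\}}\,rC .
\]

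\textbf{Main obstacle: the sign of the derivative term.} If \(\partial\notin T_r\), the expression above is exactly \(P_r(C)\) and we are done. If \(\partial\in T_r\), the sign of \(rC\) is opposite to the one in the definition of \(P_r\), so a literal comparison fails. To resolve this, the first thing I would do is check the intended sign convention from the exponent point \(Q_\partial=(-1,1)\). With the equation written as \(x'-\sum A_ix^{n_i}=0\), the derivative enters with weight \(-r\) times the coefficient \(C\); the coefficient of \(t^{-O_r}\) is then, up to an overall sign,
\[
rC-\sum_{\ell\in T_r\cap\{3,2,1\}}\alpha_\ell C^{n_\ell}.
\]
This agrees with \(P_r\) only if \(r\) is replaced by \(-r\), or if each \(\alpha_\ell\) is replaced by \(-\alpha_\ell\). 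So I expect the correct statement of the lemma to carry \(-r\,C\) in the edge polynomial, equivalently to use the convention that \(x\sim Ct^{-r}\) gives \(x'\sim -rCt^{-r-1}\) absorbed into \(P_r\).

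I would handle this by proving the vanishing of the displayed coefficient above, and then remarking that it coincides with \(P_r(C)\) after that sign normalization. When \(\partial\notin T_r\) no normalization is needed.

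In both cases the conclusion is the same in substance: \(C\) is a nonzero root of the polynomial obtained from the monomials lying on the edge \(E_r\). Since \(C\neq0\), one may also divide by \(C\) (every \(n_\ell\ge1\)), though this is not needed for the lemma.
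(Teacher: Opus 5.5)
\textbf{Gap: a sign error at the key step.} Your dominant-balance strategy is the same as the paper's, and your bookkeeping of orders is fine: the lower-degree terms of $A_i$, the correction $u(t)$, and $r\ge 1$ are all handled correctly. The coefficient you extract at order $t^{-O_r}$, however, is wrong, and this error alone produces your ``main obstacle.'' You correctly compute that $x'$ has leading coefficient $-rC$. Hence in $0=\sum_i A_ix^{n_i}-x'$ the term $-x'$ contributes $-(-rC)=+rC$ at order $t^{-O_r}$ when $\partial\in T_r$, not $-rC$. The coefficient of $t^{-O_r}$ is therefore
\[
\sum_{\ell\in T_r\cap\{3,2,1\}}\alpha_\ell C^{n_\ell}+\mathbf{1}_{\{\partial\in T_r\}}\,rC=P_r(C),
\]
which is exactly the edge polynomial as defined. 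You make the same slip with $x'-\sum_iA_ix^{n_i}=0$: there the coefficient is $-rC-\sum_\ell\alpha_\ell C^{n_\ell}=-P_r(C)$, not $rC-\sum_\ell\alpha_\ell C^{n_\ell}$. So the lemma is correctly stated and no sign normalization is needed.

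As written, your proposal would instead ``prove'' that $\sum_\ell\alpha_\ell C^{n_\ell}-rC=0$ whenever $\partial\in T_r$, and this is false. In Example~\ref{ex:sharp-real}, take $C=-2$ and $r=2$. Then $\frac{4}{3}C^6-7C^4+\frac{23}{3}C^2=4=-rC$, so $P_2(-2)=0$, while your expression equals $8$. Your closing remark that the two expressions ``coincide after a sign normalization'' cannot give $P_r(C)=0$. Replacing $r$ by $-r$ changes the polynomial and its nonzero roots. It would also break the resonance factor $P_r'(C)+N$ in Theorem~\ref{thm:existence} and the computation in Remark~\ref{rem:ND-binomial}, both of which rely on the $+rC$ convention. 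Correct the sign and drop the obstacle paragraph; what remains is a valid proof along the same lines as the paper's.
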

	
	\begin{proof}
		Let $x(t)=C t^{-r}+\text{higher order terms}$ be a formal Laurent solution. Substituting this expansion into the differential equation, we compare the lowest order terms in $t^{-1}$ (equivalently, the dominant terms as $t\to\infty$).
		
		The derivative contributes $x'(t)=-r C t^{-r-1}+\cdots$.
		
		Each term $A_i(t)x(t)^{n_i}$ has leading contribution \[A_i(t)x(t)^{n_i}=\alpha_i t^{a_i}(C t^{-r})^{n_i}+\cdots =\alpha_i C^{n_i} t^{a_i-n_i r}+\cdots,\] with $\alpha_i \neq 0$.
		
		By definition of the Newton diagram, the edge-admissible value $r$ is such that at least two of the exponents $a_i-n_i r$ and $-r-1$ coincide and define a compact edge of slope $1/r$. Hence, the lowest order terms in the equation correspond exactly to the monomials associated to the edge $E_r$.
		
		Factoring out the common power of $t$, the leading balance yields \[\sum_{\ell\in T_r\cap\{3,2,1\}} \alpha_\ell C^{n_\ell} \;+\; \mathbf{1}_{\{\partial\in T_r\}}\,r\,C=0,\] which is precisely the equation $P_r(C)=0$. Therefore, the leading coefficient $C$ must be a root of $P_r$.
	\end{proof}
	
	\begin{proposition}\label{prop:leading_coeff_root}
		Let $x(t)=1/p(t)$ be a rational solution of the differential equation, where $p(t)\in\Bbbk[t]$, $\deg p = r$, and $\operatorname{lc}(p)=c$. Then $r$ is an edge-admissible value and $P_r(1/c)=0$.
	\end{proposition}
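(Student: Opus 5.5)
Expand $x=1/p(t)$ at infinity. Writing $p(t)=c\,t^r(1+c^{-1}b_{r-1}t^{-1}+\cdots)$ gives the Laurent expansion $x(t)=c^{-1}t^{-r}+\cdots$ in powers of $t^{-1}$, with leading coefficient $C:=1/c\neq0$. Its derivative is $x'(t)=-rC\,t^{-r-1}+\cdots$, and the leading coefficient $-rC$ is nonzero because $r\ge1$.

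The main step is to show that $r$ is edge-admissible, i.e.\ $|T_r|\ge2$. Substituting the expansion into \eqref{ecu:main}, each term $A_i x^{n_i}$ has exact leading monomial $\alpha_iC^{n_i}t^{a_i-n_ir}$, since $\alpha_i\ne0$ and $C\ne0$. In terms of orders in $t^{-1}$ this is $\Phi_i(r)$, and $x'$ has order exactly $\Phi_\partial(r)=r+1$. Move everything to one side:
\[
-x'+A_3x^{n_3}+A_2x^{n_2}+A_1x^{n_1}=0 .
\]
This is an identity of Laurent series. Suppose the minimum order $O_r$ were attained by a single index $\ell$. Then the coefficient of $t^{-O_r}$ on the left would be exactly the leading coefficient of that single term, which is $\alpha_\ell C^{n_\ell}$ or $-rC$. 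This is nonzero, a contradiction. Hence $|T_r|\ge2$ and $r$ is edge-admissible.

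Lemma~\ref{lemma:lc} now applies to the formal Laurent solution $x=C t^{-r}+\cdots$, with $r$ edge-admissible and $C\ne0$. It gives $P_r(C)=0$, that is, $P_r(1/c)=0$. Equivalently, one can read this off directly: the coefficient of $t^{-O_r}$ is $\sum_{\ell\in T_r\cap\{3,2,1\}}\alpha_\ell C^{n_\ell}-\mathbf 1_{\{\partial\in T_r\}}rC$, and it must vanish.

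There is a sign to check here. The definition of $P_r$ has $+rC$, while the balance just computed has $-rC$ (with $x'$ moved to the other side). I expect reconciling this with the paper's convention to be the only delicate point. The approach is to redo the balance from Proposition~\ref{prop:condinv}(b), taking leading terms of $p^{n_3-2}p'+A_3+A_2p^{n_3-n_2}+A_1p^{n_3-n_1}$. There $p'$ contributes $rc\,t^{r-1}$ with a plus sign. After dividing by $c^{n_3}$ and substituting $C=1/c$, I would check that it matches $P_r$ as defined, adopting the convention of Lemma~\ref{lemma:lc}.
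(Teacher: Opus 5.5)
Your proof is correct and follows the paper's route: expand $1/p$ at infinity with leading coefficient $C=1/c$, then apply Lemma~\ref{lemma:lc}. Your explicit argument for $|T_r|\ge 2$ improves on the paper's proof. If a single index minimized the order, its nonzero leading coefficient would survive at $t^{-O_r}$, a contradiction. The paper instead invokes Lemma~\ref{lemma:lc}, whose hypotheses already include edge-admissibility, and only afterwards asserts ``in particular'' that $r$ is edge-admissible.

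The sign discrepancy you flag does not exist; it comes from a slip in your own bookkeeping. Since $x'=-rC\,t^{-r-1}+\cdots$, the term $-x'$ contributes $+rC$, not $-rC$. So the coefficient of $t^{-O_r}$ in $-x'+A_3x^{n_3}+A_2x^{n_2}+A_1x^{n_1}$ is $\sum_{\ell\in T_r\cap\{3,2,1\}}\alpha_\ell C^{n_\ell}+\mathbf{1}_{\{\partial\in T_r\}}\,rC=P_r(C)$, exactly as defined. No convention needs reconciling.

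Your proposed cross-check via Proposition~\ref{prop:condinv}(b) confirms this. The term $p^{n_3-2}p'$ has leading term $rc^{n_3-1}t^{(n_3-1)r-1}$. Dividing the top-degree balance by $c^{n_3}$ then gives $P_r(1/c)=0$ with the $+rC$ term. Compare Example~\ref{ex:sharp-real}, where $P_2(C)=\tfrac43C^6-7C^4+\tfrac{23}{3}C^2+2C$ vanishes at $C=-1$.

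The same slip appears, harmlessly, in your edge-admissibility step. There the lone $\partial$-term of $-x'+\cdots$ has leading coefficient $rC$ rather than $-rC$, but only its nonvanishing is used.
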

	
	\begin{proof}
		Since $p(t)$ is a polynomial of degree $r$ with leading coefficient $c$, we have the Laurent expansion at infinity
		$\frac{1}{p(t)}=(1/c)\,t^{-r}+\text{higher order terms}$. Hence $x(t)$ is a formal Laurent series solution of the differential equation with leading exponent $-r$ and leading coefficient $1/c$. By Lemma~\ref{lemma:lc}, it follows that
		$P_r(1/c)=0$. In particular, the existence of such a leading term implies that $r$ is edge-admissible.
	\end{proof}
	
	The next statement is a standard Newton--Puiseux existence/uniqueness step for differential equations (\cite{Cano2005NewtonPolygon,DeminaPuiseux2022}), adapted here to our setting at infinity and to the edge polynomials \(P_r\). We include a proof to keep the presentation self-contained and to match the hypotheses used later in the counting arguments.
	
	\begin{theorem}\label{thm:existence}
		Let $r\in\mathbb{N}_{>0}$ be an edge-admissible value and let $C\in\Bbbk^\times$ be a root of $P_r$. Assume that $P_r'(C)\neq 0$, and, if $\partial\in T_r$, that $P_r'(C)+m\neq 0$ for every integer $m\ge 1$. Then there exists a unique formal Laurent series solution of \eqref{ecu:main} of the form
		$x(t)=\sum_{m\ge 0} c_m t^{-r-m}$, with $\qquad c_0=C$.
		
	\end{theorem}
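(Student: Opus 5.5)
We substitute the ansatz $x(t)=\sum_{m\ge0}c_m t^{-r-m}$ into \eqref{ecu:main} and solve for the coefficients $c_m$ recursively, order by order in $t^{-1}$. First we normalize by multiplying the equation by $t^{O_r}$, where $O_r$ is the minimal order from Definition~\ref{def:Tr}. Each term then becomes a series in $s=t^{-1}$ with nonnegative exponents. Concretely,
\[
t^{O_r}x'=-\sum_{m\ge0}(r+m)c_m\,t^{\,O_r-r-1-m},\qquad
t^{O_r}A_i x^{n_i}=t^{\,O_r-\Phi_i(r)}\Big(\alpha_i+\sum_{k\ge1}\alpha_{i,a_i-k}t^{-k}\Big)\Big(\sum_{m\ge0}c_m t^{-m}\Big)^{n_i}.
\]
Since $O_r\le\Phi_\ell(r)$ for all $\ell$, every exponent is $\le0$. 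The terms with exponent exactly $0$ come from $\ell\in T_r$. Their coefficient at order $t^0$ is $-P_r(c_0)$ up to the sign convention. The sign of the $\partial$ term in $P_r$ should be checked against $x'=-rCt^{-r-1}$, absorbing signs into the definition as in Lemma~\ref{lemma:lc}. So the order-$0$ equation is exactly $P_r(C)=0$, which holds by hypothesis.

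Next, for $m\ge1$ we isolate the coefficient of $t^{-m}$. The key observation is that $c_m$ first appears at order $t^{-m}$, and only through the terms in $T_r$, and only linearly. Indeed, in $(\sum c_jt^{-j})^{n_i}$ the coefficient of $t^{-m}$ is $n_ic_0^{n_i-1}c_m$ plus a polynomial in $c_0,\dots,c_{m-1}$. For $\ell\notin T_r$ the extra factor $t^{O_r-\Phi_\ell(r)}$, with $O_r-\Phi_\ell(r)<0$ an integer, pushes $c_m$ to order below $t^{-m}$. Here we use $r\in\mathbb N$ and $a_i,n_i\in\mathbb N$, so all these exponents are integers. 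The lower-degree coefficients $\alpha_{i,k}$ likewise only shift to lower orders. For the derivative, if $\partial\in T_r$, the coefficient of $t^{-m}$ contributes $-(r+m)c_m$. Collecting terms, the order-$m$ equation takes the form
\[
\Big(\sum_{\ell\in T_r\cap\{1,2,3\}} n_\ell\alpha_\ell C^{n_\ell-1}+\mathbf 1_{\{\partial\in T_r\}}(r+m)\Big)c_m \;=\; R_m(c_0,\dots,c_{m-1}),
\]
up to a global sign, with $R_m$ a polynomial whose coefficients come from the $\alpha_{i,k}$.

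The bracket is $P_r'(C)$ when $\partial\notin T_r$, and $P_r'(C)+m$ when $\partial\in T_r$. The latter holds because $\frac{d}{dC}(rC)=r$, leaving an extra $m$. Matching the sign convention of the $\partial$ term with the hypothesis ``$P_r'(C)+m\neq0$'' is the one delicate bookkeeping point, and I would verify it carefully against the convention of Lemma~\ref{lemma:lc}. Under the hypotheses the bracket is nonzero for every $m\ge1$. So $c_m$ is uniquely determined by $c_0,\dots,c_{m-1}$, and it lies in $\Bbbk$ because everything is defined over $\Bbbk$. Induction on $m$ then gives existence and uniqueness of the formal series with $c_0=C$.

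The main obstacle is the bookkeeping claim that nothing from a non-edge vertex, or from the lower-order coefficients of $A_i$, can reach order $t^{-m}$ with $c_m$ in it. This rests on the integrality of the gaps $\Phi_\ell(r)-O_r\ge1$ for $\ell\notin T_r$, which I would state explicitly. I would also remark that all sums are finite at each order, so the recursion is well defined formally.
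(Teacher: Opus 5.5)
Your proposal is correct and follows essentially the same route as the paper. Both arguments recurse order by order: $c_m$ enters linearly with coefficient $P_r'(C)+m\,\mathbf{1}_{\{\partial\in T_r\}}$, and integrality of the gaps $\Phi_\ell(r)-O_r$ keeps every non-edge vertex and every lower coefficient $\alpha_{i,k}$ away from that order. The sign you flagged does check out: with $F(x)=x'-\sum_i A_i x^{n_i}$, the $t^{0}$ coefficient is $-P_r(C)$ and the $c_m$-coefficient is $-(P_r'(C)+m\,\mathbf{1}_{\{\partial\in T_r\}})$. The paper works with truncations $x_N$ and the coefficient of $t^{-O_r-N}$ in $F(x_N)$ rather than normalizing by $t^{O_r}$, but that difference is only cosmetic.
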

	
	\begin{proof}
		Let $F(x):=x'-A_3(t)x^{n_3}-A_2(t)x^{n_2}-A_1(t)x^{n_1}$,
		so that \eqref{ecu:main} is equivalent to $F(x)=0$. Recall that
		$O_r=\min\{\Phi_\ell(r):\ell\in\{3,2,1,\partial\}\}$, and for each $N\ge 0$ set $x_N(t):=C\,t^{-r}+\sum_{m=1}^N c_m t^{-r-m}$. 
		
		For $x_0(t)=C\,t^{-r}$, the coefficient of $t^{-O_r}$ in $F(x_0)$ is $-P_r(C)$, hence it vanishes because $C$ is a root of $P_r$.
		
		Assume now that, for some $N\ge 1$, the coefficients $c_1,\dots,c_{N-1}$ have already been chosen so that the coefficients of $t^{-O_r},t^{-O_r-1},\dots,t^{-O_r-(N-1)}$ in $F(x_{N-1})$ vanish. Write $\delta_N:=c_N t^{-r-N}$, so that $x_N=x_{N-1}+\delta_N$. Then \begin{equation}\label{eq:FN-recursion-compacta} F(x_N)=F(x_{N-1})+\delta_N' - \sum_{i=1}^3 A_i(t)\Bigl((x_{N-1}+\delta_N)^{n_i}-x_{N-1}^{n_i}\Bigr).
		\end{equation}
		
		Fix $i\in\{1,2,3\}$. Since $x_{N-1}=C\,t^{-r}+\sum_{m=1}^{N-1}c_m t^{-r-m}$, we have $x_{N-1}^{\,n_i-1}=C^{n_i-1}t^{-(n_i-1)r}+$ lower-order terms. Using the identity $(U+V)^{n_i}-U^{n_i}=n_iU^{n_i-1}V+V^2G_i(U,V)$ for a suitable polynomial $G_i$, we obtain
		\[
		(x_{N-1}+\delta_N)^{n_i}-x_{N-1}^{n_i} = n_i C^{n_i-1}c_N\,t^{-n_i r-N} + \text{terms of order strictly lower than }t^{-n_i r-N}.
		\]
		Moreover, all those lower-order terms depend only on $c_0,\dots,c_N$, and the terms involving $\delta_N^2$ have order at most $t^{-n_i r-N-1}$ because $N\ge 1$.
		
		We claim that the coefficient of $t^{-O_r-N}$ in $F(x_N)$ is
		\begin{equation}\label{eq:key-coefficient-compacta}
			\operatorname{coeff}_{t^{-O_r-N}}F(x_N) = H_N-\Bigl(\sum_{i\in T_r\cap\{3,2,1\}} n_i\alpha_i C^{n_i-1} + \mathbf{1}_{\{\partial\in T_r\}}(r+N) \Bigr)c_N,
		\end{equation}
		where $H_N$ depends only on $c_0,\dots,c_{N-1}$.
		
		Indeed, the contribution of $F(x_{N-1})$ to the coefficient of $t^{-O_r-N}$ depends only on $c_0,\dots,c_{N-1}$. Also, $\delta_N'=-(r+N)c_Nt^{-r-N-1}$, so it contributes to the coefficient of $t^{-O_r-N}$ if and only if $r+N+1=O_r+N$, that is, if and only if $O_r=r+1$, equivalently $\partial\in T_r$; in that case its contribution is exactly $-(r+N)c_N$.
		
		Finally, multiplying the previous expansion by $A_i(t)=\alpha_i t^{a_i}+\text{lower-degree terms}$, the only contribution linear in $c_N$ to the coefficient of $t^{-O_r-N}$ comes from the product of $\alpha_i t^{a_i}$ with $n_i C^{n_i-1}c_N\,t^{-n_i r-N}$, which equals $n_i\alpha_i C^{n_i-1}c_N\,t^{-(\Phi_i(r)+N)}$. This contributes at order $t^{-O_r-N}$ precisely when $\Phi_i(r)=O_r$, i.e.\ precisely when $i\in T_r$. All remaining terms have strictly lower order, or else contribute only through coefficients already determined, and are therefore absorbed into $H_N$. This proves \eqref{eq:key-coefficient-compacta}.
		
		Since $P_r'(C)=\sum_{i\in T_r\cap\{3,2,1\}} n_i\alpha_i C^{n_i-1} +\mathbf{1}_{\{\partial\in T_r\}}\,r$, we may rewrite \eqref{eq:key-coefficient-compacta} as $\operatorname{coeff}_{t^{-O_r-N}}F(x_N) = H_N-\bigl(P_r'(C)+N\,\mathbf{1}_{\{\partial\in T_r\}}\bigr)c_N$. These are precisely the resonances arising from the derivative contribution at order $r+N$: indeed, if $\partial\in T_r$ then $O_r=\Phi_\partial(r)=r+1$, so $\delta_N'(t)=-(r+N)c_N t^{-r-N-1}$ contributes at order $t^{-O_r-N}$, replacing $P_r'(C)$ by $P_r'(C)+N$ in the linear coefficient of $c_N$. By hypothesis, the factor $P_r'(C)+N\,\mathbf{1}_{\{\partial\in T_r\}}$ is nonzero for every $N\ge 1$, so the condition $\operatorname{coeff}_{t^{-O_r-N}}F(x_N)=0$ determines $c_N$ uniquely.
		
		Proceeding recursively, we obtain a unique sequence $(c_N)_{N\ge 0}$ with $c_0=C$ such that, for every $N\ge 0$, the coefficient of $t^{-O_r-N}$ in $F(x_N)$ vanishes. Now set $x(t):=\sum_{m\ge 0} c_m t^{-r-m}$.
		
		For each fixed $N\ge 0$, the coefficient of $t^{-O_r-N}$ in $F(x)$ depends only on $c_0,\dots,c_N$. Indeed, if $m>N$, then the derivative of $c_m t^{-r-m}$ has order $t^{-(r+m+1)}$, which is strictly lower than $t^{-O_r-N}$ because $\Phi_\partial(r) = r+1\ge O_r$. Likewise, any occurrence of $c_m$ with $m>N$ in a monomial contributing to $A_i(t)x^{n_i}$ produces a term of order at most $t^{-(\Phi_i(r)+m)}$, hence also strictly lower than $t^{-O_r-N}$ because $\Phi_i(r)\ge O_r$. Therefore the coefficient of $t^{-O_r-N}$ in $F(x)$ coincides with that of $F(x_N)$, and thus vanishes.
		
		Since this holds for every $N\ge 0$, every coefficient of $F(x)$ is zero. Hence $F(x)=0$, so $x(t)$ is a formal Laurent series solution of \eqref{ecu:main}. Uniqueness follows from the recursive construction.
	\end{proof}
	
	If $\partial\in T_r$ and $P_r'(C)+N=0$ for some $N\ge1$, the recursion at step $N$ yields a solvability condition (the corresponding $H_N$ must vanish) and $c_N$ is no longer uniquely determined. We do not pursue the resonant case here.
	
	\begin{corollary}\label{cor:main_bound}
		Let $\Bbbk\in\{\mathbb{C},\mathbb{R}\}$ and fix an edge-admissible value $r$. Assume that, for every nonzero root $C\in\Bbbk$ of $P_r$, one has $P_r'(C)\neq 0$, and, if $\partial\in T_r$, that $P_r'(C)+m\neq 0$ for every integer $m\ge 1$. Then the number of rational solutions of equation \eqref{ecu:main} of the form $x(t)=1/p(t)$ with $\deg p=r$ is at most the number of nonzero roots of $P_r$ in $\Bbbk$, and therefore at most
		\[
		\deg(P_r)-\operatorname{mult}_0(P_r),
		\]
		where $\operatorname{mult}_0(P_r)$ denotes the multiplicity of $0$ as a root of $P_r$.
	\end{corollary}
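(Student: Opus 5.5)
The plan is to build an injective map from the set of rational solutions $x=1/p(t)$ with $\deg p=r$ into the set of nonzero roots of $P_r$ in $\Bbbk$, and then count those roots.

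\emph{Step 1: define the map.} Let $x=1/p$ be such a solution, with $c=\operatorname{lc}(p)\in\Bbbk^\times$. By Proposition~\ref{prop:leading_coeff_root}, $r$ is edge-admissible and $C:=1/c$ is a root of $P_r$. This root is nonzero, and it lies in $\Bbbk$ because $p\in\Bbbk[t]$. We send $x$ to $C$.

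\emph{Step 2: prove injectivity.} Suppose two solutions $1/p_1$ and $1/p_2$ with $\deg p_1=\deg p_2=r$ have the same leading coefficient $C$. Each expands at infinity as a Laurent series in $t^{-1}$ of the form $\sum_{m\ge0}c_m t^{-r-m}$ with $c_0=C$. For $\Bbbk=\mathbb R$ the coefficients are real, and the uniqueness argument also works verbatim over $\mathbb C$. These expansions are formal Laurent series solutions of \eqref{ecu:main}: the identity $F(1/p)=0$ holds in $\Bbbk(t)$, and $\Bbbk(t)$ embeds into $\Bbbk((t^{-1}))$. By hypothesis, $P_r'(C)\neq0$, and $P_r'(C)+m\neq0$ for all $m\ge1$ when $\partial\in T_r$. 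Theorem~\ref{thm:existence} therefore gives uniqueness of such a formal solution with $c_0=C$. Hence the two expansions agree, and since the embedding is injective, $1/p_1=1/p_2$.

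The only point needing care is that uniqueness in Theorem~\ref{thm:existence} really means uniqueness among \emph{all} formal solutions of that shape, not just the constructed one. This follows from the recursion. Any formal solution $\sum c_m t^{-r-m}$ with $c_0=C$ must have every coefficient of $F$ vanishing. By the coefficient formula \eqref{eq:key-coefficient-compacta}, the coefficient at order $t^{-O_r-N}$ is affine in $c_N$ with nonzero slope. Its constant term depends only on $c_0,\dots,c_{N-1}$. So induction on $N$ forces $c_N$ to match.

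\emph{Step 3: count.} Injectivity bounds the number of such solutions by the number of distinct nonzero roots of $P_r$ in $\Bbbk$. Write $P_r(C)=C^{\mu}Q(C)$ with $\mu=\operatorname{mult}_0(P_r)$ and $Q(0)\neq0$. The nonzero roots of $P_r$ are exactly the roots of $Q$, and there are at most $\deg Q=\deg P_r-\mu$ of them. Since $|T_r|\ge2$, $P_r$ is a nonzero polynomial with at least two monomials of distinct degrees, so $\deg Q\ge1$ and the bound is meaningful.

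The main obstacle is Step 2, specifically making sure the uniqueness clause is applied to arbitrary formal solutions and within the right coefficient field. The rest is bookkeeping.
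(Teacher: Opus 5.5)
Your proposal is correct and follows the paper's proof. Both define the map $x=1/p\mapsto 1/\operatorname{lc}(p)$, use Proposition~\ref{prop:leading_coeff_root} to land in the nonzero roots of $P_r$, and get injectivity from the uniqueness clause of Theorem~\ref{thm:existence} together with the fact that a rational function is determined by its Laurent expansion at infinity. Your two extra remarks make explicit what the paper leaves implicit: the expansion of $1/p$ is a genuine formal solution via $\Bbbk(t)\hookrightarrow\Bbbk((t^{-1}))$, and the recursion forces $c_N$ for every formal solution with $c_0=C$.
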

	
	\begin{proof}
		Let $x(t)=1/p(t)$ be a rational solution with $\deg p=r$ and $\operatorname{lc}(p)=c$. By Proposition~\ref{prop:leading_coeff_root}, the leading coefficient $1/c$ is a nonzero root of $P_r$.
		
		Assume that two rational solutions $x(t)=1/p(t)$ and $y(t)=1/q(t)$ of degree $r$ satisfy $1/\operatorname{lc}(p)=1/\operatorname{lc}(q)=:C\neq 0$. Then both $x$ and $y$ admit Laurent expansions at infinity whose leading term is $Ct^{-r}$. By the hypothesis on $C$ and Theorem~\ref{thm:existence}, there exists a unique formal Laurent series solution with leading term $Ct^{-r}$. Hence the Laurent expansions of $x$ and $y$ coincide term by term. Since rational functions are uniquely determined by their Laurent expansion at infinity, we conclude that $x(t)\equiv y(t)$.
		
		Consequently, the map $x(t)=1/p(t)\longmapsto 1/\operatorname{lc}(p)$ is injective on the set of rational solutions of degree $r$. Since each such value is a nonzero root of $P_r$, the first claim follows.
		
		Finally, the number of nonzero roots of $P_r$ in $\Bbbk$ is at most $\deg(P_r)-\operatorname{mult}_0(P_r)$, and the result follows.
	\end{proof}
	
	Equality does not hold in general, since the correspondence between nonzero roots of $P_r$ and Laurent expansions yields formal (possibly transcendental) series, whereas rational solutions form a very special subclass of Laurent series, namely those of the form $x(t)=1/p(t)$.
	
	\begin{corollary}\label{cor:descartes}
		Assume $\Bbbk=\mathbb{R}$ and fix an edge-admissible value $r$. Assume that, for every nonzero root $C\in\Bbbk$ of $P_r$, one has $P_r'(C)\neq 0$, and, if $\partial\in T_r$, that $P_r'(C)+m\neq 0$ for every integer $m\ge 1$. Then the number of rational solutions of \eqref{ecu:main} with denominator of degree $r$
		is at most $2(|T_r|-1)$.
	\end{corollary}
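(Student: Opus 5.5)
By Corollary~\ref{cor:main_bound}, whose hypotheses coincide with those assumed here, the number of rational solutions with $\deg p=r$ is at most the number of nonzero real roots of $P_r$. So the task reduces to bounding the nonzero real roots of the edge polynomial $P_r$ by $2(|T_r|-1)$. I will do this with Descartes' rule of signs, applied separately to the positive and to the negative real axis.

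First I note the structure of $P_r$. It is a sum of exactly $|T_r|$ monomials, one for each $\ell\in T_r$. The exponents are $n_\ell$ for $\ell\in T_r\cap\{3,2,1\}$, and $1$ if $\partial\in T_r$. These exponents are pairwise distinct, because $1<n_1<n_2<n_3$. Each coefficient is nonzero: $\alpha_\ell\in\mathbb{R}^\times$, and the coefficient $r$ of the $\partial$-term is positive. Hence $P_r$ has exactly $|T_r|$ nonzero real coefficients, and its coefficient sequence has at most $|T_r|-1$ sign changes.

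Descartes' rule then bounds the positive roots of $P_r$, counted with multiplicity, by $|T_r|-1$. For negative roots I apply the same rule to $P_r(-C)$. This is again a polynomial with the same $|T_r|$ distinct exponents and nonzero coefficients $\pm\alpha_\ell$ (and $-r$). So it also has at most $|T_r|-1$ sign changes, and hence at most $|T_r|-1$ positive roots, which correspond to the negative roots of $P_r$. Adding the two counts gives at most $2(|T_r|-1)$ nonzero real roots.

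Combining this with Corollary~\ref{cor:main_bound} yields the stated bound. There is no serious obstacle. The only point needing care is that the exponents in $P_r$ are distinct, so that no monomials merge or cancel and the number of terms is exactly $|T_r|$. This follows from $n_1>1$ and the strict ordering of the $n_i$. One might hope to sharpen the estimate using parity: for instance, if all exponents have the same parity, the positive and negative root counts coincide. That refinement is not needed for this statement.
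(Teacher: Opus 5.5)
Your proposal is correct and follows the paper's proof: you reduce via Corollary~\ref{cor:main_bound} to counting the nonzero real roots of $P_r$, then apply Descartes' rule of signs to $P_r(C)$ and to $P_r(-C)$, giving at most $|T_r|-1$ positive and $|T_r|-1$ negative roots. Your explicit check that the $|T_r|$ exponents are pairwise distinct (using $n_1>1$) and that all coefficients are nonzero is a detail the paper leaves implicit, but the argument is otherwise the same.
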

	
	\begin{proof}
		By Corollary~\ref{cor:main_bound}, it suffices to bound the number of nonzero real roots of $P_r$. Since $P_r$ has at most four nonzero monomials, Descartes’ rule of signs implies at most $\vert T_r \vert-1$ positive and $\vert T_r \vert-1$ negative roots, hence at most $2 (\vert T_r \vert-1)$ in total.
	\end{proof}
	
	We focus in this work on the nondegenerate case where the relevant nonzero roots of $P_r$ are simple and the recursive construction is non-resonant. This already captures the main features of the problem. In the presence of nonzero multiple roots or resonances, the Newton--Puiseux method developed in \cite{Cano2005NewtonPolygon,Demina2021InvariantCurves} still applies, but the situation becomes considerably more involved, since neither the existence nor the uniqueness of Puiseux series solutions is guaranteed in general.
	
	\section{Conditions for one rational solution}\label{Sect:4}
	
	Let $x=1/p(t)$ be a nonconstant rational solution with $\deg p=r\ge 1$. By Proposition~\ref{prop:condinv} (see also \eqref{ecu:condinv}) we have $p^{\,n_3-n_2}\mid A_3$, hence
	\[
	r = \deg p\ \le\ \frac{a_3}{\,n_3-n_2\,}=: r_0.
	\]
	From \eqref{ecu:condinv} we have $A_3 \;+\; A_2\,p^{\,n_3-n_2} \;+\; A_1\,p^{\,n_3-n_1} \;+\; p^{\,n_3-2}p' \;\equiv\; 0$. Therefore, the degrees in $t$ of the four summands are $D_\ell(p):=a_\ell+(n_3-n_\ell)r,\ \ell\in\{3,2,1,\partial\}$; recall the convention $a_\partial=-1$ and $n_\partial=1$. 
	\begin{definition}\label{def:gamma}
		Given an Abel equation \eqref{ecu:main}, we will define the set
		\begin{equation}\label{ecu:Gamma}
			\Gamma := \{r_{32},r_{31},r_{21},r_{3\partial},r_{2\partial},r_{1\partial}\}\cap\mathbb{N} \cap (0,r_0],    
		\end{equation}
		as the finite set of possible degrees $r = \deg p$ of the denominator of any rational solution, where
		\[
		r_{32}:=\frac{a_3-a_2}{\,n_3-n_2\,},\qquad
		r_{31}:=\frac{a_3-a_1}{\,n_3-n_1\,},\qquad
		r_{21}:=\frac{a_2-a_1}{\,n_2-n_1\,},
		\]
		\[
		r_{3\partial}:=\frac{a_3+1}{\,n_3-1\,},\qquad
		r_{2\partial}:=\frac{a_2+1}{\,n_2-1\,},\qquad
		r_{1\partial}:=\frac{a_1+1}{\,n_1-1\,}.
		\]
	\end{definition}
	Indeed, note that $D_\ell(p)=n_3r-\Phi_\ell(r)$ for every $\ell\in\{3,2,1,\partial\}$. Hence the indices maximizing $D_\ell(p)$ are precisely those in $T_r$, and
	\[
	\max_{\ell\in\{3,2,1,\partial\}} D_\ell(p)=n_3r-O_r.
	\]
	Thus, the six numbers above are precisely the edge-admissible degree candidates.
	
	We now proceed to analyze the possible Newton--diagram configurations arising when equation~\eqref{ecu:main} admits one rational solution, recording first a basic convexity relation among the candidate degrees $r_{32}, r_{31}$ and $r_{21}$.
	
	\begin{lemma}\label{lema:relation}
		The following inequalities hold: $
		\min\{r_{32},\,r_{21}\}
		\ \le\
		r_{31}
		\ \le\
		\max\{r_{32},\,r_{21}\}.$
	\end{lemma}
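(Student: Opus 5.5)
The plan is to recognize $r_{31}$ as a weighted average (mediant) of $r_{32}$ and $r_{21}$. Since $n_1<n_2<n_3$, the denominators $n_3-n_2$ and $n_2-n_1$ are both positive. The first step is to split the numerator and denominator of $r_{31}$ through the intermediate index $2$:
\[
a_3-a_1=(a_3-a_2)+(a_2-a_1),\qquad n_3-n_1=(n_3-n_2)+(n_2-n_1).
\]
Writing $u=n_3-n_2>0$ and $v=n_2-n_1>0$, we get $a_3-a_2=u\,r_{32}$ and $a_2-a_1=v\,r_{21}$. Hence
\[
r_{31}=\frac{u\,r_{32}+v\,r_{21}}{u+v}=\lambda\, r_{32}+(1-\lambda)\,r_{21},\qquad \lambda:=\frac{u}{u+v}\in(0,1).
\]

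The second step is immediate: a convex combination of two real numbers with weight $\lambda\in(0,1)$ lies between their minimum and maximum. This gives $\min\{r_{32},r_{21}\}\le r_{31}\le\max\{r_{32},r_{21}\}$. The argument also shows that equality in either bound holds if and only if $r_{32}=r_{21}$, in which case all three values coincide. Geometrically, these are the three points $Q_1,Q_2,Q_3$ lying on a common line in the Newton diagram, which I would record for later use in the tie configurations.

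There is essentially no obstacle here. The only point needing care is that the strict ordering $1<n_1<n_2<n_3$ guarantees $u,v>0$, so all three slopes are well defined and the weights are positive. Note that the inequality is purely about the rational numbers $r_{ij}$: it does not require them to be integers or to lie in $(0,r_0]$, so no reference to $\Gamma$ is needed.
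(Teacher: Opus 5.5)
Your proof is correct and is essentially the paper's own argument: the paper also writes $r_{31}=\frac{n_3-n_2}{n_3-n_1}\,r_{32}+\frac{n_2-n_1}{n_3-n_1}\,r_{21}$ as a convex combination with positive weights and concludes that it lies between $r_{32}$ and $r_{21}$. Your extra observation, that equality in either bound holds exactly when $r_{32}=r_{21}$ (i.e.\ when $Q_1,Q_2,Q_3$ are collinear), is also correct.
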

	
	\begin{proof}
		Since $n_3>n_2>n_1>1$, we can write $r_{31}$ as a convex combination of $r_{32}$ and $r_{21}$,
		\[
		r_{31}
		=\frac{n_3-n_2}{\,n_3-n_1\,}\,r_{32}
		+\frac{n_2-n_1}{\,n_3-n_1\,}\,r_{21},
		\]
		hence lies between them.
	\end{proof}
	
	For a candidate value $r\in\{r_{32},r_{31},r_{21},r_{3\partial},r_{2\partial},r_{1\partial}\}$, we say that the tie $\{i,j\}$ holds at $r$ if $D_i(p)=D_j(p)$ and this common degree dominates
	all the remaining ones, that is,
	\[
	D_k(p)\le D_i(p)=D_j(p)\qquad\text{for all }k\in\{3,2,1,\partial\}\setminus\{i,j\}.
	\]
	The inequalities below are precisely these conditions rewritten in terms of the numbers $r$, assumed that $r\in\Gamma$.
	
	\begin{table}[h]
		\centering
		\renewcommand{\arraystretch}{1.15}
		\begin{tabular}{|l|p{0.78\textwidth}|}
			\hline
			\textbf{Tie} & \textbf{Necessary order relations (with $\deg p$ fixed to the tie value)} \\
			\hline
			($r_{32}$)
			& $\deg p=r_{32} \in \mathbb{Z}_{> 0}$; $r_{32}\le r_{3\partial}$; $r_{32}\le r_{2\partial}$;
			and, by Lemma~\ref{lema:relation}, $r_{32}\le r_{31}\le r_{21}$.\\[2pt]
			\hline
			($r_{31}$)
			& $\deg p=r_{31} \in \mathbb{Z}_{> 0}$; $r_{31}\le r_{3\partial}$; $r_{31}\le r_{1\partial}$;
			and, by Lemma~\ref{lema:relation}, $r_{21} \le r_{31} \le r_{32}$.\\[2pt]
			\hline
			($r_{21}$)
			& $\deg p=r_{21} \in \mathbb{Z}_{> 0}$; $r_{21}\le r_{2\partial}$; $r_{21}\le r_{1\partial}$; and, by Lemma~\ref{lema:relation}, $r_{32}\le r_{31}\le r_{21}$. \\[2pt]
			\hline
			($r_{3\partial}$)
			& $\deg p=r_{3\partial} \in \mathbb{Z}_{> 0}$; $r_{3\partial}\le r_{32}$; $r_{3\partial}\le r_{31}$;
			$r_{2\partial}\le r_{3\partial}$; $r_{1\partial}\le r_{3\partial}$. \\[2pt]
			\hline
			($r_{2\partial}$)
			& $\deg p=r_{2\partial} \in \mathbb{Z}_{> 0}$; $r_{32}\le r_{2\partial}$; $r_{3\partial}\le r_{2\partial}$;
			$r_{1\partial}\le r_{2\partial}$; $r_{2\partial}\le r_{21}$. \\[2pt]
			\hline
			($r_{1\partial}$)
			& $\deg p=r_{1\partial} \in \mathbb{Z}_{> 0}$; $r_{31}\le r_{1\partial}$; $r_{3\partial}\le r_{1\partial}$;
			$r_{21}\le r_{1\partial}$; $r_{2\partial}\le r_{1\partial}$. \\[2pt]
			\hline
		\end{tabular}
		\caption{Ordering and bounds for the candidate values $r$ of $\deg p$. Each row assumes $\deg p=r$.}
		\label{tabla1}
	\end{table}
	
	We use the notation of Section~\ref{Sect:3} for $\mathcal{N}$, the vertices $Q_\ell$, and the associated objects $T_r$ and $P_r$.
	
	\begin{proposition}\label{prop:cases-norepeat}
		Fix a tie value $r_{ij}\in\{r_{32},r_{31},r_{21},r_{3\partial},r_{2\partial},r_{1\partial}\}$ and assume that the dominance inequalities in Table~\ref{tabla1} hold for this tie. Let $i,j\in\{3,2,1,\partial\}$ be the distinct indices of the chosen tie and set
		\[
		K:=\{3,2,1,\partial\}\setminus\{i,j\}.
		\]
		If $r=r_{ij}$, then:
		\begin{itemize}
			\item[\textup{(1)}] $T_r=\{i,j\}$ if and only if $D_\kappa(p)<D_i(p)=D_j(p)$ for all $\kappa\in K$.
			\item[\textup{(2)}] $T_r=\{i,j,k\}$ for some $k\in K$ if and only if $D_k(p)=D_i(p)=D_j(p)$ and
			$D_\kappa(p)<D_i(p)$ for all $\kappa\in K\setminus\{k\}$.
			\item[\textup{(3)}] $T_r=\{3,2,1,\partial\}$ if and only if all four degrees coincide, i.e.
			$D_3(p)=D_2(p)=D_1(p)=D_\partial(p)$, equivalently
			$r_{32}=r_{31}=r_{21}=r_{3\partial}=r_{2\partial}=r_{1\partial}$.
		\end{itemize}
	\end{proposition}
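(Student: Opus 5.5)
The proof reduces to the identity $D_\ell(p)=n_3r-\Phi_\ell(r)$ recorded after Definition~\ref{def:gamma}. Since $n_3r$ does not depend on $\ell$, the indices maximizing $D_\ell(p)$ are exactly those minimizing $\Phi_\ell(r)$, and these form $T_r$. So $T_r=\{\ell:\ D_\ell(p)=\max_{m}D_m(p)\}$, and all three items become statements about which of the four degrees attain the maximum.

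\textbf{Setup.} Fix $r=r_{ij}$ with the dominance inequalities of Table~\ref{tabla1} in force, so that $D_i(p)=D_j(p)\ge D_\kappa(p)$ for every $\kappa\in K$. The common value $D_i(p)=D_j(p)$ is then the maximum $n_3r-O_r$. Consequently $i,j\in T_r$ always, and a further index $\kappa\in K$ lies in $T_r$ exactly when $D_\kappa(p)=D_i(p)$.

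\textbf{Items (1) and (2).} Since $|K|=2$, $T_r$ is $\{i,j\}$ together with the subset of $K$ on which equality holds. For (1), $T_r=\{i,j\}$ if and only if no $\kappa\in K$ attains the maximum, that is, $D_\kappa(p)<D_i(p)$ for all $\kappa\in K$. For (2), $T_r=\{i,j,k\}$ if and only if exactly one $k\in K$ attains equality and the remaining index of $K$ is strictly smaller.

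\textbf{Item (3).} $T_r=\{3,2,1,\partial\}$ if and only if all four $D_\ell(p)$ coincide. The equivalence with the equality of the six numbers is the step needing care.
\begin{itemize}
\item[$(\Rightarrow)$] Take any pair $\ell\neq m$. From $D_\ell(p)=D_m(p)$ we get $a_\ell-a_m=(n_\ell-n_m)r$. Since the $n_\ell$ are pairwise distinct (with $n_\partial=1<n_1$), we can divide and obtain $r_{\ell m}=r$. Hence all six tie values equal $r$.
\item[$(\Leftarrow)$] If the six numbers coincide, their common value is $r_{ij}=r$. Reversing the computation, $r_{\ell m}=r$ gives $D_\ell(p)=D_m(p)$ for every pair.
\end{itemize}
In the reverse direction one should note that the equalities $D_\ell(p)=D_m(p)$ refer to the degrees at the fixed $r=\deg p$. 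This is consistent because every $r_{\ell m}$ equals that same $r$, so no issue arises.
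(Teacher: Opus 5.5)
Your proposal is correct and follows essentially the same route as the paper: the dominance inequalities give $D_\kappa(p)\le D_i(p)=D_j(p)$, and $T_r$ is exactly the set of indices attaining this maximum. The paper phrases this geometrically, via the supporting line of slope $1/r$; you phrase it algebraically via $D_\ell(p)=n_3r-\Phi_\ell(r)$, and your explicit check in item (3) that the hypothesis $r=r_{ij}$ pins the common value of the six tie numbers to $r=\deg p$ is a slightly more careful version of the paper's collinearity remark.
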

	
	\begin{proof}
		Assume $r=r_{ij}$. By definition of $r_{ij}$, we have $D_i(p)=D_j(p)$. Geometrically, this means that $Q_i$ and $Q_j$ lie on the same supporting line of slope $1/r$ for the Newton polygon: the affine functional $(a,n)\mapsto -a+nr$ takes the same value at $Q_i$ and $Q_j$.
		
		Let $\kappa\in K$. The dominance assumption in Table~\ref{tabla1} for the chosen tie is precisely the condition
		\[
		D_\kappa(p)\le D_i(p)=D_j(p),
		\]
		rewritten in terms of the candidate slopes $r_{\bullet}$, and it guarantees that no vertex in $K$ lies below the supporting line through $Q_i$ and $Q_j$.
		
		(1) If $D_\kappa(p)<D_i(p)$ for all $\kappa\in K$, then no additional vertex $Q_\kappa$ lies on the supporting line through $Q_i$ and $Q_j$. Hence the set of vertices attaining the minimum order is exactly $\{i,j\}$, so $T_r=\{i,j\}$. Conversely, if $T_r=\{i,j\}$, then by definition no other summand can attain the same top degree, i.e.\ $D_\kappa(p)\neq D_i(p)$ for all $\kappa\in K$. Since dominance gives $D_\kappa(p)\le D_i(p)$, it follows that $D_\kappa(p)<D_i(p)$ for all $\kappa\in K$.
		
		(2) Suppose first that $T_r=\{i,j,k\}$ for some $k\in K$. Then, by definition of $T_r$, we have $D_k(p)=D_i(p)=D_j(p)$, and 
		$D_\kappa(p)<D_i(p)\ \ \text{for all }\kappa\in K\setminus\{k\}$. Conversely, assume there exists $k\in K$ such that $D_k(p)=D_i(p)=D_j(p)$ and $D_\kappa(p)<D_i(p)$ for all $\kappa\in K\setminus\{k\}$. Then precisely the vertices $Q_i,Q_j,Q_k$ lie on the supporting line of slope $1/r$, and every other vertex lies strictly above it. Therefore the set of indices attaining the tied top degree is exactly $\{i,j,k\}$, i.e.\ $T_r=\{i,j,k\}$.
		
		(3) Finally, $T_r=\{3,2,1,\partial\}$ holds if and only if all four summands attain the tied top degree, i.e. $
		D_3(p)=D_2(p)=D_1(p)=D_\partial(p)$.
		This is equivalent to the collinearity of the four points $Q_3,Q_2,Q_1,Q_\partial$ on a supporting line of slope $1/r$, which in turn is equivalent to the coincidence of all pairwise slopes: $r_{32}=r_{31}=r_{21}=r_{3\partial}=r_{2\partial}=r_{1\partial}$.
	\end{proof}
	
	Now we analyze each top-degree tie separately. Fix $\deg p=r_{\bullet}$ and list the corresponding
	possibilities for $T_r$ and the associated edge polynomials; the subcases are mutually exclusive and cover all possibilities.
	
	\begin{description}
		\item[Case $(r_{32})$]
		Assume $r_{32}$ is edge–admissible. Then $E_{r_{32}}$ contains $Q_3=(a_3,n_3)$ and $Q_2=(a_2,n_2)$; 
		in the collinearity situations listed in Table~\ref{tabla1} (namely $r_{32}=r_{31}$ and/or $r_{32}=r_{3\partial}$) the edge also contains $Q_1$ and/or $Q_\partial$.
		The four (mutually exclusive) possibilities and the edge polynomials are:
		\begin{enumerate}[(i)]
			\item $T_{r_{32}}=\{3,2\},\, P_{{r_{32}}}(C)=\alpha_3C^{n_3}+\alpha_2C^{n_2}$.
			\item $T_{r_{32}}=\{3,2,1\},\, P_{{r_{32}}}(C)=\alpha_3C^{n_3}+\alpha_2C^{n_2}+\alpha_1C^{n_1}$.
			\item $T_{r_{32}}=\{3,2,\partial\},\, P_{{r_{32}}}(C)=\alpha_3C^{n_3}+\alpha_2C^{n_2}+r_{32}C$.
			\item $T_{r_{32}}=\{3,2,1,\partial\},\, P_{{r_{32}}}(C)=\alpha_3C^{n_3}+\alpha_2C^{n_2}+\alpha_1C^{n_1}+r_{32}C$.
		\end{enumerate}
		\item[Case $(r_{31})$]
		Assume $r_{31}$ is edge–admissible. Then the remaining possibilities are
		\begin{enumerate}[(i)]
			\item $T_{r_{31}}=\{3,1\},\, P_{{r_{31}}}(C)=\alpha_3C^{n_3}+\alpha_1C^{n_1}$.
			\item $T_{r_{31}}=\{3,1,\partial\},\, P_{{r_{31}}}(C)=\alpha_3C^{n_3}+\alpha_1C^{n_1}+r_{31}C$.
		\end{enumerate}
		\item[Case $(r_{21})$]
		Assume $r_{21}$ is edge–admissible. Then the remaining possibilities are
		\begin{enumerate}[(i)]
			\item $T_{r_{21}}=\{2,1\},\, P_{{r_{21}}}(C)=\alpha_2C^{n_2}+\alpha_1C^{n_1}$.
			\item $T_{r_{21}}=\{2,1,\partial\},\,  P_{{r_{21}}}(C)=\alpha_2C^{n_2}+\alpha_1C^{n_1}+r_{21}C$.
		\end{enumerate}
		\item[Case $(r_{3\partial})$]
		Assume $r_{3\partial}$ is edge–admissible. Then the only remaining possibility is:
		\begin{enumerate}[(i)]
			\item $T_{r_{3\partial}}=\{3,\partial\},\, P_{{r_{3\partial}}}(C)=\alpha_3C^{n_3}+r_{3\partial}C$.
		\end{enumerate}
		\item[Case $(r_{2\partial})$]
		Assume $r_{2\partial}$ is edge–admissible. Then the only remaining possibility is:
		\begin{enumerate}[(i)]
			\item $T_{r_{2\partial}}=\{2,\partial\},\, P_{{r_{2\partial}}}(C)=\alpha_2C^{n_2}+r_{2\partial}C$.
		\end{enumerate}
		\item[Case $(r_{1\partial})$]
		Assume $r_{1\partial}$ is edge–admissible. Then the only remaining possibility is:
		\begin{enumerate}[(i)]
			\item $T_{r_{1\partial}}=\{1,\partial\},\, P_{{r_{1\partial}}}(C)=\alpha_1C^{n_1}+r_{1\partial}C$.
		\end{enumerate}
	\end{description}
	
	\subsection*{Nondegeneracy hypothesis} As noted in the introduction, our main results will be stated under the nondegeneracy hypothesis (ND). Having identified the candidate degrees $r\in\Gamma$ and the associated edge polynomials $P_r$, we can now state (ND) precisely. The first two conditions have appeared naturally in the previous section to exclude the resonant case, while the third one will arise in the following results. 
	
	\begin{definition}\label{def:ND}
		We say that equation~\eqref{ecu:main} satisfies the
		nondegeneracy hypothesis, abbreviated by (ND), if for every edge-admissible \(r\in\Gamma\) the following conditions are satisfied. Write $e_r:=\operatorname{mult}_0(P_r)$ and $\widetilde P_r(C):=P_r(C)/C^{e_r}$, so that $\widetilde P_r(0)\neq 0$.
		\begin{itemize}
			\item[(ND1)] The polynomial $\widetilde P_r$ has only simple roots, that is, $\operatorname{Res}_C\!\big(\widetilde P_r(C),\,\widetilde P_r'(C)\big)\neq 0$.
			\item[(ND2)] If $\partial\in T_r$, then for every nonzero root $C\in\Bbbk$ of $P_r$ one has $P_r'(C)\notin \mathbb{Z}_{<0}$.
			\item[(ND3)] There is root-of-unity separation, that is, no two nonzero roots of $P_r$ differ by multiplication by a root of unity other than one:
			\begin{enumerate}[(a)]
				\item If $\Bbbk=\mathbb{C}$, then $\operatorname{Res}_C\!\big(\widetilde P_r(C),\,\widetilde P_r(C/\zeta)\big)\neq 0$ 
				for every $\zeta\in(\mu_{n_3-n_2}\cup\mu_{n_3-n_1}\cup\mu_{n_3-1})\setminus\{1\}$, where $\mu_m:=\{\zeta\in\mathbb{C}^{\times}:\zeta^m=1\}$.
				\item If $\Bbbk=\mathbb{R}$, then the only real root of unity other than one is $\zeta=-1$. Hence we require $
				\operatorname{Res}_C\!\big(\widetilde P_r(C),\,\widetilde P_r(-C)\big)\neq 0$ whenever at least one of $n_3-n_2$, $n_3-n_1$, $n_3-1$ is even. If all three integers are odd, no extra condition is imposed in the real case.
			\end{enumerate}
		\end{itemize}
	\end{definition}
	
	The role of (ND) is the one described in the introduction: conditions (ND1) and
	(ND2) ensure uniqueness in the Newton--Puiseux recursion, while (ND3) rules out
	root-of-unity identifications among possible leading coefficients.
	
	The conditions (ND1) and (ND3) are algebraic, since they are expressed by the
	non-vanishing of finitely many resultants. For a fixed equation, (ND2) is a
	finite check, because each \(P_r\) has only finitely many nonzero roots. As a
	condition on the coefficients of the equation, however, the failure of (ND2) is
	contained in the countable union of algebraic loci
	\[
	\operatorname{Res}_C\!\big(\widetilde P_r(C),\,P_r'(C)+m\big)=0,
	\qquad m\ge 1.
	\]
	Hence the failure of (ND) is contained in a countable union of algebraic
	subvarieties of the corresponding coefficient space.
	
	The following illustrative example in the complex case shows how (ND3,a) specializes to an explicit algebraic exclusion in a concrete binomial subcase.
	
	\begin{example}
		Assume $\Bbbk=\mathbb{C}$ and $T_r=\{3,2\}$, so that
		$P_r(C)=\alpha_3C^{n_3}+\alpha_2C^{n_2}
		= C^{n_2}\big(\alpha_3C^{n_3-n_2}+\alpha_2\big)$ and $
		\widetilde P(C)=\alpha_3C^{n_3-n_2}+\alpha_2$.
		If $n_3-n_2>1$, then the nonzero roots of $\widetilde P$ form a single orbit
		$C_0\,\mu_{n_3-n_2}$. Hence for every
		$\zeta\in\mu_{n_3-n_2}\setminus\{1\}$ the polynomials $\widetilde P(C)$ and $\widetilde P(C/\zeta)$ share a nonzero root and
		\[\operatorname{Res}_C\!\big(\widetilde P(C),\,\widetilde P(C/\zeta)\big)=0,\] so (ND3,a) fails.
		
		If $n_3-n_2=1$, then $\widetilde P(C)=\alpha_3C+\alpha_2$ has a unique (nonzero) root $C_0=-\alpha_2/\alpha_3$. In this case $\widetilde P(C)$ and $\widetilde P(C/\zeta)$ have a common nonzero root only when $\zeta=1$. Thus the binomial subcase $T_r=\{3,2\}$ is compatible with (ND3,a) only when $n_3-n_2=1$.
	\end{example}
	
	Analogous root-of-unity obstructions appear in the remaining subcases. Table~\ref{tab:ND3_exclusions_unified} collects the incompatibilities arising from the root-of-unity part of the nondegeneracy hypothesis, namely (ND3).
	
	\begin{table}[h]
		\centering
		\scriptsize
		\renewcommand{\arraystretch}{1.15}
		\begin{tabular}{|l|l|p{0.62\linewidth}|}
			\hline
			\textbf{Case} & \textbf{Subcase $T_r$} & \textbf{Incompatibility under (ND3)} \\
			\hline
			
			\multirow{2}{*}{$(r_{32})$} & \multirow{2}{*}{$\{3,2\}$}
			& $n_3-n_2 > 1$. \\
			\cline{3-3}
			& & $n_3-n_2$ is even. \\
			\hline
			
			\multirow{2}{*}{$(r_{32})$} & \multirow{2}{*}{$\{3,2,1\}$}
			& $\gcd(n_3-n_1,\;n_2-n_1)>1$. \\
			\cline{3-3}
			& & $n_3-n_1$ and $n_2-n_1$ are both even. \\
			\hline
			
			\multirow{2}{*}{$(r_{32})$} & \multirow{2}{*}{$\{3,2,\partial\}$}
			& $\gcd(n_3-1,\;n_2-1)>1$. \\
			\cline{3-3}
			& &  $n_3$ and $n_2$ are odd. \\
			\hline
			
			\multirow{2}{*}{$(r_{32})$} & \multirow{2}{*}{$\{3,2,1,\partial\}$}
			& $\gcd(n_3-1,\;n_2-1,\;n_1-1)>1$. \\
			\cline{3-3}
			& & $n_1,n_2,n_3$ are all odd. \\
			\hline
			
			\multirow{2}{*}{$(r_{31})$} & \multirow{2}{*}{$\{3,1\}$}
			& $n_3-n_1 > 1$. \\
			\cline{3-3}
			& &  $n_3-n_1$ is even. \\
			\hline
			
			\multirow{2}{*}{$(r_{31})$} & \multirow{2}{*}{$\{3,1,\partial\}$}
			& $\gcd(n_3-1,\;n_1-1)>1$. \\
			\cline{3-3}
			& & $n_3$ and $n_1$ are odd. \\
			\hline
			
			\multirow{2}{*}{$(r_{21})$} & \multirow{2}{*}{$\{2,1\}$}
			& $\gcd(n_2-n_1,\;n_3-n_2)>1$ or
			$\gcd(n_2-n_1,\;n_3-n_1)>1$ or
			$\gcd(n_2-n_1,\;n_3-1)>1$. \\
			\cline{3-3}
			& & $(n_2-n_1)$ is even, whenever at least one of $n_3-n_2$, $n_3-n_1$, $n_3-1$ is even. \\
			\hline
			
			\multirow{2}{*}{$(r_{21})$} & \multirow{2}{*}{$\{2,1,\partial\}$}
			& $\gcd(n_3-n_2,\;n_2-1,\;n_1-1)>1$ or
			$\gcd(n_3-n_1,\;n_2-1,\;n_1-1)>1$ or
			$\gcd(n_3-1,\;n_2-1,\;n_1-1)>1$. \\
			\cline{3-3}
			& & $n_2$ and $n_1$ are odd, whenever at least one of $n_3-n_2$, $n_3-n_1$, $n_3-1$ is even. \\
			\hline
			
			\multirow{2}{*}{$(r_{3\partial})$} & \multirow{2}{*}{$\{3,\partial\}$}
			& always. \\
			\cline{3-3}
			& & $n_3$ is odd. \\
			\hline
			
			\multirow{2}{*}{$(r_{2\partial})$} & \multirow{2}{*}{$\{2,\partial\}$}
			& $\gcd(n_2-1,\;n_3-n_2)>1$ or
			$\gcd(n_2-1,\;n_3-n_1)>1$ or
			$\gcd(n_2-1,\;n_3-1)>1$. \\
			\cline{3-3}
			& & $n_2$ is odd, whenever at least one of $n_3-n_2$, $n_3-n_1$, $n_3-1$ is even. \\
			\hline
			
			\multirow{2}{*}{$(r_{1\partial})$} & \multirow{2}{*}{$\{1,\partial\}$}
			& $\gcd(n_1-1,\;n_3-n_2)>1$ or
			$\gcd(n_1-1,\;n_3-n_1)>1$ or
			$\gcd(n_1-1,\;n_3-1)>1$. \\
			\cline{3-3}
			& & $n_1$ is odd, whenever at least one of $n_3-n_2$, $n_3-n_1$, $n_3-1$ is even. \\
			\hline
			
		\end{tabular}
		\caption{Incompatibilities arising from the root-of-unity part of the nondegeneracy hypothesis, namely (ND3). For each case, the upper row corresponds to the complex setting (ND3,a), and the lower row corresponds to the real setting (ND3,b).}
		\label{tab:ND3_exclusions_unified}
	\end{table}
	
	Additional exclusions may arise from (ND1) and, in the subcases with $\partial\in T_r$, from (ND2); these are not recorded in Table~\ref{tab:ND3_exclusions_unified}, since they depend in general on the actual nonzero roots of $P_r$ and not only on the arithmetic of the exponents.
	
	\begin{remark}\label{rem:ND-binomial}
		Under (ND2), the binomial subcases $T_r=\{i,\partial\}$, $i\in\{1,2,3\}$, cannot be realized by a rational solution. Indeed, in such a case one has
		\[
		P_r(C)=\alpha_i C^{n_i}+rC=C(\alpha_i C^{n_i-1}+r).
		\]
		If $C\neq 0$ is a root of $P_r$, then $\alpha_i C^{n_i-1}=-r$, and therefore
		\[
		P_r'(C)=n_i\alpha_i C^{n_i-1}+r=-(n_i-1)r.
		\]
		Hence $P_r'(C)+(n_i-1)r=0$, contradicting (ND2). Since $C=1/\operatorname{lc}(p)$ is a nonzero root of $P_r$ for every rational solution $x(t)=1/p(t)$ of degree $r$, the claim follows.
	\end{remark}
	
	Combining the classification above with the exclusions imposed by (ND), we obtain the following summary of the possible tie configurations associated with a nonconstant rational solution.
	
	\begin{proposition}\label{prop:one-solution-summary}
		Assume that equation~\eqref{ecu:main} satisfies (ND), and let $x(t)=1/p(t)$ be a nonconstant rational solution. Set $r:=\deg p$. Then $r\in\Gamma$, and the associated tie set $T_r$ must be one of the following:
		\[
		\{3,2\},\ \{3,2,1\},\ \{3,2,\partial\},\ \{3,2,1,\partial\},\ \{3,1\},\ \{3,1,\partial\},\ \{2,1\},\ \{2,1,\partial\}.
		\]
		Moreover, the above possibilities are subject to the arithmetic restrictions listed in Table~\ref{tab:ND3_exclusions_unified}.
	\end{proposition}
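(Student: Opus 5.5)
The proof will combine results already established in Sections~\ref{Sect:3} and~\ref{Sect:4}. Let $x=1/p(t)$ be a nonconstant rational solution with $r=\deg p\ge1$ and $c=\operatorname{lc}(p)$.

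\emph{Step 1: $r\in\Gamma$.} By Proposition~\ref{prop:leading_coeff_root}, $r$ is edge-admissible, so $|T_r|\ge2$. The identity $D_\ell(p)=n_3r-\Phi_\ell(r)$ shows that two of the numbers $\Phi_\ell(r)$ coincide at the minimum. Hence $r$ equals one of $r_{32},r_{31},r_{21},r_{3\partial},r_{2\partial},r_{1\partial}$. Since $r$ is a positive integer and $p^{\,n_3-n_2}\mid A_3$ by \eqref{ecu:condinv}, we also have $r\le r_0$, and therefore $r\in\Gamma$.

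\emph{Step 2: the possible values of $T_r$.} I would read off $T_r$ from the exhaustive case list following Proposition~\ref{prop:cases-norepeat}. That list contains every subset of $\{3,2,1,\partial\}$ with at least two elements that can occur as $T_r$. These are the eight sets in the statement together with the three binomials $\{3,\partial\},\{2,\partial\},\{1,\partial\}$.

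Two facts justify that no other subsets appear. First, $\{3,1,2\}$ and $\{3,2,1,\partial\}$ are already listed under $(r_{32})$. Second, three-element sets such as $\{3,1,\partial\}$ and $\{2,1,\partial\}$ are listed under $(r_{31})$ and $(r_{21})$. Any remaining triple, for instance $\{3,2,\partial\}$, is likewise recorded once, since any triple containing two indices from $\{3,2,1\}$ is covered.

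The one point to check is the triple $\{3,2,1\}$-free set $\{3,\partial\}\cup\{k\}$ with $k\in\{1,2\}$. This is $\{3,2,\partial\}$ or $\{3,1,\partial\}$, both already listed. Similarly $\{2,1,\partial\}$ is listed. So every possible $T_r$ with $|T_r|\ge2$ is either among the eight sets or a binomial $\{i,\partial\}$.

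\emph{Step 3: exclude the binomials $\{i,\partial\}$.} Here I invoke Remark~\ref{rem:ND-binomial}. Its argument runs as follows. The value $C=1/c\neq0$ is a root of $P_r(C)=C(\alpha_iC^{n_i-1}+r)$, which forces $P_r'(C)=-(n_i-1)r\in\mathbb{Z}_{<0}$. This contradicts (ND2), so the binomial cases cannot occur under (ND).

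\emph{Step 4: arithmetic restrictions.} In each surviving subcase, $C=1/c$ is a nonzero root of $P_r$. Hypothesis (ND3) then imposes the resultant conditions, whose failure in terms of the exponents is exactly what Table~\ref{tab:ND3_exclusions_unified} records. This gives the final sentence of the statement.

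The only delicate point is Step 2: confirming that the case enumeration exhausts all tie sets, in particular that no four-element or mixed configuration is missing. I would verify this directly via Lemma~\ref{lema:relation}. If two of the three indices $3,2,1$ lie in $T_r$, then collinearity forces the set to appear under $(r_{32})$ whenever all three do. Otherwise it matches the listed pair, possibly augmented by $\partial$.
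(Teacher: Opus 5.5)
Your proposal is correct and follows the same route as the paper's proof. You get $r\in\Gamma$ from edge-admissibility together with $p^{\,n_3-n_2}\mid A_3$, read $T_r$ off the case list, exclude the binomials $\{i,\partial\}$ through Remark~\ref{rem:ND-binomial} and (ND2), and cite Table~\ref{tab:ND3_exclusions_unified} for (ND3). The worry in your Step 2 is unnecessary: the eight listed sets together with $\{3,\partial\},\{2,\partial\},\{1,\partial\}$ are exactly all eleven subsets of $\{3,2,1,\partial\}$ with at least two elements, so exhaustiveness is a trivial count and Lemma~\ref{lema:relation} is not needed.
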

	
	\begin{proof}
		By \eqref{ecu:Gamma}, one has $r\in\Gamma$.  Proposition~\ref{prop:cases-norepeat} together with the subsequent case-by-case classification gives all possible tie configurations attached to an admissible degree. Under (ND2), Remark~\ref{rem:ND-binomial} excludes the pure binomial subcases $\{3,\partial\}$, $\{2,\partial\}$, and $\{1,\partial\}$. Finally, Table~\ref{tab:ND3_exclusions_unified} records the additional arithmetic restrictions imposed by (ND3) on the remaining configurations.
	\end{proof}

	\section{Conditions for two rational solutions and beyond}\label{Sect:5}
	
	Throughout this section we keep the notation and abbreviations introduced in Sections~\ref{Sect:3}--\ref{Sect:4}
	(in particular $r_{32}, r_{31}, r_{21}, r_{3\partial}, r_{2\partial}, r_{1\partial}, r_0$ and the finite set $\Gamma$),
	and we work under the nondegeneracy hypothesis (ND) from Definition~\ref{def:ND}.
	
	\begin{remark}\label{rem:ND-uniform}
		We work under (ND) as stated in Definition~\ref{def:ND}. Whenever the argument concerns root-of-unity ratios between leading coefficients, we invoke (ND3,a) if $\Bbbk=\mathbb{C}$ and (ND3,b) if $\Bbbk=\mathbb{R}$. In the real case, since $\operatorname{lc}(p_j)\in\mathbb{R}^\times$, any root-of-unity ratio between leading coefficients must be $\zeta=\pm1$. Moreover, by Corollary~\ref{cor:main_bound}, for fixed degree $r$ the map $x(t)=1/p(t)\mapsto 1/\operatorname{lc}(p)$ is injective among rational solutions, so if $\deg(p_1)=\deg(p_2)$ and $x_1\neq x_2$ then necessarily $\operatorname{lc}(p_1)\neq \operatorname{lc}(p_2)$.
	\end{remark}
	
	The main aim of this section is to prove the following result which can be viewed as a Newton-diagram rigidity statement for \eqref{ecu:main}: it translates the admissible tie configurations from Section~\ref{Sect:4} into the only possible patterns for the maximal realized denominator degree when two distinct rational solutions coexist.
	
	\begin{theorem}\label{thm:two-rational-solutions}
		Assume that equation~\eqref{ecu:main} satisfies {\rm (ND)}, and let
		$x_1(t)=1/p_1(t)$ and $x_2(t)=1/p_2(t)$ be two distinct nonconstant rational solutions.
		Relabel, if necessary, so that $\deg p_2\le \deg p_1$, and set $d:=\deg p_1$.
		Then exactly one of the following holds:
		\begin{enumerate}[(C1)]
			\item $a_1<(n_1-1)d-1$ and $a_2=(n_2-1)d-1$.
			In this case necessarily $a_3=(n_3-1)d-1$, hence
			$d=r_{32}=r_{2\partial}=r_{3\partial}$, $T_d=\{3,2,\partial\}$, and $\deg p_2=d$.
			\item $a_1>(n_1-1)d-1$ and $a_2=a_1+(n_2-n_1)d$.
			Then $d=r_{21}$, and exactly one of the following occurs:
			\begin{enumerate}
				\item If $(n_3-n_2)d\le a_3<a_1+(n_3-n_1)d$, then $T_d=\{2,1\}$ and $\deg p_2=r_{32}<d$.
				\item If $a_3=a_1+(n_3-n_1)d$, then $d=r_{32}=r_{31}=r_{21}$, $T_d=\{3,2,1\}$, and $\deg p_2=d$.
			\end{enumerate}
			\item $a_1=(n_1-1)d-1$ and $a_2\le (n_2-1)d-1$.
			Then $d=r_{1\partial}$, and exactly one of the following occurs:
			\begin{enumerate}
				\item If $a_2=(n_2-1)d-1$ and $a_3<(n_3-1)d-1$, then
				$d=r_{21}=r_{2\partial}=r_{1\partial}$, $T_d=\{2,1,\partial\}$, and
				$r_{32}\le \deg p_2\le d\le r_0$.
				\item If $a_2<(n_2-1)d-1$ and $a_3=(n_3-1)d-1$, then
				$d=r_{31}=r_{3\partial}=r_{1\partial}$, $T_d=\{3,1,\partial\}$, and $\deg p_2=d$.
				\item If $a_2=(n_2-1)d-1$ and $a_3=(n_3-1)d-1$, then
				$d=r_{32}=r_{31}=r_{21}=r_{3\partial}=r_{2\partial}=r_{1\partial}$,
				$T_d=\{3,2,1,\partial\}$, and $\deg p_2=d$.
			\end{enumerate}
		\end{enumerate}
		In particular, if none of {\rm (C1)}--{\rm (C3)} holds, then equation \eqref{ecu:main} admits no pair of distinct nonconstant rational solutions $x_1=1/p_1(t)$, $x_2=1/p_2(t)$ with $\deg p_2\le \deg p_1$.
	\end{theorem}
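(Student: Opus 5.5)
The plan is to eliminate \(A_3\) between the two solution identities and read off a three-term degree balance. By \eqref{ecu:condinv}, each \(p_j\) satisfies
\[
A_3+A_2\,p_j^{\,n_3-n_2}+A_1\,p_j^{\,n_3-n_1}+\tfrac{1}{n_3-1}\bigl(p_j^{\,n_3-1}\bigr)'\equiv 0,\qquad j=1,2 .
\]
Subtracting the two identities gives
\[
A_2\bigl(p_1^{\,n_3-n_2}-p_2^{\,n_3-n_2}\bigr)+A_1\bigl(p_1^{\,n_3-n_1}-p_2^{\,n_3-n_1}\bigr)+\tfrac{1}{n_3-1}\bigl(p_1^{\,n_3-1}-p_2^{\,n_3-1}\bigr)'\equiv 0. \tag{$\ast$}
\]

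The first key step is to show that each bracket has the maximal possible degree, namely \(kd\) for the differences of \(k\)-th powers and \((n_3-1)d-1\) for the derivative term.
\begin{itemize}
\item If \(\deg p_2<d\), this is immediate.
\item If \(\deg p_2=d\), the leading coefficient of \(p_1^k-p_2^k\) is \(\operatorname{lc}(p_1)^k-\operatorname{lc}(p_2)^k\), with \(k\in\{n_3-n_2,\,n_3-n_1,\,n_3-1\}\). It could vanish only if \(\operatorname{lc}(p_1)/\operatorname{lc}(p_2)\in\mu_k\). By Remark~\ref{rem:ND-uniform} this ratio is not \(1\), since the solutions are distinct and of equal degree. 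By (ND3) it cannot be another root of unity, because \(1/\operatorname{lc}(p_j)\) are both nonzero roots of \(P_d\) by Proposition~\ref{prop:leading_coeff_root}. The reason (ND3) uses exactly \(\mu_{n_3-n_2}\cup\mu_{n_3-n_1}\cup\mu_{n_3-1}\) is precisely to cover these three brackets.
\end{itemize}
Hence the three summands of \((\ast)\) have degrees \(a_2+(n_3-n_2)d\), \(a_1+(n_3-n_1)d\) and \((n_3-1)d-1\). Since their sum vanishes, the maximum must be attained at least twice.

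Subtracting \((n_3-1)d\) from each degree, the quantities to compare are \(a_2-(n_2-1)d\), \(a_1-(n_1-1)d\) and \(-1\). A top tie of \(\{2,\partial\}\) with \(1\) strictly lower is (C1). A top tie of \(\{2,1\}\) with \(\partial\) strictly lower is (C2), i.e.\ \(a_2=a_1+(n_2-n_1)d\). A top tie containing \(\{1,\partial\}\), with \(2\) lower or equal, is (C3). These three cases are mutually exclusive and exhaustive, which gives the ``exactly one'' claim and the final sentence of the theorem.

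Next, in each case I determine \(a_3\) and \(T_d\) from the single identity for \(p_1\) and Proposition~\ref{prop:one-solution-summary}. The degrees \(D_2,D_1,D_\partial\) of \(p_1\) are already ordered by the case hypothesis. Since the top of \(D_3,D_2,D_1,D_\partial\) must be attained twice, \(D_3\) cannot exceed the current top. If \(D_3\) is strictly lower, \(T_d\) is the tie set from \((\ast)\). In (C1) that would be \(\{2,\partial\}\), in (C3) it would be \(\{1,\partial\}\); both are excluded by Remark~\ref{rem:ND-binomial}. This forces \(a_3=(n_3-1)d-1\) in (C1), and in (C3) it forces either \(a_2=(n_2-1)d-1\) or \(a_3=(n_3-1)d-1\), giving subcases (a), (b), (c). In (C2) the pair \(\{2,1\}\) is allowed, so \(a_3\le a_1+(n_3-n_1)d\), with equality giving \(T_d=\{3,2,1\}\). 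The lower bound \((n_3-n_2)d\le a_3\) comes from \(p_1^{\,n_3-n_2}\mid A_3\). The identifications of \(d\) with the various \(r_{\bullet}\) then follow directly from the definitions in Definition~\ref{def:gamma}.

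The step I expect to be the main obstacle is pinning down \(\deg p_2\). For this I would use the convexity of the lower Newton diagram of \(Q_{\mathrm{low}}\). The value \(r=\deg p_2\le d\) must be edge-admissible, i.e.\ correspond to a lower edge of slope \(1/r\) with \(1/r\ge 1/d\). By Proposition~\ref{prop:one-solution-summary}, that edge must have one of the allowed tie sets, not a binomial \(\{i,\partial\}\).
\begin{itemize}
\item In (C1), (C2)(b), (C3)(b) and (C3)(c), the edge \(E_d\) already contains \(Q_3\) together with the relevant points. Any steeper edge would have to be a binomial involving \(Q_\partial\) or would contradict convexity with \(r\le r_0\), so \(\deg p_2=d\).
\item In (C2)(a), \(T_d=\{2,1\}\) and \(Q_3\) lies strictly above the line through \(Q_2,Q_1\). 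The only edge with larger slope through \(Q_2\) is \([Q_3,Q_2]\), so \(\deg p_2=r_{32}<d\). Here I must check, via Lemma~\ref{lema:relation}, that no edge involving \(Q_\partial\) or \(Q_1\) can intervene.
\item In (C3)(a), I would only obtain the weaker range \(r_{32}\le\deg p_2\le d\le r_0\), by the same convexity bound together with \(p_2^{\,n_3-n_2}\mid A_3\).
\end{itemize}
I would work these edge-by-edge checks out carefully, since they are where mistakes are likeliest.
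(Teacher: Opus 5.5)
\textbf{Verdict: gap in case (C2)(a), and it cannot be closed.} The claim \(\deg p_2=r_{32}<d\) there is false, and the paper's own proof has the same omission.

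\textbf{What matches the paper.} Your trichotomy from \((\ast)\) is exactly the identity \(A_2=-N/D\) of Lemma~\ref{lem:bridge-C123}, with the same (ND3) leading-coefficient argument on the three brackets. Reading off \(a_3\) and \(T_d\) from the \(p_1\)-identity together with Remark~\ref{rem:ND-binomial} is what Propositions~\ref{prop:C1}--\ref{prop:C3} do. Your convexity argument for \(\deg p_2\) also works whenever \(3\in T_d\). The clean reason there is not anything about binomials with \(Q_\partial\). It is that \(\Phi_\ell(r)-\Phi_3(r)=\Phi_\ell(d)-\Phi_3(d)+(n_3-n_\ell)(d-r)>0\) for all \(\ell\neq 3\) and \(r<d\), so no edge-admissible \(r<d\) exists.

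\textbf{The gap.} In (C2)(a) you only examine edges with \(r<d\) and never rule out \(\deg p_2=d\). That case cannot be ruled out. Two degree-\(d\) solutions with \(T_d=\{2,1\}\) only need \(1/\operatorname{lc}(p_1)\) and \(1/\operatorname{lc}(p_2)\) to be distinct roots of \(\alpha_2C^{n_2-n_1}+\alpha_1\). Such roots differ by some \(\zeta\in\mu_{n_2-n_1}\setminus\{1\}\), while (ND3) only forbids \(\zeta\in\mu_{n_3-n_2}\cup\mu_{n_3-n_1}\cup\mu_{n_3-1}\).

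\textbf{Counterexample.} Take \((n_1,n_2,n_3)=(3,5,6)\) and
\[
A_1=t^4,\qquad A_2=-t^8-2t^7-3t^6-16t^5-6t^3,\qquad A_3=t^4(t^2-1)(2t^3+7t^2+5).
\]
\begin{itemize}
\item Both \(p_1=t^2-t\) and \(p_2=-t^2-t\) satisfy \(p^4p'+A_3+A_2p+A_1p^3\equiv 0\). Hence \(x_1=1/(t^2-t)\) and \(x_2=-1/(t^2+t)\) are distinct solutions with \(\deg p_1=\deg p_2=d=2\).
\item Here \((a_1,a_2,a_3)=(4,8,9)\), so \(a_1=4>3=(n_1-1)d-1\), \(a_2=a_1+(n_2-n_1)d\), and \(2\le a_3=9<10=a_1+(n_3-n_1)d\). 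This is case (C2)(a) with \(T_2=\{2,1\}\), yet \(\deg p_2=2\neq r_{32}=1\).
\item (ND) holds. One has \(\Gamma=\{1,2\}\), with \(T_1=\{3,2\}\), \(\widetilde P_1=2C-1\), and \(T_2=\{2,1\}\), \(\widetilde P_2=1-C^2\). All roots are simple and \(\partial\) lies in no \(T_r\).
\item For (ND3), the only root ratio is \(-1\notin\mu_1\cup\mu_3\cup\mu_5\). Condition (ND3,b) imposes nothing, since \(1,3,5\) are all odd.
\end{itemize}

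\textbf{What can actually be proved.} Your convexity argument does establish the correct weaker conclusion \(\deg p_2\in\{r_{32},d\}\) in (C2)(a). The paper's proof of Proposition~\ref{prop:C2} has the identical hole: it eliminates \(r_{1\partial}\), \(r_{2\partial}\), \(r_{3\partial}\) and \(r_{31}\), but never the candidate \(r_{21}=d\) itself.
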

	
	For clarity, the theorem is stated directly in a form compatible with {\rm (ND)}.  The trichotomy follows from a degree comparison, while the exclusion of the pure binomial subcases is already encoded via Proposition~\ref{prop:one-solution-summary} (equivalently, Remark~\ref{rem:ND-binomial}).
	
	Before proving Theorem~\ref{thm:two-rational-solutions}, we establish a few auxiliary results that will be repeatedly used in the argument. In particular, we show how two distinct rational solutions impose coupled polynomial identities that constrain the coefficients of the equation and allow us to determine $A_2$ and $A_3$ in terms of $p_1,p_2$ (and $A_1$).
	
	Observe that if $x_j=1/p_j(t)$, $j\in\{1,2\}$, are two distinct nonconstant rational solutions of \eqref{ecu:main} with $p_1,p_2\in\Bbbk[t]$, then, from \eqref{ecu:condinv}, for each $j\in\{1,2\}$,
	\begin{equation}\label{ecu:condinv2}
		-\,p_j(t)^{\,n_3-2}\,p_j'(t) \;=\; A_3(t) \;+\; A_2(t)\,p_j(t)^{\,n_3-n_2} \;+\; A_1(t)\,p_j(t)^{\,n_3-n_1}.
	\end{equation}
	
	\begin{proposition}\label{prop:A2A3}
		Assume (ND3). If $x_j=1/p_j(t)$, $j=1,2$, are two distinct rational solutions of \eqref{ecu:main}, then
		$p_1^{\,n_3-n_2}-p_2^{\,n_3-n_2}\not\equiv 0$, and $A_2$ and $A_3$ are uniquely determined by $p_1,p_2$ (and $A_1$) and satisfy
		\begin{align}
			A_2
			&=
			-\frac{\frac{1}{n_3-1}\, \big(p_1^{\,n_3-1}-p_2^{\,n_3-1}\big)' \;+\; A_1\,\big(p_1^{\,n_3-n_1}-p_2^{\,n_3-n_1}\big)}{\,p_1^{\,n_3-n_2}-p_2^{\,n_3-n_2}\,},\label{eq:A2-from-direct-prop}\\[2mm]
			A_3
			&=
			\frac{\,\frac{1}{n_2-1}\,p_1^{\,n_3-n_2}\,p_2^{\,n_3-n_2}\,\big(p_1^{\,n_2-1}-p_2^{\,n_2-1}\big)'\ +\ A_1\,p_1^{\,n_3-n_2}\,p_2^{\,n_3-n_2}\,\big(p_1^{\,n_2-n_1}-p_2^{\,n_2-n_1}\big)\,}{\,p_1^{\,n_3-n_2}-p_2^{\,n_3-n_2}\,}.\label{eq:A3-from-direct-prop}
		\end{align}
	\end{proposition}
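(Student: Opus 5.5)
The plan is to view \eqref{ecu:condinv2} for $j=1,2$ as a linear system in the unknowns $A_2,A_3$, with $A_1$ treated as given. Writing $u_j:=p_j^{\,n_3-n_2}$, the system is
\[
A_3+u_jA_2=-p_j^{\,n_3-2}p_j'-A_1p_j^{\,n_3-n_1},\qquad j=1,2 .
\]
Its determinant is $u_2-u_1$. So the first task is to show that $p_1^{\,n_3-n_2}\neq p_2^{\,n_3-n_2}$. Once that is done, Cramer's rule over $\Bbbk(t)$ gives uniqueness, and the rest is simplifying the resulting formulas.

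\textbf{Nonvanishing of the determinant.} Suppose $p_1^{\,n_3-n_2}\equiv p_2^{\,n_3-n_2}$. Then $p_1=\zeta p_2$ for some $\zeta\in\mu_{n_3-n_2}$ (in $\Bbbk$, so $\zeta=\pm1$ when $\Bbbk=\mathbb R$). Since $x_1\neq x_2$, we have $\zeta\neq1$. In particular $\deg p_1=\deg p_2=:r$, and the leading coefficients satisfy $\operatorname{lc}(p_1)=\zeta\operatorname{lc}(p_2)$. By Proposition~\ref{prop:leading_coeff_root}, both $C_1=1/\operatorname{lc}(p_1)$ and $C_2=1/\operatorname{lc}(p_2)=\zeta C_1$ are nonzero roots of $P_r$, hence roots of $\widetilde P_r$.

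It follows that $C_2$ is a common root of $\widetilde P_r(C)$ and $\widetilde P_r(C/\zeta)$, because $C_2/\zeta=C_1$. This contradicts (ND3,a) in the complex case. In the real case $\zeta=-1$, so $n_3-n_2$ is even and (ND3,b) applies, again giving a contradiction.

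One caveat: (ND3) is stated for edge-admissible $r\in\Gamma$. This holds by Proposition~\ref{prop:leading_coeff_root} together with the bound $r\le r_0$.

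\textbf{Formula for $A_2$.} Subtract the two identities to eliminate $A_3$:
\[
A_2\,(p_1^{\,n_3-n_2}-p_2^{\,n_3-n_2}) = -\big(p_1^{\,n_3-2}p_1'-p_2^{\,n_3-2}p_2'\big)-A_1\big(p_1^{\,n_3-n_1}-p_2^{\,n_3-n_1}\big).
\]
Then use $p^{\,n_3-2}p'=\frac{1}{n_3-1}(p^{\,n_3-1})'$ and divide by the nonzero determinant. This yields \eqref{eq:A2-from-direct-prop}.

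\textbf{Formula for $A_3$.} Eliminate $A_2$ instead. Multiply the $j=1$ identity by $u_2$ and the $j=2$ identity by $u_1$, then subtract:
\[
A_3(u_2-u_1)=u_1u_2\Big[\big(p_1^{\,n_2-2}p_1'-p_2^{\,n_2-2}p_2'\big)+A_1\big(p_1^{\,n_2-n_1}-p_2^{\,n_2-n_1}\big)\Big]\cdot(-1).
\]
This uses $p_j^{\,n_3-2}=u_j\,p_j^{\,n_2-2}$ and $p_j^{\,n_3-n_1}=u_j\,p_j^{\,n_2-n_1}$, so that $u_2p_1^{\,n_3-2}p_1' - u_1 p_2^{\,n_3-2}p_2' = u_1u_2\,(p_1^{\,n_2-2}p_1'-p_2^{\,n_2-2}p_2')$. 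Dividing by $u_2-u_1=-(u_1-u_2)$ cancels the sign. Writing $p^{\,n_2-2}p'=\frac{1}{n_2-1}(p^{\,n_2-1})'$ then gives \eqref{eq:A3-from-direct-prop}.

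\textbf{Main obstacle.} The only nontrivial point is the determinant step: identifying equal $(n_3-n_2)$-th powers with a root-of-unity ratio of leading coefficients, and checking that the (ND3) resultant condition really applies at the degree $r$ in question. The remaining steps are bookkeeping of signs in the elimination.
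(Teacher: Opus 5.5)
Your proposal is correct and follows essentially the same route as the paper: you rule out $p_1^{\,n_3-n_2}\equiv p_2^{\,n_3-n_2}$ via $p_1=\zeta p_2$ and (ND3), then solve the $2\times 2$ system in $(A_3,A_2)$ by Cramer's rule and rewrite $p^{\,m-2}p'=\frac{1}{m-1}(p^{\,m-1})'$. Your version is slightly more complete than the paper's, which simply asserts that $\zeta\neq 1$ is ``ruled out by (ND3)''. You spell out the actual link to the resultant condition: $1/\operatorname{lc}(p_1)$ and $1/\operatorname{lc}(p_2)$ are nonzero roots of $\widetilde P_r$ at an edge-admissible $r\in\Gamma$, and they differ by the factor $\zeta$.
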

	
	\begin{proof}
		Since $x_1\neq x_2$, we have $p_1\not\equiv p_2$. Assume, by contradiction, that $p_1^{\,n_3-n_2}-p_2^{\,n_3-n_2}\equiv 0$. Then $(p_2/p_1)^{n_3-n_2}=1$ in $\Bbbk(t)$, so $p_2=\zeta\,p_1$ for some root of unity $\zeta\in\Bbbk^\times$ satisfying $\zeta^{\,n_3-n_2}=1$. If $\Bbbk=\mathbb{C}$, then either $\zeta=1$, impossible since $x_1\neq x_2$, or $\zeta\in\mu_{n_3-n_2}\setminus\{1\}$, which is ruled out by (ND3,a). If $\Bbbk=\mathbb{R}$, then necessarily $\zeta=\pm1$; the case $\zeta=1$ is impossible since $x_1\neq x_2$, while $\zeta=-1$ would force $n_3-n_2$ even and is excluded by (ND3,b). Therefore $p_1^{\,n_3-n_2}-p_2^{\,n_3-n_2}\not\equiv 0$.
		
		Now, from \eqref{ecu:condinv2} for $j=1,2$, we obtain a linear $2\times 2$ system in the unknowns $(A_3,A_2)$ with coefficient matrix
		\[
		\begin{pmatrix} 1 & p_1^{\,n_3-n_2}\\[1pt] 1 & p_2^{\,n_3-n_2}\end{pmatrix},
		\]
		whose determinant is $p_2^{\,n_3-n_2}-p_1^{\,n_3-n_2}\not\equiv 0$. Hence $(A_3,A_2)$ is uniquely determined.
		By Cramer's rule,
		\[
		A_2 \;=\; -\,\frac{\big(p_1^{\,n_3-2}p_1' - p_2^{\,n_3-2}p_2'\big) + A_1\big(p_1^{\,n_3-n_1}-p_2^{\,n_3-n_1}\big)}{\,p_1^{\,n_3-n_2}-p_2^{\,n_3-n_2}\,}
		\]
		and
		\[
		A_3 \;=\; \frac{\,p_1^{\,n_3-n_2}p_2^{\,n_3-n_2}\,\big(p_1^{\,n_2-2}p_1' - p_2^{\,n_2-2}p_2'\big) \;+\;
			A_1\,p_1^{\,n_3-n_2}p_2^{\,n_3-n_2}\,\big(p_1^{\,n_2-n_1}-p_2^{\,n_2-n_1}\big)}{\,p_1^{\,n_3-n_2}-p_2^{\,n_3-n_2}\,}.
		\]
		Using $p_i^{\,n_3-2}p_i'=\frac{1}{n_3-1}\left(p_i^{\,n_3-1}\right)'$ and
		$p_i^{\,n_2-2}p_i'=\frac{1}{n_2-1}\left(p_i^{\,n_2-1}\right)'$, and rearranging, we obtain
		\eqref{eq:A2-from-direct-prop} and \eqref{eq:A3-from-direct-prop}.
	\end{proof}
	
	\begin{corollary}\label{cor:A2-A3}
		Assume (ND). Let $x_j=1/p_j(t)$, $j=1,2$, be two distinct nonconstant rational solutions of \eqref{ecu:main},
		and relabel if necessary so that $\deg(p_2)\le \deg(p_1)$.
		Then
		\begin{equation}\label{eq:degA2-bound}
			\deg(A_2)\ \le\
			\max\Big\{ (n_2-1)\deg(p_1)-1,\ \deg(A_1)+(n_2-n_1)\deg(p_1)\Big\},
		\end{equation}
		and
		\begin{equation}\label{eq:degA3-bound}
			\deg(A_3) - (n_3-n_2)\,\deg(p_2)\ \le\
			\max\Big\{ (n_2-1)\deg(p_1)-1,\ \deg(A_1)+(n_2-n_1)\deg(p_1)\Big\}.
		\end{equation}
	\end{corollary}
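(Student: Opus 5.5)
The plan is to avoid the $A_2$ formula of Proposition~\ref{prop:A2A3} and use instead a ``divided'' form of \eqref{ecu:condinv2}. Here $\deg$ of a rational function means degree of numerator minus degree of denominator. Set $m:=n_3-n_2$ and
\[
B:=\max\{(n_2-1)\deg p_1-1,\ \deg A_1+(n_2-n_1)\deg p_1\}.
\]
Dividing \eqref{ecu:condinv2} by $p_j^{m}$ gives, for $j=1,2$,
\[
\frac{A_3}{p_j^{m}}+A_2+A_1p_j^{\,n_2-n_1}+p_j^{\,n_2-2}p_j'=0 .
\]
Subtracting the two identities eliminates $A_2$:
\[
A_3\,\frac{p_2^{m}-p_1^{m}}{p_1^{m}p_2^{m}}=-N,\qquad N:=A_1\big(p_1^{\,n_2-n_1}-p_2^{\,n_2-n_1}\big)+\big(p_1^{\,n_2-2}p_1'-p_2^{\,n_2-2}p_2'\big).
\]
This is the identity behind \eqref{eq:A3-from-direct-prop}. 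Since $\deg p_2\le\deg p_1$, each summand of $N$ has degree at most $B$, so $\deg N\le B$.

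\textbf{Key step.} I will show that $\deg(p_1^{m}-p_2^{m})=m\deg p_1$, i.e.\ the leading terms do not cancel.
\begin{itemize}
\item If $\deg p_2<\deg p_1$, this is immediate.
\item If $\deg p_2=\deg p_1=:r$, write $c_j=\operatorname{lc}(p_j)$. Cancellation would mean $(c_2/c_1)^m=1$, so $\zeta:=c_1/c_2$ is an $m$-th root of unity. By Remark~\ref{rem:ND-uniform} (injectivity from Corollary~\ref{cor:main_bound}), $c_1\ne c_2$, so $\zeta\neq1$.
\item By Proposition~\ref{prop:leading_coeff_root}, both $1/c_1$ and $1/c_2=\zeta/c_1$ are nonzero roots of $P_r$. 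Two nonzero roots differing by a root of unity $\zeta\in\mu_{n_3-n_2}\setminus\{1\}$ are excluded by (ND3,a). Over $\mathbb R$ the only option is $\zeta=-1$ with $m$ even, which is excluded by (ND3,b).
\end{itemize}
This use of (ND3) is the only delicate point and the main obstacle; everything else is degree bookkeeping.

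\textbf{Conclusion.} From the identity, $A_3=-N\,p_1^{m}p_2^{m}/(p_2^{m}-p_1^{m})$. Hence
\[
\deg A_3=\deg N+m\deg p_1+m\deg p_2-m\deg p_1\le B+m\deg p_2,
\]
which is \eqref{eq:degA3-bound}. (If $N\equiv0$, then $A_3\equiv0$, which contradicts the standing assumption that the $A_i$ are nonzero.) For \eqref{eq:degA2-bound}, take the divided identity with $j=1$:
\[
A_2=-\frac{A_3}{p_1^{m}}-A_1p_1^{\,n_2-n_1}-p_1^{\,n_2-2}p_1'.
\]
The last two terms have degree at most $B$. For the first,
\[
\deg\big(A_3/p_1^{m}\big)=\deg A_3-m\deg p_1\le \deg A_3-m\deg p_2\le B .
\]
The degree of a sum of rational functions is at most the maximum of the degrees, and $A_2$ is a polynomial, so $\deg A_2\le B$.
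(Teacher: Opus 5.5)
Your proof is correct and is essentially the paper's argument. The crux in both is the same (ND3)-plus-injectivity step showing $\deg\bigl(p_1^{\,n_3-n_2}-p_2^{\,n_3-n_2}\bigr)=(n_3-n_2)\deg p_1$, followed by degree bookkeeping in the identities obtained by eliminating one coefficient between the two instances of \eqref{ecu:condinv2}. The only differences are minor: the paper bounds $\deg A_2$ directly from \eqref{eq:A2-from-direct-prop} and handles $A_3$ ``analogously'', whereas you prove the $A_3$ bound first and deduce the $A_2$ bound from it via the divided identity for $j=1$. You also make the appeal to (ND3) explicit by passing through Proposition~\ref{prop:leading_coeff_root}, where the paper simply asserts the contradiction.
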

	
	\begin{proof}
		Let $d:=\deg(p_1)$ and assume $\deg(p_2)\le d$. Set $D:=p_1^{\,n_3-n_2}-p_2^{\,n_3-n_2}$.
		By Proposition~\ref{prop:A2A3}, $D\not\equiv 0$. Moreover, if $\deg(p_2)<d$, then $\deg(D)=(n_3-n_2)d$.
		If $\deg(p_2)=d$ and the leading term of $D$ vanished, then
		$\operatorname{lc}(p_1)^{\,n_3-n_2}=\operatorname{lc}(p_2)^{\,n_3-n_2}$, so $\operatorname{lc}(p_2)=\zeta\,\operatorname{lc}(p_1)$ for some $\zeta$ with $\zeta^{\,n_3-n_2}=1$.
		If $\zeta\neq 1$, this contradicts (ND3); if $\zeta=1$, then $\operatorname{lc}(p_1)=\operatorname{lc}(p_2)$
		and Remark~\ref{rem:ND-uniform} forces $x_1\equiv x_2$, a contradiction. Therefore $\deg(D)=(n_3-n_2)d$.
		
		From \eqref{eq:A2-from-direct-prop} write $A_2=-N_2/D$ with
		$N_2:=\frac{1}{n_3-1}(p_1^{\,n_3-1}-p_2^{\,n_3-1})' + A_1(p_1^{\,n_3-n_1}-p_2^{\,n_3-n_1})$.
		Since $\deg(p_2)\le d$, one has $\deg(p_1^m-p_2^m)\le md$ for all $m\in\mathbb N$, hence
		$\deg((p_1^{n_3-1}-p_2^{n_3-1})')\le (n_3-1)d-1$ and
		$\deg(A_1(p_1^{n_3-n_1}-p_2^{n_3-n_1}))\le \deg(A_1)+(n_3-n_1)d$.
		Therefore
		$\deg(N_2)\le \max\{(n_3-1)d-1,\ \deg(A_1)+(n_3-n_1)d\}$, and since $A_2\in\Bbbk[t]$ we get
		\[
		\deg(A_2)=\deg(N_2)-\deg(D)\le \max\{(n_3-1)d-1,\ \deg(A_1)+(n_3-n_1)d\}-(n_3-n_2)d,
		\]
		which simplifies to \eqref{eq:degA2-bound}.
		
		Following an analogous reasoning but with \eqref{eq:A3-from-direct-prop}, one gets \eqref{eq:degA3-bound}.
	\end{proof}
	
	\begin{lemma}\label{lem:bridge-C123}
		Assume (ND). Let $x_1=1/p_1(t)$ and $x_2=1/p_2(t)$ be two distinct nonconstant rational solutions of \eqref{ecu:main},
		and relabel if necessary so that $\deg(p_2)\le \deg(p_1)$. Set $d:=\deg(p_1)$. Then exactly one of (C1), (C2), (C3) holds.
	\end{lemma}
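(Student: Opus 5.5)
The plan is to split according to the sign of $a_1-\big((n_1-1)d-1\big)$. These three sign cases are mutually exclusive and exhaustive, so \emph{exactly one} of (C1)--(C3) will hold once we show that each sign forces the corresponding condition on $a_2$. Throughout, write $c_j:=\operatorname{lc}(p_j)$ and $D:=p_1^{\,n_3-n_2}-p_2^{\,n_3-n_2}$.

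From Proposition~\ref{prop:A2A3} we have $A_2 D=-N_2$, where
\[
N_2=\tfrac{1}{n_3-1}\big(p_1^{\,n_3-1}-p_2^{\,n_3-1}\big)'+A_1\big(p_1^{\,n_3-n_1}-p_2^{\,n_3-n_1}\big).
\]
The proof of Corollary~\ref{cor:A2-A3} shows $\deg D=(n_3-n_2)d$. Hence $a_2=\deg N_2-(n_3-n_2)d$, and it suffices to compute $\deg N_2$ exactly. Set $M:=\max\{(n_3-1)d-1,\ a_1+(n_3-n_1)d\}$. We know $\deg N_2\le M$; the task is to show equality whenever the two candidates for $M$ differ.

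\emph{Case $a_1<(n_1-1)d-1$.} Here $M=(n_3-1)d-1$, and the $A_1$-term has degree $<M$. The coefficient of $t^{M}$ in $N_2$ is $d\,(c_1^{\,n_3-1}-c_2^{\,n_3-1})$ if $\deg p_2=d$, and $d\,c_1^{\,n_3-1}$ if $\deg p_2<d$.
\begin{itemize}
\item In the second situation it is clearly nonzero.
\item In the first, vanishing would give $c_2=\zeta c_1$ with $\zeta^{\,n_3-1}=1$. If $\zeta\ne1$ this is excluded by (ND3) (the set $\mu_{n_3-1}$ is included there, and in the real case $\zeta=-1$ requires $n_3-1$ even). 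If $\zeta=1$, the injectivity in Remark~\ref{rem:ND-uniform} forces $x_1=x_2$.
\end{itemize}
Thus $\deg N_2=M$ and $a_2=(n_2-1)d-1$, which is (C1).

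\emph{Case $a_1>(n_1-1)d-1$.} Now $M=a_1+(n_3-n_1)d$ and the derivative term has degree $<M$. The coefficient of $t^M$ is $\alpha_1(c_1^{\,n_3-n_1}-c_2^{\,n_3-n_1})$, or $\alpha_1c_1^{\,n_3-n_1}$ if $\deg p_2<d$. It is nonzero by the same argument, now using $\mu_{n_3-n_1}$ in (ND3). Hence $a_2=a_1+(n_2-n_1)d$, which is (C2).

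\emph{Case $a_1=(n_1-1)d-1$.} Both candidates coincide, $M=(n_3-1)d-1$, and cancellation may occur. We only need the upper bound: \eqref{eq:degA2-bound} gives $a_2\le(n_2-1)d-1$, which is (C3).

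The main point of care is the leading-coefficient nonvanishing when $\deg p_2=d$. This is where (ND3) must be applied with the correct root-of-unity group ($\mu_{n_3-1}$, respectively $\mu_{n_3-n_1}$), rather than only $\mu_{n_3-n_2}$ as in Proposition~\ref{prop:A2A3}, and where Remark~\ref{rem:ND-uniform} disposes of $\zeta=1$. The refined sub-statements of (C1)--(C3) in Theorem~\ref{thm:two-rational-solutions} are not part of this lemma and would be handled afterwards.
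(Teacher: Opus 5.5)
Your proposal is correct and takes essentially the same route as the paper. You use $A_2D=-N$, get $\deg D=(n_3-n_2)d$, and find the degree of the dominant summand of $N$ from its leading coefficient. Cancellation is excluded by (ND3) for the groups $\mu_{n_3-1}$ and $\mu_{n_3-n_1}$, and by the injectivity of Corollary~\ref{cor:main_bound} (Remark~\ref{rem:ND-uniform}) when $\zeta=1$. The case split is then on $a_1$ versus $(n_1-1)d-1$, with only the upper bound needed in the equality case. The one minor difference is that you verify the nonvanishing only for the summand that dominates in each case and cite \eqref{eq:degA2-bound} for (C3), whereas the paper asserts all three exact degrees up front.
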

	
	\begin{proof}
		Set $d:=\deg p_1$, $D:=p_1^{n_3-n_2}-p_2^{n_3-n_2}$, and
		\[
		N:=\frac{1}{n_3-1}\big(p_1^{n_3-1}-p_2^{n_3-1}\big)'
		+ A_1\big(p_1^{n_3-n_1}-p_2^{n_3-n_1}\big).
		\]
		By \eqref{eq:A2-from-direct-prop}, one has $A_2=-N/D$. Arguing as in the proof of Corollary~\ref{cor:A2-A3}, and applying the same leading-coefficient argument to the exponents $n_3-n_2$, $n_3-n_1$, and $n_3-1$, the condition (ND3), together with Corollary~\ref{cor:main_bound} in the case of equal leading coefficients, gives
		\[
		\deg D=(n_3-n_2)d,\quad
		\deg\big(p_1^{n_3-n_1}-p_2^{n_3-n_1}\big)=(n_3-n_1)d,
		\]
		and
		$\deg\big((p_1^{n_3-1}-p_2^{n_3-1})'\big)=(n_3-1)d-1$. Thus the two summands in $N$ have degrees $(n_3-1)d-1$ and $a_1+(n_3-n_1)d$, respectively.
		
		If $a_1<(n_1-1)d-1$, then $\deg N=(n_3-1)d-1$, and therefore
		$a_2=\deg A_2=\deg N-\deg D=(n_2-1)d-1$, so {\rm (C1)} holds. If $a_1>(n_1-1)d-1$, then $\deg N=a_1+(n_3-n_1)d$, and hence
		$a_2=\deg A_2=\deg N-\deg D=a_1+(n_2-n_1)d$, so {\rm (C2)} holds. If $a_1=(n_1-1)d-1$, then $\deg N\le (n_3-1)d-1$, hence $a_2=\deg A_2=\deg N-\deg D\le (n_2-1)d-1$, so {\rm (C3)} holds. The three alternatives are mutually exclusive and exhaustive.
	\end{proof}
	
	We now examine the three alternatives in Lemma~\ref{lem:bridge-C123} separately.
	
	\begin{proposition}[Case {\rm (C1)}]\label{prop:C1}
		Assume (ND). Let $x_1=1/p_1(t)$ and $x_2=1/p_2(t)$ be two distinct nonconstant rational solutions of \eqref{ecu:main}, relabeled so that $\deg p_2\le \deg p_1$. If
		$a_1<(n_1-1)\deg(p_1)-1$ and $a_2=(n_2-1)\deg(p_1)-1$, 
		then necessarily $a_3=(n_3-1)\deg(p_1)-1$. Consequently,
		$\deg(p_1)=r_{32}=r_{2\partial}=r_{3\partial}$ and
		$T_{\deg(p_1)}=\{3,2,\partial\}$; moreover, $\deg(p_2)=\deg(p_1)$.
		
		In particular, cases $(r_{31})$, $(r_{21})$, and $(r_{1\partial})$ cannot occur for any other nonconstant rational solution.
	\end{proposition}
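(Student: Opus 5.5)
Set $d:=\deg p_1$. I would take the hypotheses $a_1<(n_1-1)d-1$ and $a_2=(n_2-1)d-1$, use them to locate the dominant terms in \eqref{ecu:condinv2} for $j=1$, and then read off $a_3$, the tie set $T_d$, and finally $\deg p_2$.

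First compute the degrees $D_\ell(p_1)$:
\[
D_\partial(p_1)=(n_3-1)d-1,\qquad D_2(p_1)=a_2+(n_3-n_2)d=(n_3-1)d-1,
\]
\[
D_1(p_1)=a_1+(n_3-n_1)d<(n_3-1)d-1 .
\]
So in the identity $A_3+A_2p_1^{n_3-n_2}+A_1p_1^{n_3-n_1}+p_1^{n_3-2}p_1'\equiv0$ the $A_1$-term lies strictly below the common degree of the $\partial$- and $2$-terms.

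Suppose $a_3\neq(n_3-1)d-1$. If $a_3>(n_3-1)d-1$, then $A_3$ alone has top degree, which is impossible. So $a_3<(n_3-1)d-1$, and the top coefficient must cancel between the $\partial$- and $2$-terms alone. Then $T_d=\{2,\partial\}$, and with $C=1/\operatorname{lc}(p_1)$ we get $P_d(C)=\alpha_2C^{n_2}+dC=0$. Remark~\ref{rem:ND-binomial} (that is, (ND2) through Proposition~\ref{prop:leading_coeff_root}) excludes this. Hence $a_3=(n_3-1)d-1$.

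It follows that $d=r_{3\partial}=r_{2\partial}$, and also $r_{32}=\frac{a_3-a_2}{n_3-n_2}=d$. Since $D_1(p_1)$ is strictly smaller, $T_d=\{3,2,\partial\}$.

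For $\deg p_2=d$, let $e:=\deg p_2\le d$ and suppose $e<d$. Evaluate the identity for $p_2$. Its summands have degrees $D_3=a_3=(n_3-1)d-1$, $D_2(p_2)=(n_2-1)d-1+(n_3-n_2)e$, $D_\partial(p_2)=(n_3-1)e-1$ and $D_1(p_2)=a_1+(n_3-n_1)e$. Since $e<d$, $D_3>D_2(p_2)$ and $D_3>D_\partial(p_2)$. Also $D_1(p_2)<(n_1-1)d-1+(n_3-n_1)e<(n_3-1)d-1$. So $A_3$ would be strictly dominant, a contradiction. The same reasoning is simply the observation that $e\ge r_{3\partial}$ and $e\ge r_{32}$ are forced, because $p_2^{n_3-n_2}\mid A_3$ only bounds $e$ from above. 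Therefore $e=d$.

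For the final claim, every nonconstant rational solution $1/p$ with $\deg p\le d$ satisfies the same analysis, so its degree is $d$ and its tie set is $\{3,2,\partial\}$. Cases $(r_{31})$, $(r_{21})$ and $(r_{1\partial})$ therefore do not occur. To cover solutions with $\deg p>d$, I would apply the theorem's labelling to $p_1$ of maximal degree; I would phrase the proposition with $d$ the maximal realized degree, which is how Theorem~\ref{thm:two-rational-solutions} is set up.

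The main obstacle is the exclusion of $a_3<(n_3-1)d-1$. It depends on correctly identifying $T_d=\{2,\partial\}$ as the edge at $r=d$ and invoking the binomial exclusion. All other steps are leading-degree bookkeeping.
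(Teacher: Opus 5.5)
Your derivation of $a_3=(n_3-1)d-1$ and $T_d=\{3,2,\partial\}$ is the same as the paper's. Your proof that $\deg p_2=d$ takes a different route and is correct. You show directly that for $e<d$ the $A_3$-term strictly dominates the identity for $p_2$. The paper instead uses $\deg p_2\in\Gamma$, the relations $r_{1\partial}<d<r_{21}$ and $r_{32}=d$, and the dominance inequalities of Table~\ref{tabla1}. Your version is more self-contained and avoids the table lookup.

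The gap is in the final claim. The statement says that ties $(r_{31})$, $(r_{21})$, $(r_{1\partial})$ occur for no other nonconstant rational solution. The labelling only orders $p_1$ and $p_2$, so a priori a further solution $1/p$ with $\deg p>d$ is not excluded. You handle only $\deg p\le d$ and then propose to change the statement. Also, Theorem~\ref{thm:two-rational-solutions} is not formulated with $d$ the maximal realized degree; that normalization appears only in Section~\ref{Sect:6}.

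No restatement is needed, because your own bookkeeping closes the case. If $e:=\deg p>d$, then
\[
D_\partial(p)-D_3(p)=(n_3-1)(e-d),\qquad D_\partial(p)-D_2(p)=(n_2-1)(e-d),\qquad D_\partial(p)-D_1(p)>(n_1-1)(e-d).
\]
All three are positive, so $p^{n_3-2}p'$ alone has top degree, which is a contradiction. Hence every nonconstant rational solution has degree exactly $d$ and tie set $\{3,2,\partial\}$, which proves the final claim and more.

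Alternatively, use the paper's argument, which does not depend on the degree of the solution. The relations $r_{32}=r_{2\partial}=d$ and $r_{1\partial}<d<r_{21}$ violate the dominance conditions in Table~\ref{tabla1}:
\begin{itemize}
\item $r_{21}\le r_{31}\le r_{32}$ for $(r_{31})$;
\item $r_{21}\le r_{2\partial}$ for $(r_{21})$;
\item $r_{2\partial}\le r_{1\partial}$ for $(r_{1\partial})$.
\end{itemize}
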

	
	\begin{proof}
		Let $d:=\deg(p_1)$. Applying \eqref{ecu:condinv} to $p_1$, we get
		$A_3 + A_2p_1^{n_3-n_2} + A_1p_1^{n_3-n_1} + p_1^{n_3-2}p_1' \equiv 0$.
		Under $a_1<(n_1-1)d-1$ and $a_2=(n_2-1)d-1$, one has
		$\deg(A_2p_1^{n_3-n_2})=\deg(p_1^{n_3-2}p_1')=(n_3-1)d-1$, whereas
		$\deg(A_1p_1^{n_3-n_1})=a_1+(n_3-n_1)d<(n_3-1)d-1$.
		Hence the top degree is $(n_3-1)d-1$, and therefore $a_3\le (n_3-1)d-1$.
		
		On the other hand, Proposition~\ref{prop:condinv} gives $p_1^{n_3-n_2}\mid A_3$, so $a_3\ge (n_3-n_2)d$.
		Since $a_2=(n_2-1)d-1$, we have $r_{2\partial}=(a_2+1)/(n_2-1)=d$.
		
		If $a_3<(n_3-1)d-1$, then $D_3(p_1)<D_2(p_1)=D_\partial(p_1)$, and therefore
		$T_d=\{2,\partial\}$ by Proposition~\ref{prop:cases-norepeat}(1). This is impossible under {\rm (ND)} by Proposition~\ref{prop:one-solution-summary} (equivalently, Remark~\ref{rem:ND-binomial}). Hence necessarily
		$a_3=(n_3-1)d-1$.
		
		It follows that $D_3(p_1)=D_2(p_1)=D_\partial(p_1)$, so
		$T_d=\{3,2,\partial\}$ by Proposition~\ref{prop:cases-norepeat}(2), and
		$r_{32}=(a_3-a_2)/(n_3-n_2)=d$. Thus
		$d=r_{32}=r_{2\partial}=r_{3\partial}$.
		
		To determine $\deg(p_2)$, note that $\deg(p_2)\in\Gamma$ by \eqref{ecu:Gamma}. Under the present assumptions,
		$r_{1\partial}<d$, $r_{21}=(a_2-a_1)/(n_2-n_1)>d$, and $r_{32}=d$. Hence Table~\ref{tabla1} excludes the cases $(r_{1\partial})$, $(r_{21})$, and $(r_{31})$ for $p_2$, so $\deg(p_2)\in\{r_{32},d\}=\{d\}$. Therefore $\deg(p_2)=d$.
		
		Finally, the same inequalities $r_{1\partial}<r_{2\partial}=d<r_{21}$ and $r_{21}>r_{32}$ show, via Table~\ref{tabla1}, that cases $(r_{31})$, $(r_{21})$, and $(r_{1\partial})$ cannot occur for any other nonconstant rational solution.
	\end{proof}
	
	\begin{proposition}[Case {\rm (C2)}]\label{prop:C2}
		Assume (ND). Let $x_1=1/p_1(t)$ and $x_2=1/p_2(t)$ be two distinct nonconstant rational solutions of \eqref{ecu:main}, relabeled so that $\deg p_2\le \deg p_1$. Assume that
		$a_1>(n_1-1)\deg(p_1)-1$ and $a_2=a_1+(n_2-n_1)\deg(p_1)$.
		Then $\deg(p_1)=r_{21}$, and exactly one of the following holds:
		\begin{enumerate}
			\item If $(n_3-n_2)\deg(p_1)\le a_3<a_1+(n_3-n_1)\deg(p_1)$, then
			$T_{\deg(p_1)}=\{2,1\}$ and $\deg(p_2)=r_{32}<\deg(p_1)$.
			\item If $a_3=a_1+(n_3-n_1)\deg(p_1)$, then
			$\deg(p_1)=r_{32}=r_{31}=r_{21}$, $T_{\deg(p_1)}=\{3,2,1\}$, and $\deg(p_2)=\deg(p_1)$.
		\end{enumerate}
		In particular, the denominator $p_2$ cannot belong to the cases $(r_{3\partial})$, $(r_{2\partial})$, or $(r_{1\partial})$.
	\end{proposition}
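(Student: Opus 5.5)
The plan mirrors the proof of Proposition~\ref{prop:C1}: we read off the top-degree balance in \eqref{ecu:condinv} for $p_1$, and then use Table~\ref{tabla1} to locate $\deg p_2$. Let $d:=\deg p_1$.

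\textbf{Step 1: the identity $d=r_{21}$.} From $a_2=a_1+(n_2-n_1)d$ we get directly $r_{21}=(a_2-a_1)/(n_2-n_1)=d$. In the identity
\[
A_3+A_2p_1^{n_3-n_2}+A_1p_1^{n_3-n_1}+p_1^{n_3-2}p_1'\equiv0
\]
the degrees are
\[
D_2(p_1)=D_1(p_1)=a_1+(n_3-n_1)d,\qquad D_\partial(p_1)=(n_3-1)d-1 .
\]
The hypothesis $a_1>(n_1-1)d-1$ gives $D_\partial(p_1)<D_1(p_1)$, so $\partial\notin T_d$. Cancellation of the top degree then forces $a_3\le a_1+(n_3-n_1)d$. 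The divisibility $p_1^{n_3-n_2}\mid A_3$ from Proposition~\ref{prop:condinv} gives $a_3\ge (n_3-n_2)d$. So exactly one of the two ranges in the statement occurs, and they are clearly exclusive.

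\textbf{Step 2: the two tie sets.} If $a_3=a_1+(n_3-n_1)d$, all of $D_3,D_2,D_1$ coincide and strictly exceed $D_\partial$. By Proposition~\ref{prop:cases-norepeat}(2), $T_d=\{3,2,1\}$, and one computes $r_{32}=r_{31}=d$. In the strict case, $D_3(p_1)<D_2(p_1)$, so $T_d=\{2,1\}$ by Proposition~\ref{prop:cases-norepeat}(1).

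\textbf{Step 3: locating $\deg p_2$.} Since $\deg p_2\in\Gamma$, we compare the candidates using the inequalities just obtained. In both subcases $a_1>(n_1-1)d-1$ means $r_{1\partial}>d$, and $a_2>(n_2-1)d-1$ (immediate from $a_2=a_1+(n_2-n_1)d$) means $r_{2\partial}>d$.

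\emph{Subcase (1).} Here $a_3<a_1+(n_3-n_1)d$ gives $r_{31}<d=r_{21}$, and then Lemma~\ref{lema:relation} gives $r_{32}\le r_{31}<r_{21}$. We now run through the rows of Table~\ref{tabla1} for $p_2$, with $\deg p_2\le d$:
\begin{itemize}
\item $(r_{31})$ requires $r_{21}\le r_{31}$, which fails.
\item $(r_{21})$ would give $\deg p_2=d$ with the same leading-edge data as $p_1$. This is the delicate point, discussed below.
\item $(r_{1\partial})$ requires $r_{21}\le r_{1\partial}$ with $\deg p_2=r_{1\partial}>d$, which is impossible.
\item $(r_{2\partial})$ requires $r_{2\partial}\le d$, which contradicts $r_{2\partial}>d$.
\item $(r_{3\partial})$ requires $r_{2\partial}\le r_{3\partial}=\deg p_2\le d$, which fails for the same reason.
\end{itemize}

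To exclude $\deg p_2=d$ in subcase (1), I would use the degree computation from the proof of Lemma~\ref{lem:bridge-C123}, now applied to the $A_3$ formula \eqref{eq:A3-from-direct-prop}. When $\deg p_2=d$, (ND3) and distinct leading coefficients give
\[
\deg\bigl(p_1^{m}-p_2^{m}\bigr)=md\quad\text{for } m\in\{n_3-n_2,\,n_3-n_1\}.
\]
Comparing with the $A_2$ identity \eqref{eq:A2-from-direct-prop}, this should force $a_3=a_1+(n_3-n_1)d$, i.e.\ $D_3=D_1$, contradicting subcase (1). Alternatively one can subtract the two identities \eqref{ecu:condinv2}, which eliminates $A_3$; combining the two equations so as to isolate the $A_3$ contribution instead should show that its top degree is exactly $a_1+(n_3-n_1)d$. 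Once $\deg p_2=d$ is excluded, the only remaining possibility is $\deg p_2=r_{32}<d$.

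\emph{Subcase (2).} Here all three slopes equal $d$, and $r_{1\partial},r_{2\partial}>d$. From $a_3=a_1+(n_3-n_1)d$ and $a_1>(n_1-1)d-1$ we get $a_3>(n_3-1)d-1$, so $r_{3\partial}>d$ as well. The rows $(r_{3\partial})$, $(r_{2\partial})$ and $(r_{1\partial})$ of Table~\ref{tabla1} would each require the corresponding value to be $\le r_{32}=d$, which fails. So every admissible candidate equals $d$, and $\deg p_2=d$.

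The final remark in the statement follows from these same $\partial$-inequalities.

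\textbf{Main obstacle.} I expect the hardest step to be excluding $\deg p_2=d$ in subcase (1). It requires carefully extracting the exact top degree of $A_3$ from \eqref{eq:A3-from-direct-prop} under (ND3) and the injectivity of leading coefficients (Remark~\ref{rem:ND-uniform}). The danger is that possible cancellations in the numerator must be ruled out, much as in Corollary~\ref{cor:A2-A3}.
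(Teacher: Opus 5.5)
Your Steps 1--2 and your treatment of subcase (2) coincide with the paper's proof. In subcase (1) you rightly notice that Table~\ref{tabla1} does not rule out the row $(r_{21})$, i.e.\ $\deg p_2=d$. The paper's proof passes over this silently: after discarding $(r_{31})$ and the $\partial$-rows it simply declares $\deg p_2=r_{32}$.

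Your proposed exclusion of $\deg p_2=d$ does not work. If $\deg p_2=d$, both solutions have $T_d=\{2,1\}$. So, with $c_j:=\operatorname{lc}(p_j)$, both $1/c_1$ and $1/c_2$ are roots of $\alpha_2C^{n_2-n_1}+\alpha_1$, hence $c_1^{\,n_2-n_1}=c_2^{\,n_2-n_1}$. Condition (ND3) only forbids ratios in $\mu_{n_3-n_2}\cup\mu_{n_3-n_1}\cup\mu_{n_3-1}$. Hence $c_2/c_1\in\mu_{n_2-n_1}\setminus\{1\}$ is compatible with (ND) whenever $\gcd(n_2-n_1,n_3-n_2)=\gcd(n_2-n_1,n_3-1)=1$. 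In that situation the factor $p_1^{\,n_2-n_1}-p_2^{\,n_2-n_1}$ in \eqref{eq:A3-from-direct-prop} loses its top term. That formula then yields $a_3<a_1+(n_3-n_1)d$, which is precisely subcase (1) and not a contradiction.

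Your alternative route fails for the same reason. In $A_3=-(A_2p_1^{\,n_3-n_2}+A_1p_1^{\,n_3-n_1}+p_1^{\,n_3-2}p_1')$ the coefficient of $t^{a_1+(n_3-n_1)d}$ is
\[
c_1^{\,n_3-n_2}(\alpha_2+\alpha_1c_1^{\,n_2-n_1})=c_1^{\,n_3}P_d(1/c_1)=0,
\]
so the top-degree cancellation is automatic rather than excluded.

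In fact no argument can close this gap, because conclusion (1) is false under (ND). Take $(n_1,n_2,n_3)=(3,5,6)$ and
\[
A_1=t^4,\qquad A_2=-(t^8+2t^7+3t^6+16t^5+6t^3),\qquad A_3=-t^4(t^2-1)(2t^3+7t^2+5).
\]
A direct expansion shows that $p_1=t^2+t$ and $p_2=t-t^2$ both satisfy $A_3+A_2p+A_1p^3+p^4p'\equiv0$. So $x_1=1/p_1$ and $x_2=1/p_2$ are distinct rational solutions with $d=2$.

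Here $(a_1,a_2,a_3)=(4,8,9)$, so:
\begin{itemize}
\item $a_1=4>3=(n_1-1)d-1$;
\item $a_2=8=a_1+(n_2-n_1)d$;
\item $2\le a_3=9<10=a_1+(n_3-n_1)d$.
\end{itemize}
This is (C2), subcase (1), with $T_2=\{2,1\}$ and $r_{32}=1$. Moreover $\Gamma=\{1,2\}$, $T_1=\{3,2\}$, $P_1=-C^5(2C+1)$ and $P_2=C^3(1-C^2)$.

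Condition (ND) holds over both $\mathbb{R}$ and $\mathbb{C}$:
\begin{itemize}
\item no $T_r$ contains $\partial$;
\item all nonzero roots are simple;
\item the only ratio between nonzero roots is $-1\notin\mu_1\cup\mu_3\cup\mu_5$;
\item $n_3-n_2$, $n_3-n_1$ and $n_3-1$ are all odd.
\end{itemize}
Yet $\deg p_2=2\neq r_{32}$.

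What is actually provable in subcase (1) is only $\deg p_2\in\{r_{32},d\}$. This also invalidates the step ``degree $r$ contributes exactly one solution'' used later in Proposition~\ref{prop:global-count-by-cases}(b).
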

	
	\begin{proof}
		Let $d:=\deg(p_1)$. Following the same reasoning of the previous result, we get that the top degree is $a_1+(n_3-n_1)d$, attained by the $A_2$- and $A_1$-terms, and therefore
		$a_3\le a_1+(n_3-n_1)d$.
		
		Moreover, $r_{21}=(a_2-a_1)/(n_2-n_1)=d$. Since $p_1$ is a rational solution, Proposition~\ref{prop:condinv} gives
		$p_1^{n_3-n_2}\mid A_3$, hence $a_3\ge (n_3-n_2)d$.
		
		If $(n_3-n_2)d\le a_3<a_1+(n_3-n_1)d$, then $D_3(p_1)<D_2(p_1)=D_1(p_1)$, so
		$T_d=\{2,1\}$ by Proposition~\ref{prop:cases-norepeat}(1). If instead
		$a_3=a_1+(n_3-n_1)d$, then $D_3(p_1)=D_2(p_1)=D_1(p_1)$, so
		$T_d=\{3,2,1\}$ by Proposition~\ref{prop:cases-norepeat}(2).
		
		Now $\deg(p_2)\in\Gamma$ by \eqref{ecu:Gamma}, and by relabeling $\deg(p_2)\le d$.
		Under the present assumptions,
		$r_{1\partial}=(a_1+1)/(n_1-1)>d$ and $r_{2\partial}=(a_2+1)/(n_2-1)>d$, so $\deg(p_2)$ cannot equal $r_{1\partial}$ or $r_{2\partial}$. Moreover, if $\deg(p_2)=r_{3\partial}$, then Table~\ref{tabla1} for the tie $(r_{3\partial})$ requires $r_{1\partial}\le r_{3\partial}$ and $r_{2\partial}\le r_{3\partial}$, hence $r_{3\partial}>d$, contradicting $\deg(p_2)\le d$. Therefore $p_2$ cannot belong to the cases $(r_{3\partial})$, $(r_{2\partial})$, or $(r_{1\partial})$.
		
		In the first subcase one has $r_{32}=(a_3-a_2)/(n_3-n_2)<d$, since
		$a_3<a_2+(n_3-n_2)d$, while $r_{21}=d$. Table~\ref{tabla1} excludes the case $(r_{31})$ for $p_2$, since it would require $r_{21}\le r_{31}\le r_{32}$, which is impossible when $r_{32}<r_{21}$. Hence the only remaining possibility is $\deg(p_2)=r_{32}<d$.
		
		In the second subcase one has $r_{32}=(a_3-a_2)/(n_3-n_2)=d$, and therefore $\deg(p_2)=d=\deg(p_1)$.
	\end{proof}

	\begin{proposition}[Case {\rm (C3)}]\label{prop:C3}
		Assume (ND). Let $x_1=1/p_1(t)$ and $x_2=1/p_2(t)$ be two distinct nonconstant rational solutions of \eqref{ecu:main}, relabeled so that $\deg p_2\le \deg p_1$. Assume that
		$a_1=(n_1-1)\deg(p_1)-1$ and $a_2\le (n_2-1)\deg(p_1)-1$.
		Then $\deg(p_1)=r_{1\partial}$, and exactly one of the following holds:
		\begin{enumerate}
			\item If $a_2=(n_2-1)\deg(p_1)-1$ and $a_3<(n_3-1)\deg(p_1)-1$, then
			$\deg(p_1)=r_{21}=r_{2\partial}=r_{1\partial}$,
			$T_{\deg(p_1)}=\{2,1,\partial\}$, and
			$r_{32}\le \deg(p_2)\le \deg(p_1)\le r_0$.
			
			\item If $a_2<(n_2-1)\deg(p_1)-1$ and $a_3=(n_3-1)\deg(p_1)-1$, then
			$\deg(p_1)=r_{31}=r_{3\partial}=r_{1\partial}$,
			$T_{\deg(p_1)}=\{3,1,\partial\}$, and
			$\deg(p_2)=\deg(p_1)$.
			
			\item If $a_2=(n_2-1)\deg(p_1)-1$ and $a_3=(n_3-1)\deg(p_1)-1$, then
			$\deg(p_1)=r_{32}=r_{31}=r_{21}=r_{3\partial}=r_{2\partial}=r_{1\partial}$,
			$T_{\deg(p_1)}=\{3,2,1,\partial\}$, and
			$\deg(p_2)=\deg(p_1)$.
		\end{enumerate}
		In particular, one always has $\deg(p_1)=r_{1\partial}$.
	\end{proposition}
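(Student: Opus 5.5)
I will follow the pattern of Propositions~\ref{prop:C1} and \ref{prop:C2}. Let $d:=\deg p_1$. Since $a_1=(n_1-1)d-1$, we have $r_{1\partial}=(a_1+1)/(n_1-1)=d$ immediately, and $D_1(p_1)=a_1+(n_3-n_1)d=(n_3-1)d-1=D_\partial(p_1)$. The hypothesis $a_2\le(n_2-1)d-1$ gives $D_2(p_1)\le(n_3-1)d-1$. Next I apply \eqref{ecu:condinv} to $p_1$, that is, $A_3+A_2p_1^{n_3-n_2}+A_1p_1^{n_3-n_1}+p_1^{n_3-2}p_1'\equiv0$. All summands other than $A_3$ have degree at most $(n_3-1)d-1$, so $a_3\le (n_3-1)d-1$. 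Proposition~\ref{prop:condinv} gives $p_1^{n_3-n_2}\mid A_3$, hence $a_3\ge (n_3-n_2)d$. Consequently every $D_\ell(p_1)\le D_1(p_1)=D_\partial(p_1)$, and $T_d\supseteq\{1,\partial\}$.

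Now I split according to whether $a_2$ and $a_3$ attain their upper bounds. The case $a_2<(n_2-1)d-1$ and $a_3<(n_3-1)d-1$ gives $T_d=\{1,\partial\}$ by Proposition~\ref{prop:cases-norepeat}(1), which is excluded by Remark~\ref{rem:ND-binomial}. This leaves exactly the three listed subcases, and they are mutually exclusive by construction. In subcase (1), Proposition~\ref{prop:cases-norepeat}(2) yields $T_d=\{2,1,\partial\}$, and direct computation gives $r_{2\partial}=d$ and $r_{21}=(a_2-a_1)/(n_2-n_1)=d$. In subcase (2) we get $T_d=\{3,1,\partial\}$ with $r_{3\partial}=r_{31}=d$. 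In subcase (3), Proposition~\ref{prop:cases-norepeat}(3) gives all six values equal to $d$.

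The remaining step, which I expect to be the main obstacle, is locating $\deg p_2$, which lies in $\Gamma$ and satisfies $\deg p_2\le d$. I will use Table~\ref{tabla1} to exclude candidate ties, as in the proof of Proposition~\ref{prop:C2}.

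In subcase (2), $a_3=(n_3-1)d-1$ and $a_2<(n_2-1)d-1$ give $r_{2\partial}<d$ and $r_{32}>d$, while $r_{21}<d$. The ties $(r_{32})$ and $(r_{2\partial})$ would need $r_{32}\le r_{2\partial}$, which fails. The tie $(r_{21})$ requires $r_{21}\le r_{2\partial}$ and $r_{32}\le r_{21}$, which also fails. The ties $(r_{31})$, $(r_{3\partial})$ and $(r_{1\partial})$ all have value $d$. Pure binomial ties are excluded under (ND). So $\deg p_2=d$.

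Subcase (3) is immediate, since $\Gamma\cap(0,d]$ then only contains $d$.

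In subcase (1), $a_3<(n_3-1)d-1$ gives $r_{3\partial}<d$ and $r_{31}<d$, while $r_{32}<d$ as well since $a_3-a_2<(n_3-n_2)d$. Here I only need the bound $\deg p_2\ge r_{32}$. I expect to get it from Corollary~\ref{cor:A2-A3} applied with the roles reversed, or more simply from \eqref{ecu:condinv} for $p_2$. For the latter, the monomial divisibility $p_2^{n_3-n_2}\mid A_3$ alone is not enough. Instead I will use the degree balance for $p_2$. If $\deg p_2<r_{32}$, then $D_3(p_2)>D_2(p_2)$. It remains to check that $D_3(p_2)$ also exceeds $D_1(p_2)$ and $D_\partial(p_2)$, using the convexity relations $r_{31}\ge r_{32}$ (Lemma~\ref{lema:relation} with $r_{32}\le r_{21}=d$) and $r_{3\partial}$ compared via the tie inequalities. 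This would make $A_3$'s term strictly dominant, which is impossible. The bound $d\le r_0$ is just $r\le a_3/(n_3-n_2)$ from Section~\ref{Sect:4}.
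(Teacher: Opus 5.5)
Your proof is correct. Its first half matches the paper's argument: the top degree $(n_3-1)d-1$, $r_{1\partial}=d$, $d\le r_0$, the exclusion of $T_d=\{1,\partial\}$ via Remark~\ref{rem:ND-binomial}, and the identification of $T_d$ and of the coinciding slopes in each subcase.

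The difference is in how you locate $\deg p_2$. The paper uses the two-solution bound \eqref{eq:degA3-bound} of Corollary~\ref{cor:A2-A3} as stated. No reversal of roles is needed, and in general none would be legitimate, since the corollary presupposes $\deg p_2\le\deg p_1$. Because $a_1+(n_2-n_1)d=(n_2-1)d-1$, the bound reads $a_3-(n_3-n_2)\deg p_2\le (n_2-1)d-1$. This gives $\deg p_2\ge r_{32}$ in subcase (1) and $\deg p_2\ge d$ in subcases (2) and (3) at once.

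You instead use only that $\deg p_2$ is itself edge-admissible, via Table~\ref{tabla1} or a degree balance for $p_2$. This is more elementary: it bypasses the Cramer elimination of Proposition~\ref{prop:A2A3} and (ND3), and depends only on the Newton polygon. The cost is a slope-by-slope inspection.

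The one step you leave vague is $r_{32}<r_{3\partial}$ in subcase (1). It follows from the mediant identity $r_{3\partial}=\frac{n_3-n_2}{n_3-1}\,r_{32}+\frac{n_2-1}{n_3-1}\,r_{2\partial}$ (the analogue of Lemma~\ref{lema:relation}) together with $r_{32}<r_{2\partial}=d$. Equivalently, $s\mapsto a_1+(n_3-n_1)s$ and $s\mapsto (n_3-1)s-1$ have larger slope than $s\mapsto a_2+(n_3-n_2)s$ and agree with it at $s=d$, so both lie below it for $s<d$.

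With this in hand you can shorten subcase (1) further. There $r_{21}=r_{2\partial}=r_{1\partial}=d$ and $r_{31},r_{3\partial}\ge r_{32}$, so every element of $\Gamma$ is at least $r_{32}$, and $\deg p_2\in\Gamma$ already gives the bound.
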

	
	\begin{proof}
		Let $d:=\deg(p_1)$. Again with similar arguments, the top degree is $(n_3-1)d-1$, and therefore $a_3\le (n_3-1)d-1$.
		
		Moreover, $r_{1\partial}=(a_1+1)/(n_1-1)=d$. Since $p_1$ is a rational solution, Proposition~\ref{prop:condinv} gives
		$p_1^{n_3-n_2}\mid A_3$, so $a_3\ge (n_3-n_2)d$, and therefore $d\le r_0$.
		Thus $\deg(p_1)=r_{1\partial}$.
		
		If $a_2<(n_2-1)d-1$ and $a_3<(n_3-1)d-1$, then
		$D_2(p_1)<D_1(p_1)=D_\partial(p_1)$ and $D_3(p_1)<D_1(p_1)$, hence
		$T_d=\{1,\partial\}$ by Proposition~\ref{prop:cases-norepeat}(1).
		This is impossible under {\rm (ND)}, since Proposition~\ref{prop:one-solution-summary}
		excludes the binomial subcase $T_d=\{1,\partial\}$
		(equivalently, by Remark~\ref{rem:ND-binomial}).
		Therefore at least one of the equalities
		$a_2=(n_2-1)d-1$ or $a_3=(n_3-1)d-1$ must hold.
		
		If $a_2=(n_2-1)d-1$ and $a_3<(n_3-1)d-1$, then
		$D_2(p_1)=D_1(p_1)=D_\partial(p_1)$ while $D_3(p_1)<D_1(p_1)$, so
		$T_d=\{2,1,\partial\}$ by Proposition~\ref{prop:cases-norepeat}(2).
		Moreover, \eqref{eq:degA3-bound} gives
		$a_3-(n_3-n_2)\deg(p_2)\le (n_2-1)d-1=a_2$, hence $\deg(p_2)\ge r_{32}$.
		Since $\deg(p_2)\le d$ by relabeling and $d\le r_0$, we obtain
		$r_{32}\le \deg(p_2)\le d\le r_0$.
		
		If $a_2<(n_2-1)d-1$ and $a_3=(n_3-1)d-1$, then
		$D_3(p_1)=D_1(p_1)=D_\partial(p_1)$ while $D_2(p_1)<D_1(p_1)$, so
		$T_d=\{3,1,\partial\}$ by Proposition~\ref{prop:cases-norepeat}(2).
		Also, \eqref{eq:degA3-bound} yields
		$(n_3-1)d-1-(n_3-n_2)\deg(p_2)\le (n_2-1)d-1$, hence $\deg(p_2)\ge d$.
		Since $\deg(p_2)\le d$, we get $\deg(p_2)=d$.
		
		Finally, if $a_2=(n_2-1)d-1$ and $a_3=(n_3-1)d-1$, then
		$D_3(p_1)=D_2(p_1)=D_1(p_1)=D_\partial(p_1)$, so
		$T_d=\{3,2,1,\partial\}$ by Proposition~\ref{prop:cases-norepeat}(3).
		As in the previous subcase, \eqref{eq:degA3-bound} gives $\deg(p_2)\ge d$, and hence
		$\deg(p_2)=d$.
		
		These three possibilities are mutually exclusive and exhaustive.
	\end{proof}
	
	We are now in a position to prove Theorem~\ref{thm:two-rational-solutions}.
	
	\begin{proof}[Proof of Theorem~\ref{thm:two-rational-solutions}]
		After relabeling so that $\deg p_2\le \deg p_1$, Lemma~\ref{lem:bridge-C123} shows that exactly one of {\rm (C1)}--{\rm (C3)} holds. The corresponding refinements are given by Propositions~\ref{prop:C1}, \ref{prop:C2}, and \ref{prop:C3}, respectively. The final non-existence statement is just the contrapositive of the first assertion.
	\end{proof}
	
	\begin{remark}\label{rem:same-degree}
		Under the hypotheses of Theorem~\ref{thm:two-rational-solutions}, let \( d := \deg(p_1) \). If \( \deg(p_2) = d \), then by examining the inequalities in the cases that satisfy this condition, it is immediate that in cases {\rm (C1)} and {\rm (C2)} one has \( a_1 < a_2 < a_3 \), while in {\rm (C3)} one has \( a_1 < a_3 \) and \( a_2 < a_3 \). Thus, if these inequalities do not hold, we can rule out the coexistence of rational solutions with denominators of the same degree for those particular cases.
	\end{remark}
	
	\subsection*{The case of three rational solutions} 
	
	We conclude this section by considering the case of three distinct nonconstant rational solutions. In contrast with the previous results, no nondegeneracy hypothesis is needed in this subsection for the results we want to cover.
	
	Throughout this subsection, we consider three distinct nonconstant rational solutions of \eqref{ecu:main}, namely, $x_j=1/p_j(t),\ j=1,2,3$. Set
	\[
	G:=
	\begin{pmatrix}
		1 & p_1^{\,n_3-n_2} & p_1^{\,n_3-n_1}\\
		1 & p_2^{\,n_3-n_2} & p_2^{\,n_3-n_1}\\
		1 & p_3^{\,n_3-n_2} & p_3^{\,n_3-n_1}
	\end{pmatrix},
	\qquad
	\Delta_{123}:=\det(G).
	\]
	A direct expansion gives
	\[
	\Delta_{123}
	=
	(p_2^{\,n_3-n_2}-p_1^{\,n_3-n_2})(p_3^{\,n_3-n_1}-p_1^{\,n_3-n_1})
	-
	(p_3^{\,n_3-n_2}-p_1^{\,n_3-n_2})(p_2^{\,n_3-n_1}-p_1^{\,n_3-n_1}).
	\]
	
	\begin{proposition}\label{prop:A1A2A3-three-solutions}
		If $\Delta_{123} \not\equiv 0$, then $A_3,A_2,A_1$ are uniquely determined by $p_1,p_2,p_3$.
	\end{proposition}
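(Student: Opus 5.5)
The argument is linear algebra over the field $\Bbbk(t)$, in the same spirit as Proposition~\ref{prop:A2A3}, but now treating all three coefficients $A_3,A_2,A_1$ as unknowns.

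First, for each $j\in\{1,2,3\}$, condition \eqref{ecu:condinv2} (equivalently, Proposition~\ref{prop:condinv}(b)) gives
\[
A_3+A_2\,p_j^{\,n_3-n_2}+A_1\,p_j^{\,n_3-n_1}=-\,p_j^{\,n_3-2}p_j'=-\tfrac{1}{n_3-1}\big(p_j^{\,n_3-1}\big)'.
\]
These three identities form a linear system $G\,(A_3,A_2,A_1)^{T}=b$ over $\Bbbk(t)$, where $b_j:=-\tfrac{1}{n_3-1}(p_j^{\,n_3-1})'$. The matrix $G$ is exactly the matrix defined above.

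Next, the hypothesis $\Delta_{123}=\det G\not\equiv0$ means that $G$ is invertible over $\Bbbk(t)$. Hence the system has at most one solution $(A_3,A_2,A_1)\in\Bbbk(t)^3$. Since the coefficients $A_i$ of the equation do solve it, they coincide with that unique solution, which depends only on $p_1,p_2,p_3$.

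Cramer's rule then gives explicit formulas $A_i=\Delta^{(i)}/\Delta_{123}$, where $\Delta^{(i)}$ is obtained by replacing the corresponding column of $G$ with $b$. A useful simplification is to subtract the first row from the other two before computing, which yields the stated expansion of $\Delta_{123}$ and analogous formulas for the numerators.

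This proposition contains no real obstacle. The only point to check is that uniqueness is meant within $\Bbbk(t)$, and therefore a fortiori within $\Bbbk[t]$. No nondegeneracy hypothesis is needed, since invertibility of $G$ is assumed directly.
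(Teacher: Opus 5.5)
Your proposal is correct and follows essentially the same route as the paper. The paper also writes the three identities from \eqref{ecu:condinv2} as a $3\times 3$ linear system with coefficient matrix $G$, and concludes uniqueness from $\Delta_{123}\not\equiv 0$.
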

	
	\begin{proof}
		For each $j\in\{1,2,3\}$, equation \eqref{ecu:condinv2} gives
		\[
		A_3+A_2\,p_j^{\,n_3-n_2}+A_1\,p_j^{\,n_3-n_1}
		=
		-\,p_j^{\,n_3-2}p_j'.
		\]
		Thus $(A_3,A_2,A_1)$ is a solution of a linear $3\times 3$ system with coefficient matrix $G$. Since its determinant $\Delta_{123}$ is nonzero by hypothesis, the system has a unique solution.
	\end{proof}
	
	\begin{lemma}\label{lem:three-distinct-degrees-nonzero-determinant}
		If $\deg p_1<\deg p_2<\deg p_3$, then $\Delta_{123}\not\equiv 0$.
	\end{lemma}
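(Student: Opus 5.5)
We will show that $\Delta_{123}$ has a nonzero leading term by a degree count. Set $d_j:=\deg p_j$ with $d_1<d_2<d_3$, and abbreviate $m:=n_3-n_2$ and $M:=n_3-n_1$, so that $1\le m<M$.

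We use the expanded form of the determinant displayed just before Proposition~\ref{prop:A1A2A3-three-solutions}. Since $d_1<d_2<d_3$, the leading coefficients cannot cancel in the differences, so
\[
\deg(p_j^{k}-p_1^{k})=k\,d_j \qquad (j=2,3,\ k\ge 1).
\]
Each difference therefore has the leading term $\operatorname{lc}(p_j)^k t^{kd_j}$.

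The first product $(p_2^{m}-p_1^{m})(p_3^{M}-p_1^{M})$ has degree $m d_2+M d_3$. The second product $(p_3^{m}-p_1^{m})(p_2^{M}-p_1^{M})$ has degree $m d_3+M d_2$. Their difference is
\[
(m d_2+M d_3)-(m d_3+M d_2)=(M-m)(d_3-d_2)>0,
\]
because $M>m$ and $d_3>d_2$. Hence the first product strictly dominates in degree. Consequently
\[
\deg\Delta_{123}=m d_2+M d_3,
\]
with leading coefficient $\operatorname{lc}(p_2)^{m}\operatorname{lc}(p_3)^{M}\neq 0$, and so $\Delta_{123}\not\equiv 0$.

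This is a rearrangement-type inequality: pairing the larger exponent with the larger degree maximizes the degree of the product. The only point requiring care is confirming that no cancellation occurs inside each difference $p_j^k-p_1^k$, and this follows directly from the strict degree inequalities. No nondegeneracy hypothesis is needed, which is consistent with the claim made at the start of this subsection. Nor do we expect any real obstacle beyond keeping track of the ordering $m<M$.
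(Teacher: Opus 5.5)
Your proof is correct and is essentially the paper's argument: both show that $\operatorname{lc}(p_2)^{n_3-n_2}\operatorname{lc}(p_3)^{n_3-n_1}\,t^{(n_3-n_2)d_2+(n_3-n_1)d_3}$ is the unique top-degree term of $\Delta_{123}$. The only difference is bookkeeping. You use the row-reduced two-product form, so the single inequality $(M-m)(d_3-d_2)>0$ suffices, whereas the paper expands the determinant into six terms and compares all six degrees.
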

	
	\begin{proof}
		Set $d_i:=\deg p_i$, $i=1,2,3$. Write $a:=n_3-n_2$ and $b:=n_3-n_1$, so that $0<a<b$. Expanding $\Delta_{123}$ as an alternating sum, its six terms have degrees
		\[
		a d_2+b d_3,\quad a d_1+b d_2,\quad a d_3+b d_1,\quad
		a d_3+b d_2,\quad a d_1+b d_3,\quad a d_2+b d_1.
		\]
		Since $d_1<d_2<d_3$ and $a<b$, the unique maximal degree is $a d_2+b d_3$. Therefore no cancellation can occur at top degree, and $\Delta_{123}\not\equiv 0$.
	\end{proof}
	
	\begin{proposition}\label{prop:three-distinct-degrees}
		If $\deg p_1<\deg p_2<\deg p_3$, then
		\[
		\begin{split}
			\deg A_1 &= (n_1-1)\deg p_3-1,\\
			\deg A_2 &= (n_1-1)\deg p_3-1+(n_2-n_1)\deg p_2,\\
			\deg A_3 &= (n_1-1)\deg p_3-1+(n_2-n_1)\deg p_2+(n_3-n_2)\deg p_1.
		\end{split}
		\]
	\end{proposition}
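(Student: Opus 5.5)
We will solve the linear system of Proposition~\ref{prop:A1A2A3-three-solutions} explicitly by Cramer's rule and read off degrees from the leading terms of the resulting determinants.

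By Lemma~\ref{lem:three-distinct-degrees-nonzero-determinant}, $\Delta_{123}\not\equiv 0$, and its degree is exactly $a d_2+b d_3$. Here $d_i:=\deg p_i$, $a:=n_3-n_2$ and $b:=n_3-n_1$, so that $0<a<b<n_3-1$.

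Set $f_j:=-p_j^{\,n_3-2}p_j'$. Since $d_j\ge 1$, the polynomial $f_j$ has degree exactly $(n_3-1)d_j-1$, with leading coefficient $-d_j\operatorname{lc}(p_j)^{n_3-1}\neq 0$. By Cramer's rule,
\[
A_1=\frac{\Delta^{(1)}}{\Delta_{123}},\qquad A_2=\frac{\Delta^{(2)}}{\Delta_{123}},\qquad A_3=\frac{\Delta^{(3)}}{\Delta_{123}}.
\]
Here $\Delta^{(1)},\Delta^{(2)},\Delta^{(3)}$ are obtained from $G$ by replacing, respectively, the third, second and first column by $(f_1,f_2,f_3)^T$. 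The whole task then reduces to showing that each $\Delta^{(k)}$ has a unique term of maximal degree, exactly as in the proof of Lemma~\ref{lem:three-distinct-degrees-nonzero-determinant}.

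First consider $\Delta^{(1)}=\det(1,\,p_j^{a},\,f_j)$. Its six terms have degrees $a d_\tau+(n_3-1)d_\sigma-1$ for $\sigma\neq\tau$. Since $a<n_3-1$ and $d_1<d_2<d_3$, the unique maximum is at $\sigma=3$, $\tau=2$, with degree $(n_3-1)d_3-1+a d_2$. Subtracting $\deg\Delta_{123}$ gives $\deg A_1=(n_3-1-b)d_3-1=(n_1-1)d_3-1$.

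Next, $\Delta^{(2)}=\det(1,\,f_j,\,p_j^{b})$ has terms of degree $(n_3-1)d_\sigma-1+b d_\tau$. Since $b<n_3-1$, the unique maximum is at $\sigma=3$, $\tau=2$. Using $b-a=n_2-n_1$, this gives $\deg A_2=(n_1-1)d_3-1+(n_2-n_1)d_2$.

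Finally, $\Delta^{(3)}=\det(f_j,\,p_j^{a},\,p_j^{b})$ has terms indexed by permutations $(\sigma,\tau,\rho)$ of $\{1,2,3\}$, of degree $(n_3-1)d_\sigma+a d_\tau+b d_\rho-1$. The weights $a<b<n_3-1$ and the degrees $d_1<d_2<d_3$ are both strictly increasing. Hence the strict rearrangement inequality says the unique maximizing permutation pairs the largest weight with the largest degree: $\sigma=3$, $\rho=2$, $\tau=1$. Subtracting $a d_2+b d_3$ gives
\[
\deg A_3=(n_1-1)d_3-1+(n_2-n_1)d_2+(n_3-n_2)d_1.
\]

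The only delicate point is ensuring that no cancellation occurs in the top coefficient of each $\Delta^{(k)}$. This is why we insist on uniqueness of the maximizing term, which follows from strictness of all the inequalities among weights and degrees. The surviving leading coefficient is then a product of nonzero leading coefficients, times $\pm d_3\neq0$. Since each $A_k$ is a polynomial and the quotient is exact, $\deg A_k=\deg\Delta^{(k)}-\deg\Delta_{123}$, which gives the three formulas.
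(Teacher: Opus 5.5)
Your proof is correct and follows the same route as the paper: Cramer's rule on the $3\times 3$ system of Proposition~\ref{prop:A1A2A3-three-solutions}, with each degree read off from the unique dominant term of the corresponding determinant. You also supply the details that the paper compresses into ``inspecting the obtained expressions''. In particular, your uniqueness-of-maximal-term arguments (the strict rearrangement step for $\Delta^{(3)}$, and the check that $\operatorname{lc}(f_3)=-d_3\operatorname{lc}(p_3)^{n_3-1}\neq 0$) are sound.
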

	
	\begin{proof}
		Set $d_i:=\deg p_i$, $i=1,2,3$. By Lemma~\ref{lem:three-distinct-degrees-nonzero-determinant}, $\Delta_{123}\not\equiv 0$, so Proposition~\ref{prop:A1A2A3-three-solutions} applies. The result follows by applying Cramer's rule and inspecting the obtained expressions for the solutions.
	\end{proof}
	
	\begin{corollary}\label{cor:at-most-three-distinct-degrees}
		Assume that \eqref{ecu:main} has at least three distinct nonconstant rational solutions. Then the set of degrees of the denominators of all nonconstant rational solutions has cardinality at most $3$.
	\end{corollary}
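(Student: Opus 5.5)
We argue by contradiction. Suppose the set of denominator degrees of nonconstant rational solutions has at least four elements. Choose four solutions $x_j=1/p_j$, $j=1,\dots,4$, with pairwise distinct degrees, and order them so that $\deg p_1<\deg p_2<\deg p_3<\deg p_4$.

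Any three of these have pairwise distinct degrees, so Proposition~\ref{prop:three-distinct-degrees} applies to each such triple. It expresses $a_1,a_2,a_3$ in terms of the three degrees in that triple. We compare the formulas for different triples to derive a contradiction.

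First take the triples $\{1,2,3\}$ and $\{1,2,4\}$, in which $p_3$ and $p_4$ respectively play the role of the highest-degree denominator. The formula for $\deg A_1$ gives
\[
(n_1-1)\deg p_3-1=\deg A_1=(n_1-1)\deg p_4-1 .
\]
Since $n_1>1$, this forces $\deg p_3=\deg p_4$, which contradicts the strict ordering. So at most three distinct degrees can occur.

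If one also wants to use the other two formulas as a check, the triples $\{1,3,4\}$ and $\{2,3,4\}$ share the top element $p_4$. Comparing their $\deg A_2$ formulas gives $(n_2-n_1)\deg p_3=(n_2-n_1)\deg p_3$, which is consistent. Comparing their $\deg A_3$ formulas then gives $(n_3-n_2)\deg p_1=(n_3-n_2)\deg p_2$, which is again a contradiction. Either route suffices.

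The only point needing care is that Proposition~\ref{prop:three-distinct-degrees} really applies to every triple drawn from the four solutions. Its hypotheses are just three distinct nonconstant rational solutions with strictly increasing denominator degrees. No (ND) assumption is involved, and Lemma~\ref{lem:three-distinct-degrees-nonzero-determinant} guarantees $\Delta\not\equiv0$ for each triple. The hypothesis ``at least three solutions'' in the statement is only needed so that the claim is meaningful; the argument uses four solutions drawn from the supposed four distinct degrees. I expect no real obstacle beyond the correct bookkeeping of which solution is the top-degree one in each triple.
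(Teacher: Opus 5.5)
Your proof is correct and is essentially the paper's argument. Both compare the formula $\deg A_1=(n_1-1)\deg p_{\mathrm{top}}-1$ from Proposition~\ref{prop:three-distinct-degrees} across two triples with different top-degree denominators, forcing $\deg p_3=\deg p_4$; the paper uses $(x_1,x_2,x_3)$ and $(x_2,x_3,x_4)$, while you use $\{1,2,3\}$ and $\{1,2,4\}$, and your extra $\deg A_3$ check is also valid but unnecessary.
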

	
	\begin{proof}
		Assume, by contradiction, that there exist four distinct degrees of denominators of nonconstant rational solutions, say $d_1<d_2<d_3<d_4$. Choose four corresponding nonconstant rational solutions $x_j=1/p_j(t),\ \deg(p_j)=d_j$,  $j=1,2,3,4$. Applying Proposition~\ref{prop:three-distinct-degrees} to the triple $(x_1,x_2,x_3)$ gives $a_1=(n_1-1)d_3-1$. Applying the same proposition to the triple $(x_2,x_3,x_4)$ gives $a_1=(n_1-1)d_4-1$. Hence $d_3=d_4$, a contradiction.
	\end{proof}
	
	\section{Bounds on the number of rational solutions}\label{Sect:6}
	
	Throughout this section we keep the standing assumptions and notation from Sections~\ref{Sect:2}--\ref{Sect:4}. Fix an equation \eqref{ecu:main}. Recall that any nonconstant rational solution of the equation is of the form $x(t)=1/p(t)$ with $p\in\Bbbk[t]$, and that necessarily $\deg(p)\in\Gamma$ (cf.\ \eqref{ecu:Gamma}) and $p^{\,n_3-n_2}\mid A_3$ (cf.\ \eqref{ecu:condinv}). In particular, if there is no polynomial $p\in\Bbbk[t]$ with $\deg p\in\Gamma$ such that $p^{\,n_3-n_2}\mid A_3$, then \eqref{ecu:main} admits no nonconstant rational solutions. When such solutions exist, we denote by
	\[
	\Gamma_{\mathrm{sol}}:=\{\deg p:\ x(t)=1/p(t)\ \text{is a nonconstant rational solution of }\eqref{ecu:main}\}\subset\Gamma
	\]
	the set of realized denominator degrees for that fixed equation, this is, while $\Gamma$ is the set of all candidate degrees, $\Gamma_{\mathrm{sol}}$ is the set of degrees that are actually realized for that equation. We also write $\max\Gamma_{\mathrm{sol}}$ for the maximal realized denominator degree.
	
	We now turn to quantitative bounds on the number of such solutions when they exist.
	
	\begin{proposition}
		Assume (ND) and that \eqref{ecu:main} has at least one nonconstant
		rational solution. Let $r:=\max \Gamma_{\mathrm{sol}}$. If
		\[
		T_r\in\big\{\{3,2\},\ \{3,1\}\big\},
		\]
		then \eqref{ecu:main} admits exactly one nonconstant rational solution.
	\end{proposition}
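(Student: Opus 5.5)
We argue by contradiction, using Theorem~\ref{thm:two-rational-solutions}. Existence of at least one nonconstant rational solution is assumed, so only uniqueness needs proof. Suppose that \eqref{ecu:main} has two distinct nonconstant rational solutions. Since $r=\max\Gamma_{\mathrm{sol}}$ is realized, we can choose $x_1=1/p_1(t)$ with $\deg p_1=r$. Let $x_2=1/p_2(t)$ be any other nonconstant rational solution, distinct from $x_1$. By maximality of $r$ we have $\deg p_2\le \deg p_1$, so the labelling already satisfies the convention of Theorem~\ref{thm:two-rational-solutions}, and there $d=\deg p_1=r$.

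The key observation is that the tie set $T_d$ in Definition~\ref{def:Tr} depends only on the integer $d$ and on the data $(a_i,n_i)$ of the equation. It does not depend on which solution realizes that degree. Hence the set $T_d$ produced by Theorem~\ref{thm:two-rational-solutions} for the pair $(x_1,x_2)$ is exactly the set $T_r$ in the hypothesis. We then run through the alternatives of the theorem, which are valid under (ND):
\begin{itemize}
\item in (C1), $T_d=\{3,2,\partial\}$;
\item in (C2)(a), $T_d=\{2,1\}$, and in (C2)(b), $T_d=\{3,2,1\}$;
\item in (C3), $T_d$ is $\{2,1,\partial\}$, $\{3,1,\partial\}$ or $\{3,2,1,\partial\}$.
\end{itemize}
None of these sets equals $\{3,2\}$ or $\{3,1\}$. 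This contradicts $T_r\in\{\{3,2\},\{3,1\}\}$, so no second solution exists and the solution is unique.

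The only step needing care is the identification just described. We must check that ``$T_d$'' in Propositions~\ref{prop:C1}--\ref{prop:C3} denotes the index set $T_{\deg p_1}$ of Definition~\ref{def:Tr}, computed from the degree equalities $D_\ell(p_1)$ via Proposition~\ref{prop:cases-norepeat}. We must also check that these equalities depend only on $\deg p_1$, since $D_\ell(p)=a_\ell+(n_3-n_\ell)\deg p$. Both are immediate from the definitions. One should also note that the exhaustiveness of (C1)--(C3) in Lemma~\ref{lem:bridge-C123} requires only (ND) and $\deg p_2\le\deg p_1$, and both hold here. With this, the contradiction is complete.
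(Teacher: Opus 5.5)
Your proposal is correct and follows essentially the same route as the paper: you fix a solution whose denominator has the maximal realized degree $r$, apply Theorem~\ref{thm:two-rational-solutions} to it together with any second solution, and note that none of the tie sets produced by (C1)--(C3) equals $\{3,2\}$ or $\{3,1\}$. Your explicit enumeration of the six alternatives, and your remark that $T_d$ depends only on $d$ and the data $(a_i,n_i)$, spell out what the paper leaves implicit; the paper's extra appeal to Remark~\ref{rem:ND-binomial} is already absorbed into the theorem.
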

	
	\begin{proof}
		Choose a nonconstant rational solution $x_1=1/p_1(t)$ such that
		$\deg(p_1)=r$. Assume, by contradiction, that there exists another nonconstant rational solution $x_2=1/p_2(t)$ distinct from $x_1$. Since $r=\deg(p_1)$ is maximal, we have $\deg(p_2)\le r$. Then, excluding the pure binomial subcases with $\partial$ by Remark~\ref{rem:ND-binomial} and by Theorem~\ref{thm:two-rational-solutions} applied to the pair $(x_1,x_2)$, we get $T_r\notin\big\{\{3,2\},\ \{3,1\}\big\}$, which contradicts the assumption, therefore no such second solution exists.
	\end{proof}
	
	\begin{proposition}
		Assume (ND) and that \eqref{ecu:main} has at least two nonconstant rational solutions. Let  $r:=\max\Gamma_{\mathrm{sol}}$. Then exactly one of the
		following holds:
		\begin{enumerate}[(a)]
			\item $r=r_{32}=r_{2\partial}=r_{3\partial}$, $T_r=\{3,2,\partial\}$, and every nonconstant rational solution has denominator degree $r$.
			\item $r=r_{21}$, $T_r=\{2,1\}$, and every nonconstant rational solution has denominator degree either $r$ or $r_{32}$.
			\item $r=r_{32}=r_{31}=r_{21}$, $T_r=\{3,2,1\}$, and every nonconstant rational solution has denominator degree $r$.
			\item $r=r_{21}=r_{2\partial}=r_{1\partial}$, $T_r=\{2,1,\partial\}$, and every nonconstant rational solution has denominator degree among $r_{32},\,r_{31},\,r_{3\partial},\,r$.
			\item $r=r_{31}=r_{3\partial}=r_{1\partial}$, $T_r=\{3,1,\partial\}$, and every nonconstant rational solution has denominator degree $r$.
			\item $r=r_{32}=r_{31}=r_{21}=r_{3\partial}=r_{2\partial}=r_{1\partial}$, $T_r=\{3,2,1,\partial\}$, and every nonconstant rational solution has denominator degree $r$.
		\end{enumerate}
		
	\end{proposition}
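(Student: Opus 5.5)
The plan is to fix a solution $x_1=1/p_1$ with $\deg p_1=r=\max\Gamma_{\mathrm{sol}}$, and to pair it with an arbitrary second solution $x_2=1/p_2$, distinct from $x_1$. Such a second solution exists by hypothesis, and automatically $\deg p_2\le r$, so Theorem~\ref{thm:two-rational-solutions} applies to the pair $(x_1,x_2)$ with $d=r$. The key observation is that the trichotomy (C1)/(C2)/(C3) and its sub-alternatives depend only on $a_1,a_2,a_3,n_i$ and $d=r$, not on $p_2$. They therefore select the same alternative for every choice of $x_2$, and these alternatives are mutually exclusive. This gives the six mutually exclusive items: (C1) gives (a); (C2)(1) gives (b); (C2)(2) gives (c); (C3)(1) gives (d); (C3)(2) gives (e); and (C3)(3) gives (f). In each case the values of $r$ and of $T_r$ are read off directly from the corresponding Proposition~\ref{prop:C1}, \ref{prop:C2} or \ref{prop:C3}.

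For the degree restrictions, I would apply the same propositions to every pair $(x_1,x_2)$ with $x_2\neq x_1$. In cases (a), (c), (e), (f) the propositions give $\deg p_2=d=r$ directly. In case (b), Proposition~\ref{prop:C2}(1) gives $\deg p_2=r_{32}$, so every solution has degree $r$ or $r_{32}$; solutions of degree $r$ other than $x_1$ may also occur, and they are covered by the allowed list.

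Case (d) is the main obstacle, since Proposition~\ref{prop:C3}(1) only yields $r_{32}\le\deg p_2\le r$. Here I would use $\deg p_2\in\Gamma$ and $\deg p_2\le r$. By Proposition~\ref{prop:one-solution-summary}, the binomial ties with $\partial$ are excluded by (ND2), although the degree values themselves may still coincide with candidates. I would then check which of the six candidates remain.
\begin{itemize}
\item $r_{21}=r_{2\partial}=r_{1\partial}=r$ by Proposition~\ref{prop:C3}(1), so these candidates contribute only the degree $r$.
\item The remaining candidates are $r_{32}$, $r_{31}$ and $r_{3\partial}$. These are exactly the values listed in (d).
\end{itemize}
I would also note that Table~\ref{tabla1} forces $r_{3\partial}\le r_{1\partial}=r$, which is consistent with these remaining candidates. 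The conclusion is that every degree lies in $\{r_{32},r_{31},r_{3\partial},r\}$.

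Finally, exclusivity of (a)–(f) follows from the distinct tie sets $T_r$ attached to them. Existence of at least one alternative follows from the existence of the second solution, via Lemma~\ref{lem:bridge-C123}.
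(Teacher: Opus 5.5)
Your proposal is correct and follows the paper's own proof. Like the paper, you fix $x_1$ with $\deg p_1=r=\max\Gamma_{\mathrm{sol}}$, apply Theorem~\ref{thm:two-rational-solutions} to each pair $(x_1,x)$, and match its six subcases with (a)--(f). You also spell out what the paper leaves as ``straightforward'': the selected subcase depends only on $a_1,a_2,a_3,n_i$ and $r$, so it is the same for every partner $x$, and in case (d) the candidates $r_{21},r_{2\partial},r_{1\partial}$ all collapse to $r$. Your caveat in (b) about further degree-$r$ solutions is harmless, since (b) allows degree $r$ anyway.
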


	\begin{proof}
		Let $x_1=1/p_1(t)$ be a nonconstant rational solution with $\deg(p_1)=r$, where $r=\max\Gamma_{\mathrm{sol}}$. Let $x=1/p(t)$ be any other nonconstant rational solution. By maximality, $\deg(p)\le r$.
		
		Apply Theorem~\ref{thm:two-rational-solutions} to the pair $(x_1,x)$, after relabeling if necessary so that $\deg(p)\le \deg(p_1)=r$. Then, it is straightforward to verify that the six subcases listed in Theorem~\ref{thm:two-rational-solutions} correspond with the six subcases listed in this theorem.

	\end{proof}
	
	\begin{proposition}\label{prop:global-count-by-cases}
		Assume (ND) and that \eqref{ecu:main} has at least two nonconstant rational solutions. Let $r:=\max\Gamma_{\mathrm{sol}}$. Then the following
		bounds hold, according to which of the alternatives (C1)--(C3)
		holds at the maximal realized degree $r$:
		\begin{enumerate}[(a)]
			\item If (C1) holds, then necessarily
			$T_r=\{3,2,\partial\}$ and $\mathcal{S}\le n_3-1$.
			\item If (C2) holds and $T_r=\{2,1\}$, then
			$\mathcal{S}\le n_3$.
			\item If (C2) holds and $T_r=\{3,2,1\}$, then
			$\mathcal{S}\le n_3-n_1$.
			\item If (C3) holds and $T_r=\{2,1,\partial\}$, then
			$\mathcal{S}\le (n_2-1)+2(n_3-1)$.
			\item If (C3) holds and $T_r=\{3,1,\partial\}$, then
			$\mathcal{S}\le n_3-1$.
			\item If (C3) holds and $T_r=\{3,2,1,\partial\}$, then
			$\mathcal{S}\le n_3-1$.
		\end{enumerate}
	\end{proposition}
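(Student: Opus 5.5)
The plan is to combine the preceding classification of the maximal realized degree (the proposition listing alternatives (a)--(f)) with the degreewise bound of Corollary~\ref{cor:main_bound}. We then sum, over each realized degree, the number of nonzero roots of the corresponding edge polynomial. Under (ND), conditions (ND1)--(ND2) give exactly the hypotheses of Corollary~\ref{cor:main_bound} for every edge-admissible $r\in\Gamma$. Hence the number of solutions of denominator degree $s$ is at most $\deg P_s-\operatorname{mult}_0(P_s)$, and
\[
\mathcal S\le\sum_{s\in\Gamma_{\mathrm{sol}}}\bigl(\deg P_s-\operatorname{mult}_0(P_s)\bigr).
\]
The work is to identify, in each case, which degrees $s$ can occur and what $T_s$ is.

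In the cases where every solution has degree $r$, a single edge polynomial suffices, with no $T_s$ needed for $s<r$.
\begin{enumerate}[(a)]
\item In (C1), Proposition~\ref{prop:C1} gives $\Gamma_{\mathrm{sol}}=\{r\}$ with $T_r=\{3,2,\partial\}$. Then $P_r=C(\alpha_3C^{n_3-1}+\alpha_2C^{n_2-1}+r)$, which has at most $n_3-1$ nonzero roots.
\item[(c)] In (C2) with $T_r=\{3,2,1\}$, Proposition~\ref{prop:C2} forces all solutions to have degree $r$. Here $P_r=C^{n_1}(\alpha_3C^{n_3-n_1}+\alpha_2C^{n_2-n_1}+\alpha_1)$, giving at most $n_3-n_1$.
\item[(e)] For $T_r=\{3,1,\partial\}$ the bound is likewise $n_3-1$, since $P_r=C(\alpha_3C^{n_3-1}+\alpha_1C^{n_1-1}+r)$.
\item[(f)] For $T_r=\{3,2,1,\partial\}$ the same count gives $n_3-1$.
\end{enumerate}

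For case (b), degree-$r$ solutions number at most $n_2-n_1$, since $P_r=C^{n_1}(\alpha_2C^{n_2-n_1}+\alpha_1)$. Any other solution has degree $r_{32}<r$. We must determine $T_{r_{32}}$ from the inequalities of (C2)(1): the $\partial$ vertex lies strictly above, and $r_{31}\ne r_{32}$ because $r_{32}<r_{21}$. Hence $T_{r_{32}}=\{3,2\}$, with at most $n_3-n_2$ nonzero roots. The total is $(n_2-n_1)+(n_3-n_2)=n_3-n_1\le n_3$.

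Case (d) is the main obstacle. Degree-$r$ solutions contribute at most $n_2-1$, from $P_r=C(\alpha_2C^{n_2-1}+\alpha_1C^{n_1-1}+r)$. The remaining degrees lie among $r_{32},r_{31},r_{3\partial}$, each below $r$, and we need a bound of $n_3-1$ for each of at most two of these degrees. First, every tie set at such a degree contains $3$. Its edge polynomial therefore has degree $n_3$ and vanishes at $0$ to order at least $1$, so it has at most $n_3-1$ nonzero roots. Second, at most two further degrees can occur. I would argue this via the convex geometry of the lower Newton diagram at $\{Q_3,Q_2,Q_1,Q_\partial\}$: since $Q_2,Q_1,Q_\partial$ are collinear on the edge $E_r$, at most two further compact lower edges contain $Q_3$. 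These edges are $[Q_3,Q_2]$ and $[Q_3,Q_\partial]$ (or $[Q_3,Q_1]$ if that is the supporting one), so at most two of $r_{32},r_{31},r_{3\partial}$ are edge-admissible. Alternatively, Corollary~\ref{cor:at-most-three-distinct-degrees} caps the number of realized degrees at three. Either route yields $\mathcal S\le (n_2-1)+2(n_3-1)$. The delicate point is justifying that only two extra edge slopes can appear; this is where I would check the geometry carefully, and otherwise fall back on the three-degree corollary.
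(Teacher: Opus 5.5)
Your proposal is correct. For (a), (c), (e), (f) it coincides with the paper's proof: all solutions have degree $r$ by Propositions~\ref{prop:C1}--\ref{prop:C3}, and one counts the nonzero roots of $P_r$. Your fallback in (d) is exactly the paper's argument via Corollary~\ref{cor:at-most-three-distinct-degrees}. Two small points there. First, as the paper notes, that corollary needs at least three solutions, and the case $\mathcal S=2$ is trivial. Second, you do not need $3\in T_s$, since every $P_s$ has degree at most $n_3$ and vanishes at $0$.

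The genuine difference is in (b). The paper uses Proposition~\ref{prop:C2}(1): any second solution has degree $r_{32}<r$, so degree $r$ carries exactly one solution. It then bounds degree $r_{32}$ crudely by $n_3-1$, giving $1+(n_3-1)=n_3$. You instead bound degree $r$ by the $n_2-n_1$ nonzero roots of $\alpha_2C^{n_2}+\alpha_1C^{n_1}$, and you compute $T_{r_{32}}=\{3,2\}$. That computation is right: $r_{32}<r_{31}<r_{21}$ because the convex combination has positive weights, and $r_{32}<r<r_{2\partial}$. This yields the sharper bound $n_3-n_1$. Combining both observations would give even $1+(n_3-n_2)$.

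Your geometric route in (d) also works, but phrase it via the lower envelope $s\mapsto\min_\ell\Phi_\ell(s)$ of the four affine functions. The paper's $\mathcal N=\operatorname{Conv}(\mathcal Q+\mathbb R_{\ge0}^2)$ is awkwardly oriented for the functional $nr-a$, so do not rely on it literally. Done this way, the route gives more than you claim. At $s=r$ the functions $\Phi_2,\Phi_1,\Phi_\partial$ coincide, and $\Phi_2$ has the largest slope among them, so $\Phi_2(s)<\min\{\Phi_1(s),\Phi_\partial(s)\}$ for all $s<r$. Hence $T_s\subseteq\{3,2\}$ below $r$, and the only edge-admissible value there is $r_{32}$, with $T_{r_{32}}=\{3,2\}$. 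So $r_{31}$ and $r_{3\partial}$ never occur; the edges $[Q_3,Q_\partial]$ and $[Q_3,Q_1]$ you list are absent. In fact $\mathcal S\le (n_2-1)+(n_3-n_2)=n_3-1$ in case (d), which of course implies the stated bound.
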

	
	\begin{proof}
		For each realized degree $s\in\Gamma_{\mathrm{sol}}$, Corollary~\ref{cor:main_bound} shows that the number of rational solutions of \eqref{ecu:main} with denominator degree $s$ is at most $\deg(P_s)-\operatorname{mult}_0(P_s)$. In particular, every realized degree contributes at most $n_3-1$ solutions.
		
		Let $x_1=1/p_1(t)$ be a nonconstant rational solution with $\deg(p_1)=r$, where $r=\max\Gamma_{\mathrm{sol}}$.
		
		Assume first that $x_1$ satisfies {\rm (C1)}. By Proposition~\ref{prop:C1}, one has $T_r=\{3,2,\partial\}$, and every nonconstant rational solution has denominator degree $r$. Since
		$P_r(C)=\alpha_3C^{n_3}+\alpha_2C^{n_2}+rC$, one has
		$\deg(P_r)-\operatorname{mult}_0(P_r)=n_3-1$. Therefore $\mathcal S\le n_3-1$, proving \textup{(a)}.
		
		Assume next that $x_1$ satisfies {\rm (C2)}. By Proposition~\ref{prop:C2}, there are two possibilities.
		
		If $T_r=\{2,1\}$, then every other nonconstant rational solution has denominator degree $r_{32}<r$ by Proposition~\ref{prop:C2}(1). Hence the degree $r$ contributes exactly one solution, while the degree $r_{32}$ contributes at most $n_3-1$ solutions. Therefore
		$\mathcal S\le 1+(n_3-1)=n_3$, proving \textup{(b)}.
		
		If $T_r=\{3,2,1\}$, then
		$P_r(C)=\alpha_3C^{n_3}+\alpha_2C^{n_2}+\alpha_1C^{n_1}$, so
		$\deg(P_r)-\operatorname{mult}_0(P_r)=n_3-n_1$. Moreover, Proposition~\ref{prop:C2} shows that every nonconstant rational solution has denominator degree $r$. Hence $\mathcal S\le n_3-n_1$, proving \textup{(c)}.
		
		Finally, assume that $x_1$ satisfies {\rm (C3)}. By Proposition~\ref{prop:C3}, one has $r=r_{1\partial}$, and exactly one of three subcases occurs.
		
		If $T_r=\{2,1,\partial\}$, then
		$P_r(C)=\alpha_2C^{n_2}+\alpha_1C^{n_1}+rC$, so the degree $r$ contributes at most $n_2-1$ solutions. If fewer than three nonconstant rational solutions exist, the stated bound is immediate. Otherwise, Corollary~\ref{cor:at-most-three-distinct-degrees} shows that there are at most two further realized degrees, each contributing at most $n_3-1$ solutions. Hence
		$\mathcal S\le (n_2-1)+2(n_3-1)$, proving \textup{(d)}.
		
		If $T_r=\{3,1,\partial\}$, then
		$P_r(C)=\alpha_3C^{n_3}+\alpha_1C^{n_1}+rC$, so the degree $r$ contributes at most $n_3-1$ solutions. Moreover, Proposition~\ref{prop:C3} shows that every nonconstant rational solution has denominator degree $r$. Therefore $\mathcal S\le n_3-1$, proving \textup{(e)}.
		
		If $T_r=\{3,2,1,\partial\}$, then
		$P_r(C)=\alpha_3C^{n_3}+\alpha_2C^{n_2}+\alpha_1C^{n_1}+rC$, so the degree $r$ contributes at most $n_3-1$ solutions. Again, Proposition~\ref{prop:C3} shows that every nonconstant rational solution has denominator degree $r$. Therefore $\mathcal S\le n_3-1$, proving \textup{(f)}.
	\end{proof}
	
	\begin{corollary}\label{cor:global-count-by-cases-real-refined}
		Assume $\Bbbk=\mathbb{R}$, (ND), and that \eqref{ecu:main} has at least
		two nonconstant rational solutions. Let $r:=\max\Gamma_{\mathrm{sol}}$. Then the
		following bounds hold:
		\begin{enumerate}[(a)]
			\item If (C1) holds, then necessarily
			$T_r=\{3,2,\partial\}$ and $\mathcal{S}\le 4$.
			\item If (C2) holds and $T_r=\{2,1\}$, then
			$\mathcal{S}\le 5$.
			\item If (C2) holds and $T_r=\{3,2,1\}$, then
			$\mathcal{S}\le 4$.
			\item If (C3) holds and $T_r=\{2,1,\partial\}$, then
			$\mathcal{S}\le 12$.
			\item If (C3) holds and $T_r=\{3,1,\partial\}$, then
			$\mathcal{S}\le 4$.
			\item If (C3) holds and $T_r=\{3,2,1,\partial\}$, then
			$\mathcal{S}\le 5$.
		\end{enumerate}
	\end{corollary}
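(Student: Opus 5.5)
The plan is to go through the six alternatives of Proposition~\ref{prop:global-count-by-cases} one at a time. In each case we keep the \emph{structural} information from Propositions~\ref{prop:C1}, \ref{prop:C2}, \ref{prop:C3} (which degrees can be realized and which tie set $T_s$ each one has). We then replace the complex count $\deg P_s-\operatorname{mult}_0(P_s)$ of Corollary~\ref{cor:main_bound} by a count of nonzero \emph{real} roots, using Descartes' rule of signs as in Corollary~\ref{cor:descartes}.

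The basic observation is the following. If $P_s$ has $k$ nonzero monomials, then $\widetilde P_s=P_s/C^{e_s}$ is a $k$-nomial with nonzero constant term. Descartes' rule bounds its positive roots by $V(\widetilde P_s(C))$ and its negative roots by $V(\widetilde P_s(-C))$, where $V(\cdot)$ is the number of sign changes in the coefficient sequence. Hence it has at most $2(k-1)$ nonzero real roots, and by Corollary~\ref{cor:main_bound} at most that many rational solutions of degree $s$.

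Cases (a), (c), (e) should then be immediate.
\begin{itemize}
\item In (a), $T_r=\{3,2,\partial\}$ and $\widetilde P_r=\alpha_3C^{n_3-1}+\alpha_2C^{n_2-1}+r$ is a trinomial, so there are at most $4$ real roots.
\item In (c), $\widetilde P_r=\alpha_3C^{n_3-n_1}+\alpha_2C^{n_2-n_1}+\alpha_1$ is again a trinomial, giving at most $4$.
\item In (e), $\widetilde P_r=\alpha_3C^{n_3-1}+\alpha_1C^{n_1-1}+r$ is a trinomial, giving at most $4$.
\end{itemize}
In each of these cases all solutions have degree $r$ by the corresponding proposition, so $\mathcal S\le 4$.

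Case (b) combines two counts. The degree $r$ contributes exactly one solution, as in the complex proof. All other solutions have degree $s=r_{32}<r_{21}=r$. At $s=r_{32}<r_{21}$ the vertex $Q_1$ lies strictly above the supporting line, by Lemma~\ref{lema:relation} and Table~\ref{tabla1}. Hence $T_s\in\{\{3,2\},\{3,2,\partial\}\}$, and $\widetilde P_s$ has at most three terms. This gives at most $4$ solutions of degree $s$, so $\mathcal S\le 1+4=5$.

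Case (d) is handled similarly. At $r$ we have $T_r=\{2,1,\partial\}$, so $\widetilde P_r$ is a trinomial and gives at most $4$. By Corollary~\ref{cor:at-most-three-distinct-degrees} there are at most two further realized degrees $s\neq r$. For each such $s$ we have $|T_s|\le 3$. Indeed, $|T_s|=4$ would force all six candidate values to coincide by Proposition~\ref{prop:cases-norepeat}(3), and then $s=r_{1\partial}=r$, a contradiction. Each such $s$ therefore contributes at most $4$, and $\mathcal S\le 4+4+4=12$.

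The main obstacle is (f). Here $\widetilde P_r=\alpha_3C^{n_3-1}+\alpha_2C^{n_2-1}+\alpha_1C^{n_1-1}+r$ has four terms, and naive Descartes only gives $6$, not $5$. I would refine it by a parity argument on consecutive exponents. Take two consecutive nonzero terms of $\widetilde P_r$.
\begin{itemize}
\item If their exponents differ by an odd number, the pair produces a sign change in exactly one of the two sequences, that of $\widetilde P_r(C)$ or that of $\widetilde P_r(-C)$.
\item If the difference is even, the pair produces a change in both sequences or in neither.
\end{itemize}
Thus $V(\widetilde P_r(C))+V(\widetilde P_r(-C))\le 6$, with equality only if all three gaps $n_3-n_2$, $n_2-n_1$, $n_1-1$ are even, i.e.\ only if $n_1,n_2,n_3$ are all odd. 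That configuration is excluded in the real case by (ND3,b), as recorded in the row $\{3,2,1,\partial\}$ of Table~\ref{tab:ND3_exclusions_unified}. Hence the total is at most $5$. Since all solutions have degree $r$ by Proposition~\ref{prop:C3}(3), this gives $\mathcal S\le 5$.
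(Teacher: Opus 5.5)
Your proposal is correct and follows the paper's proof essentially case by case. In each case the structural propositions fix which degrees occur, trinomial edge polynomials with Descartes give at most $4$ solutions per degree, and in (f) your gap-parity count $V(\widetilde P_r(C))+V(\widetilde P_r(-C))\le 6$, with equality only when $n_1,n_2,n_3$ are all odd, is an explicit version of the paper's ``compatibility check'' excluded by (ND3,b).

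You are slightly more careful than the paper in justifying $|T_s|\le 3$ for the lower degrees in (b) and (d). The only point to add is that the three-degree corollary used in (d) requires at least three solutions; the remaining case, $\mathcal S=2$, is trivial.
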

	
	\begin{proof}
		Choose a nonconstant rational solution $x_1=1/p_1(t)$ such that
		$\deg(p_1)=r$.
		
		By Corollary~\ref{cor:main_bound}, the number of nonconstant rational solutions
		of denominator degree $s\in\Gamma_{\mathrm{sol}}$ is at most the number of
		nonzero real roots of $P_s$. Moreover, by Corollary~\ref{cor:descartes}, a
		realized degree $s$ with $|T_s|=k$ contributes at most $2(k-1)$
		nonconstant real rational solutions. In particular, degrees with $|T_s|=2$
		contribute at most $2$ solutions, and degrees with $|T_s|=3$ contribute at
		most $4$ solutions.
		
		Assume first that $x_1$ satisfies {\rm (C1)}. By
		Proposition~\ref{prop:global-count-by-cases}, one has
		$T_r=\{3,2,\partial\}$, and every nonconstant rational solution has
		denominator degree $r$. Since $|T_r|=3$, Corollary~\ref{cor:descartes}
		gives $\mathcal{S}\le 4$.
		
		Assume next that $x_1$ satisfies {\rm (C2)}. If $T_r=\{2,1\}$, then
		Proposition~\ref{prop:global-count-by-cases} shows that the only other realized
		degree is $r_{32}<r$. The maximal degree $r$ contributes exactly one
		solution, while the degree $r_{32}$ contributes at most $4$ solutions,
		since $|T_{r_{32}}|\le 3$. Hence $\mathcal{S}\le 1+4=5$.
		
		If $T_r=\{3,2,1\}$, then
		Proposition~\ref{prop:global-count-by-cases} shows that every nonconstant
		rational solution has denominator degree $r$. Since $|T_r|=3$,
		Corollary~\ref{cor:descartes} gives $\mathcal{S}\le 4$.
		
		Finally, assume that $x_1$ satisfies {\rm (C3)}. If $T_r=\{2,1,\partial\}$,
		then $|T_r|=3$, so the maximal degree contributes at most $4$ solutions. If
		$\mathcal{S}\le 2$, the bound is immediate. Otherwise,
		Corollary~\ref{cor:at-most-three-distinct-degrees} implies that at most two
		further realized degrees can occur, and each of them contributes at most $4$
		solutions. Hence $\mathcal{S}\le 4+2\cdot 4=12$.
		
		If $T_r=\{3,1,\partial\}$, then
		Proposition~\ref{prop:global-count-by-cases} shows that every nonconstant
		rational solution has denominator degree $r$. Since $|T_r|=3$,
		Corollary~\ref{cor:descartes} gives $\mathcal{S}\le 4$.
		
		If $T_r=\{3,2,1,\partial\}$, then
		Proposition~\ref{prop:global-count-by-cases} shows that every nonconstant
		rational solution has denominator degree $r$. Hence $\mathcal{S}$ is
		bounded by the number of nonzero real roots of $P_r$.
		
		Write $P_r(C)=C\,\widetilde P_r(C)$, where $
		\widetilde P_r(C)=\alpha_3C^{n_3-1}+\alpha_2C^{n_2-1}+\alpha_1C^{n_1-1}+r$, and $\widetilde P_r(0)=r\neq 0$.
		
		By Corollary~\ref{cor:descartes}, $\widetilde P_r$ has at most $3$ positive
		and at most $3$ negative real roots. We claim that, under {\rm (ND)}, it
		cannot have $6$ nonzero real roots, and hence it has at most $5$.
		
		Indeed, if $\widetilde P_r$ had $6$ distinct nonzero real roots, then it would have $3$ positive and $3$ negative roots.
		By Descartes' rule, this forces both $\widetilde P_r(C)$ and $\widetilde P_r(-C)$ to have $3$ sign changes, hence strict alternation of signs in the coefficient sequences of both polynomials. A short compatibility check shows that both alternations can occur simultaneously only when $n_3-1,n_2-1,n_1-1$ are all even, i.e.\ when $n_1,n_2,n_3$ are all odd. But then $\widetilde P_r(-C)=\widetilde P_r(C)$,
		so $\operatorname{Res}_C(\widetilde P_r(C),\widetilde P_r(-C))=0$, contradicting {\rm (ND3,b)}. Therefore $\widetilde P_r$ has at most $5$ nonzero real roots, and so does $P_r$. Hence $\mathcal{S}\le 5$.
	\end{proof}
	
	To conclude, the following example realizes the upper bound in Corollary~\ref{cor:global-count-by-cases-real-refined}\textup{(f)}; 
	as $n_3=6$, it also realizes the bound in Proposition~\ref{prop:global-count-by-cases}\textup{(f)}.
	
	\begin{example}\label{ex:sharp-real}
		Take $(n_1,n_2,n_3)=(2,4,6)$. The generalized Abel equation \eqref{ecu:main} with
		\[
		A_1(t)=\frac{23}{3}\,t,\qquad
		A_2(t)=-7\,t^5,\qquad
		A_3(t)=\frac{4}{3}\,t^9
		\]
		has five real rational solutions, namely,
		\[
		x_1(t)=-\frac{2}{t^2},\quad
		x_2(t)=-\frac{1}{t^2},\quad
		x_3(t)=\frac{3}{2t^2},\quad
		x_4(t)=\frac{3-\sqrt{17}}{4t^2},\quad
		x_5(t)=\frac{3+\sqrt{17}}{4t^2}.
		\]
		
		Here $(a_1,a_2,a_3)=(1,5,9)$, hence $\Gamma=\{2\}$ and
		$T_2=\{3,2,1,\partial\}$. The associated edge polynomial is
		\[
		P_2(C)=\frac{4}{3}C^6-7C^4+\frac{23}{3}C^2+2C
		=\frac{1}{3}C(C+1)(C+2)(2C-3)(2C^2-3C-1).
		\]
		Therefore all nonzero roots are real, simple, and no two of them form a
		$\pm$-pair. Moreover,
		\[
		P_2'(-2)=-\frac{182}{3},\qquad
		P_2'(-1)=\frac{20}{3},\qquad
		P_2'\!\left(\frac32\right)=-\frac{35}{4},
		\]
		and
		\[
		P_2'\!\left(\frac{3-\sqrt{17}}4\right)=\frac{51}{8}-\frac{47\sqrt{17}}{24},
		\qquad
		P_2'\!\left(\frac{3+\sqrt{17}}4\right)=\frac{51}{8}+\frac{47\sqrt{17}}{24}.
		\]
		In particular, $P_2'(C)$ is not a negative integer for any nonzero root $C$ of $P_2$. Hence the example satisfies (ND) over $\mathbb{R}$. 
		
		Since $\mathcal{S}=5$ in this example, the bound in
		Corollary~\ref{cor:global-count-by-cases-real-refined}\textup{(f)} is sharp.
	\end{example}
	
	\section{Conclusions}\label{Sect:7}
	
	For a fixed equation in the family \eqref{ecu:main}, 
	Proposition~\ref{prop:condinv} reduces the study of nonconstant rational
	solutions to solutions of the form \(x(t)=1/p(t)\), with \(p\in\Bbbk[t]\).
	The possible denominator degrees form a finite set \(\Gamma\), determined by
	the Newton diagram at infinity. Thus the problem becomes finite at the level of
	degrees and can be studied through the corresponding edge polynomials.
	
	Under the nondegeneracy hypothesis (ND), these edge polynomials control the
	possible leading coefficients and the Newton--Puiseux recursion is unique.
	Combining the degreewise bounds with the structural restrictions on the
	coexistence of rational solutions, Proposition~\ref{prop:global-count-by-cases}
	and Corollary~\ref{cor:global-count-by-cases-real-refined} give explicit
	global bounds for the total number of nonconstant rational solutions.
	
	For a fixed equation, the nondegeneracy hypothesis (ND) is effectively
	verifiable, since it only involves finitely many edge polynomials and their
	nonzero roots. Moreover, several of the structural restrictions obtained in
	Sections~\ref{Sect:4} and~\ref{Sect:5} show that some degenerate configurations
	are automatically excluded or irrelevant in concrete cases. As a result, for individual equations these restrictions provide additional
	information that may lead to sharper bounds.
	
	The same strategy can be applied, in principle, to generalized Abel equations
	\[
	x'(t)=\sum_{i=1}^n A_i(t)x^i,
	\]
	provided that the lowest nonzero exponent is greater than one. In that case the
	numerator reduction remains valid, the admissible denominator degrees are again
	determined by the slopes of the Newton diagram, and the corresponding edge
	polynomials give degreewise bounds under a suitable nondegeneracy hypothesis.
	The cases not covered by such a hypothesis, as well as equations with a
	nonzero linear term, lead to additional case-by-case analyses and are natural
	directions for future work.

\end{document}